\date{} 
\definecolor{darkgreen}{RGB}{0, 102, 51}
\definecolor{darkblue}{RGB}{0, 0, 153}
\setlist[enumerate]{leftmargin=.5in}
\setlist[itemize]{leftmargin=.5in}
\newtheorem{Lemma}{Lemma}
\numberwithin{Lemma}{section}
\newtheorem{Theorem}{Theorem}
\DeclareMathOperator{\im}{\textrm{im}} 
\newcommand{\field}{\mathbb}
\newcommand {\sheaf}{\mathscr}
\newcommand {\cosheaf}{\mathscr}
\newcommand{\FaceRelation}{\unlhd} 
\newcommand{\CellComplex}{} 
\newcommand{\total}{\textrm{Tot}}
\newcommand{\barcode}{\textit{barcode}} 
\newcommand{\incv}{\iota}
\newcommand{\ince}{\kappa}
\newcounter{example}[section]
\newenvironment{example}[1][]
{\refstepcounter{example}\par\medskip
   \textbf{Example~\theexample. #1} \rmfamily}{\medskip}
\newcommand{\wmodule}{\mathbb{O}} 
\newcommand{\wmap}{\omega} 
\title{Persistence by Parts: Multiscale Feature Detection via Distributed Persistent Homology}
\author{Hee Rhang Yoon\thanks{Department of Electrical \& Systems Engineering, University of Pennsylvania, Philadelphia, PA   (yoonhee@seas.upenn.edu)} \hspace{0.05cm} and Robert W. Ghrist\thanks{Department of Mathematics and Electrical \& Systems Engineering, University of Pennsylvania, Philadelphia, PA (ghrist@math.upenn.edu)}}
\begin{document}

\maketitle

\begin{abstract}
  A method is presented for the distributed computation of persistent homology, based on an extension of the generalized Mayer-Vietoris principle to filtered spaces. Cellular cosheaves and spectral sequences are used to compute global persistent homology based on local computations indexed by a scalar field. These techniques permit computation localized not merely by geography, but by other features of data points, such as density. As an example of the latter, the construction is used in the multi-scale analysis of point clouds to detect features of varying sizes that are overlooked by standard persistent homology.
\end{abstract}

\section{Introduction}
\label{intro}

\subsection{Cosheaves and computational persistence}
\label{sec:coco}

Persistence has emerged as a central principle in Topological Data Analysis (TDA). As its core, persistence exploits the functoriality of homology to extract robust features from point cloud data. Given a filtered cell complex arising from such data, its homology is a persistence module -- a sequence of vector spaces and linear transformations. The representation theory of such objects leads to an unambiguous decomposition into indecomposables (persistent homology classes) \cite{CarlssonTopology09,Carlssonshape13,CSZigzag10} which, thanks to the Stability Theorem for persistent homology \cite{CEHStability07,CSG+structure12}, is robust with respect to perturbations of the original data points. For details, see \S\ref{sec:PH}.

There is no inherent obstruction to computing homology from local data: Mayer-Vietoris and spectral sequences lead the way. The story is more complicated for persistent homology, due to the algebra of persistence modules. The approach of this paper is to use theory of cellular cosheaves to store and relate local homology information over the complex. This theory, like the algebraically dual theory of cellular sheaves \cite{CurrySheaves14}, is ideal for local-to-global operations.

The main result of this paper is an application of spectral sequences and the Mayer-Vietoris principle to compute persistent homology by breaking a filtered complex into pieces based on a scalar field on the point cloud. This has utility beyond the obvious idea of breaking up a complex into parts based on geographic proximity.

In its typical interpretation, homology classes which persist over large parameter intervals are the statistically significant features -- those which are not artefacts of noisy sampling. For \v{C}ech complexes of manifolds sampled with sufficiently high density, a single (practically unobtainable)  homology computation yields truth \cite{NSWFinding08}. For the (more realistic) case of a filtered Vietoris-Rips complex, persistent homology is a good first approximation to truth. Questions of a more epistemological bent ({\em ``Are these really the important features?''}) are evident. Recent work of Berry and Sauer argues that persistent homology can erroneously label important small features as non-persistent, and they propose a {\em Continuous $k$-Nearest Neighbors} graph to estimate the geometry of a discretely sampled manifold \cite{BSConsistent19}. They prove that this method captures multiscale connectivity data and conjecture that it works in higher homologies.

As an application of our results, we show how to use a local density as the distribution parameter in order to retrieve persistent homology classes that are weighted according to the density of sampling. The idea is that when faced with geometrically small but tightly-sampled homology classes which may be of significance, one can compute persistent homology using distance as the (usual) parameter, and sampling density as the distribution field: see \S\ref{sec:multi}. The oft-expressed desire of doing multiparameter persistence on both distance and density \cite{CarlssonTopology09,GhristBarcodes08} is full of algebraic challenges \cite{CZtheory09,CSZComputing10}; in a sense, this paper gives a novel approach by separating out one parameter as a scalar field for distributed computation.

\subsection{Related and supporting work}

Algorithms for computing persistent homology now comprise a rich and intricate literature. The original algorithm for computing persistent homology \cite{ELZTopological02,EHComputational10} computes persistence pairs by reducing the boundary operator. Since then, variations of the original algorithm have been developed to improve computation \cite{OPT+roadmap}. In particular, parallelized and distributed algorithms have been proposed in order to improve memory usage and computation time. The spectral sequence algorithm from \cite{EHComputational10} reduces blocks of matrices at each phase, which results in persistence pairs of particular lengths. In \cite{BKRClear14}, the authors incorporate optimization techniques and construct a distributed algorithm of the spectral sequence algorithm in both shared and distributed memory.

The above mentioned algorithms distribute data with respect to ranges of filtration values. In \cite{LMParallel15}, the authors provide a distributed computation algorithm in which data is distributed with respect to spatial decomposition of the domain. They build a Mayer-Vietoris blowup complex, and its boundary matrix is reduced by reducing submatrices in parallel. Our work shares a similar philosophy, as we distribute data according to spatial decomposition of the domain and we operate on the foundation of Mayer-Vietoris principle. Instead of using the geometric construction of Mayer-Vietoris blowup complex, we use the algebraic construction of cosheaf homology to combine local information. Our approach adapts the use of cosheaf homology to compute homology from subspaces \cite{CGNDiscrete13}, with cosheaf morphisms and spectral sequences to take the filtration into account.

This work has its origins in the Ph.D. dissertation of the first author \cite{YoonCellular18}. After preparing this article, the preprint of Casas appeared \cite{CasasDistributing19}, which, influenced by \cite{YoonCellular18}, builds on and extends the results. In particular, Algorithm 2 of \cite{CasasDistributing19} recapitulates \S 4.2.3 of \cite{YoonCellular18}. Casas greatly extends the results of that thesis and this paper by not limiting the nerve of the distribution cover to 1-d; however, in the restricted case considered here, Casas' diagram chase is exactly that of \cite{YoonCellular18} and the present work.

\subsection{Problem statement and contributions}
We address the following question.

\smallskip
\textit{Given a point cloud $P$ (a finite subset of Euclidean $\mathbb{R}^m$), compute the persistence module of $P$ from local persistence modules subordinate to a cover of $P$}
\smallskip

Let $\mathbb{V}$ denote the persistence module obtained from $P$ --- a finite sequence of vector spaces and linear maps that encodes topological information about Vietoris-Rips filtration associated to $P$. We use cellular cosheaf homology to assemble the relevant information gathered from subsets of $P$ subordinate to a cover. Morphisms of cellular cosheaves then allow us to incorporate persistence. We use spectral sequences to discover a hidden map among cosheaf homologies. Our main result, as stated in the following theorem, is a distributed construction of a persistence module $\mathbb{V}_{\Psi}$ that is isomorphic to the persistence module $\mathbb{V}$ of interest.

\begin{Theorem}
The local $\mathbb{V}_{\Psi}$ and global $\mathbb{V}$ persistence modules are isomorphic.
\end{Theorem}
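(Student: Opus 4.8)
The plan is to establish the isomorphism one scale at a time and then upgrade it to an isomorphism of persistence modules by naturality in the scale parameter. Fix a scale $t$ and write $\mathrm{VR}_t(P)$ for the Vietoris--Rips complex of $P$ at that scale. The cover of $P$ prescribed by $\Psi$ induces a cover of $\mathrm{VR}_t(P)$ by the full subcomplexes $\mathrm{VR}_t(P_\sigma)$, indexed by the simplices $\sigma$ of the (one-dimensional) nerve $N$ of $\Psi$, where $P_\sigma$ denotes the corresponding cover piece or pairwise intersection of cover pieces. Because each $\mathrm{VR}_t(P_\sigma)$ is a \emph{full} subcomplex, a finite family of them has nonempty intersection precisely when the underlying point sets do; hence the nerve of this cover of $\mathrm{VR}_t(P)$ is again $N$, independently of $t$. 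First I would organize the local homologies $H_q(\mathrm{VR}_t(P_\sigma))$, together with the inclusion-induced maps among them, into cellular cosheaves $\widehat{\mathcal F}^t_q$ on $N$, and invoke the local-to-global principle for cosheaf homology of \cite{CGNDiscrete13}: the cosheaf homologies $H_p(N;\widehat{\mathcal F}^t_q)$ are the $E^2$-page of a Mayer--Vietoris spectral sequence converging to $H_{p+q}(\mathrm{VR}_t(P))$.

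Next I would use that $N$ is one-dimensional, so that the only nonzero columns of the spectral sequence are $p=0$ and $p=1$: the sole differential is $d^1_q\colon E^1_{1,q}\to E^1_{0,q}$ (the alternating sum of inclusion-induced maps on local homologies), the spectral sequence degenerates at $E^2=E^\infty$, and for every homological degree $n$ there is a short exact sequence
\[
0 \longrightarrow \coker d^1_{n} \longrightarrow H_n(\mathrm{VR}_t(P)) \longrightarrow \ker d^1_{n-1} \longrightarrow 0,
\]
whose outer terms are the cosheaf homologies $H_0(N;\widehat{\mathcal F}^t_{n})$ and $H_1(N;\widehat{\mathcal F}^t_{n-1})$, both built from the local persistence modules alone. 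This short exact sequence is exactly the blueprint for $\mathbb{V}_\Psi$; over a field it splits, and one thereby obtains, at each scale $t$ and in each homological degree, a vector-space isomorphism $(\mathbb{V}_\Psi)_t\cong H_n(\mathrm{VR}_t(P))=\mathbb{V}_t$. The residual content of the spectral sequence --- the ``hidden map'' among cosheaf homologies --- is precisely the data, invisible to the two split summands individually, that records how the structure maps of $\mathbb{V}$ interleave them.

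It remains to make all of this natural in $t$, which I expect to be the crux of the argument. For $s\le t$ the inclusion $\mathrm{VR}_s(P)\hookrightarrow\mathrm{VR}_t(P)$ restricts to inclusions on every local complex $\mathrm{VR}_s(P_\sigma)$, hence induces a morphism of cosheaves $\widehat{\mathcal F}^s_q\to\widehat{\mathcal F}^t_q$, a morphism of the underlying Mayer--Vietoris double complexes, and therefore a morphism of spectral sequences compatible with the abutment filtration on $H_\ast(\mathrm{VR}_\bullet(P))$. Tracing this through, the structure map $\mathbb{V}_s\to\mathbb{V}_t$ becomes block upper-triangular with respect to the two-step filtration above: its diagonal blocks are the internal persistence maps of the cosheaf homologies $H_0(N;\widehat{\mathcal F}^\bullet_{n})$ and $H_1(N;\widehat{\mathcal F}^\bullet_{n-1})$, and its off-diagonal block is the hidden map --- which is exactly how the structure maps of $\mathbb{V}_\Psi$ are defined. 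Consequently the pointwise isomorphisms of the previous paragraph assemble into an isomorphism $\mathbb{V}_\Psi\cong\mathbb{V}$ of persistence modules. The delicate point is precisely that off-diagonal block: the short exact sequence splits at each scale but not canonically, so one must check that the hidden map is transported correctly by the filtration morphisms --- equivalently, that the whole family of extension problems is solved compatibly across scales, and that the resulting persistence module is independent, up to isomorphism, of the chosen splittings. The only further technicality is the uniform-in-$t$ bookkeeping already noted, namely that covering by full subcomplexes keeps the nerve equal to $N$ at every scale, which holds by construction of the cover.
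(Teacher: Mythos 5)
Your overall strategy coincides with the paper's: use a two-column Mayer--Vietoris-type spectral sequence over the one-dimensional nerve, note it degenerates at $E^2$, split the resulting two-step filtration of the total-complex homology, and propagate naturality in the scale parameter through morphisms of double complexes so that the off-diagonal block of the structure map of $\mathbb{V}$ is identified with the hidden map $\psi^i$. The paper packages this via the intermediate persistence module $\mathbb{V}_{\total}$, proving $\mathbb{V}\cong\mathbb{V}_{\total}$ (Lemma~\ref{Theorem2}) and $\mathbb{V}_\Psi\cong\mathbb{V}_{\total}$ (Lemma~\ref{Theorem1}); your filtration-and-splitting picture is the same argument in slightly different language.

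There is, however, a genuine gap in your setup. You assert that the subcomplexes $\mathscr{R}^{\epsilon}_\sigma$ form a cover of $\mathscr{R}^{\epsilon}$ at \emph{every} scale, and relegate this to a bookkeeping remark (``holds by construction of the cover''). It does not: at large $\epsilon$ a simplex of $\mathscr{R}^{\epsilon}$ can have one vertex in $U\setminus W$ and another in $W\setminus U$, so it lies in no $\mathscr{R}^{\epsilon}_\sigma$ even though all its vertices are covered. Your remark about full subcomplexes shows the \emph{nerve} of the Rips system is $N_{\mathscr{V}}$, but not that the Rips system is a cover, and without the covering condition the spectral sequence of \cite{CGNDiscrete13} does not converge to $H_\ast(\mathscr{R}^{\epsilon})$, so the scale-wise isomorphism fails. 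The paper's Lemma~\ref{DiscreteCGNBound} is precisely the existence of a threshold $\epsilon_\ast$ below which the covering holds, with a nontrivial proof in Appendix~\ref{A:DiscreteCGNBound}, and $\mathbb{V}_\Psi$ is defined only on that truncated parameter range. Separately, you correctly flag the naturality of the splitting as the crux, but you leave it unresolved: the splittings are not canonical, and one must choose them compatibly across scales for the off-diagonal map to be well-defined on cosheaf homology. That is exactly the content of the inductive construction of bases $\mathscr{B}^i$ and lifts $\Gamma^i$ in \S\ref{Construction}, which is the bulk of the paper's work and which your proposal does not supply.
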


We argue that this distributed computation can be used to filter and annotate persistent homology based on meta-data associated to the point-cloud, using such to build a cover. We illustrate this idea in the case where the meta-data is a sampling density estimate, yielding a method for computing multiscale persistent homology, identifying significant features that are overlooked by standard persistent homology methods.

Section \ref{sec:background} contains a summary of persistent homology and cellular sheaf theory. We review a distributed computation method for homology using Leray cellular cosheaves in \S\ref{sec:DistributedHomology}. In \S\ref{sec:DistributedPH}, we construct a general distributed persistent homology computation mechanism subordinate to a cover with at most pairwise overlaps. Finally, in \S\ref{sec:multi}, we indicate the utility of this distributed computation in multiscale persistent homology, using our methods to identify persistent homology classes relative to a sampling density.

\section{Background}
\label{sec:background}

Throughout this paper, we assume that every homology is computed with coefficients in a field $\field{K}$.

\subsection{Persistent Homology}
\label{sec:PH}
Given a point cloud, one can interrogate its global structure via persistent homology. Let $P$ be a finite collection of points in Euclidean $\mathbb{R}^m$. The \textbf{Vietoris-Rips complex} $\mathscr{R}^{\epsilon}$ is the simplicial complex whose $k$-simplices correspond to $(k+1)$-tuple of points from $P$ that have pairwise distance $\leq \epsilon$. For brevity, we use the term ``Rips complex" to refer to the Vietoris-Rips complex.

Assume that $( \mathscr{R}^{i} )_{i=1}^N$ is a sequence of Rips complexes over $P$ for increasing parameter values $( \epsilon_i ) _{i=1}^N$, with inclusion maps between each pair of Rips complexes
\[\mathscr{R}^1 \hookrightarrow \mathscr{R}^{2} \hookrightarrow \dots \hookrightarrow \mathscr{R}^{N}.\]
By applying the homology functor, one obtains the following sequence of vector spaces
\begin{equation}
\label{OriginalPM}
\mathbb{V}: H_{\bullet}(\mathscr{R}^{1}) \to H_{\bullet}(\mathscr{R}^{2}) \to \dots \to H_{\bullet}(\mathscr{R}^{N}).
\end{equation}
The above sequence is an instance of a \textbf{persistence module}. For the purpose of this paper, it suffices to consider a persistence module as a finite sequence of vector spaces and linear maps between them.

A \textbf{morphism of persistence modules} $\alpha:\mathbb{V} \to \mathbb{W}$ is a collection of linear maps $\alpha_i : V_i \to W_i$ such that the following diagram commutes.
\[
\begin{tikzcd}
V_1  \arrow[d, "\alpha_1"] \arrow[r]& V_2 \arrow[d, "\alpha_2"] \arrow[r]& \cdots \arrow[r] & V_n \arrow[d,"\alpha_n"] \\
W_1 \arrow[r] & W_2 \arrow[r] & \cdots \arrow[r] & W_n
\end{tikzcd}
\]
When all $\alpha_i$'s are isomorphisms, then $\alpha$ is an isomorphism of persistence modules.

By the Structure Theorem \cite{ZCComputing05}, the persistence module from Equation (\ref{OriginalPM}) decomposes uniquely as
\[ \mathbb{V} \cong \bigoplus\limits_{l=1}^N \mathbb{I}(b_l, d_l),\]
where each $\mathbb{I}(b,d)$ is a simpler persistence module of the form
\[ \mathbb{I}(b,d): 0 \to \dots 0 \to \field{K} \xrightarrow{1} \field{K} \xrightarrow{1}  \dots \xrightarrow{1} \field{K} \rightarrow 0 \to \dots \to 0.\]
The $b$ and $d$ each indicates the first and last index of $\field{K}$. Each $\mathbb{I}(b,d)$ represents a homological feature with birth time $b$ and death time $d$. One can visualize such birth and death times of $\mathbb{I}(b,d)$ using a \textbf{barcode}. Given a persistence module $\mathbb{V}$, a barcode diagram, $\barcode(\mathbb{V})$, is a collection of bars that correspond to the intervals $(b,d)$ obtained from the decomposition of $\mathbb{V}$. In simple settings, long bars of $\barcode(V)$ capture significant homological features; shorter bars may be due to noise.

\subsection{Cellular Cosheaves}
\label{sec:CellularCosheaves}

A cellular cosheaf is a certain assignment of algebraic structure to a cell complex \cite{CurrySheaves14}.  Given a cell complex $\CellComplex{X}$, there is a face poset category whose objects are the cells of $\CellComplex{X}$ and whose morphisms $\tau\to\sigma$ correspond to the face relation $\tau \FaceRelation \sigma$ (that is, $\tau \subset \bar{\sigma}$).

Following \cite{ShepardCellular85,CurrySheaves14}, a \textbf{cellular cosheaf} $\sheaf{F}$ on a cell complex $\CellComplex{X}$ with values in category $D$ is a contravariant functor from the associated face poset category to $D$. In other words, $\sheaf{F}$ assigns to each cell $\sigma$ of $\CellComplex{X}$ an object $\sheaf{F}(\sigma)$ in $D$, and to each face relation $\tau \FaceRelation \sigma$ a morphism $\sheaf{F}(\tau \FaceRelation \sigma):\sheaf{F}(\sigma) \to \sheaf{F}(\tau)$ such that
\begin{itemize}
\item $\sheaf{F}(\tau \FaceRelation \tau) : \sheaf{F}(\tau) \to \sheaf{F}(\tau)$ is the identity, and
\item if $\rho \FaceRelation \tau \FaceRelation \sigma$, then $\sheaf{F}( \rho \FaceRelation \sigma ) =\sheaf{F}(\rho \FaceRelation \tau) \circ \sheaf{F}(\tau \FaceRelation \sigma).$
\end{itemize}

A cellular cosheaf $\cosheaf{F}$ over a compact cell complex $X$ has a well-defined homology associated to its chain complex
\begin{equation}
\label{chaincomplex}
C_{\bullet} \cosheaf{F} : \quad \cdots \xrightarrow{\partial_3} \bigoplus \limits_{\textrm{dim }\sigma=2 } \cosheaf{F}(\sigma) \xrightarrow{\partial_2}  \bigoplus \limits_{\textrm{dim } \sigma =1}\cosheaf{F}(\sigma) \xrightarrow{\partial_1} \bigoplus \limits_{\textrm{dim } \sigma =0} \cosheaf{F}(\sigma) \xrightarrow{\partial_0} 0,
\end{equation}
where $C_n(\CellComplex{X}, \cosheaf{F})$ is the direct sum of the data over the $n$-cells of $X$. The boundary map $\partial_n : C_n(\CellComplex{X}, \cosheaf{F}) \to C_{n-1}(\CellComplex{X}, \cosheaf{F})$ is defined in the familiar manner as
\[
 \partial_n = \sum\limits_{\tau \FaceRelation \sigma} [\tau : \sigma] \cosheaf{F}(\tau \unlhd \sigma),
\]
where $[\tau:\sigma]$ is the incidence number.

The $n^{\textrm{th}}$ \textbf{cosheaf homology} of $\cosheaf{F}$ is the homology of this chain complex $H_n(C_{\bullet} \cosheaf{F}) = \ker \partial_n / \im \partial_{n+1}$.

Cosheaf homologies reflect global structures from locally encoded data. When comparing local data, cosheaf morphisms allow one to extract global changes from the local changes. Following \cite{BredonSheaf97},
a \textbf{cosheaf morphism} $\phi : \sheaf{F} \to \sheaf{F}'$ between cosheaves $\sheaf{F}$ and $\sheaf{F}'$ on $\CellComplex{X}$ is a collection of morphisms $\phi|_{\sigma} :\sheaf{F}(\sigma) \to \sheaf{F'}(\sigma)$ indexed by cells $\sigma \in X$ such that the following diagram commutes
\[
\begin{tikzcd}
\sheaf{F}(\sigma) \arrow[d,"\sheaf{F}(\tau \unlhd \sigma)" , swap] \arrow[r,"\phi|_{\sigma}"] & \sheaf{F}'(\sigma) \arrow[d,"\sheaf{F}'( \tau \unlhd \sigma)" ]\\
\sheaf{F}(\tau) \arrow[r,"\phi|_{\tau}"] & \sheaf{F}'(\tau)
\end{tikzcd}
\]
for every face relation $\tau \FaceRelation \sigma$.

Thus, a cosheaf morphism $\phi : \sheaf{F} \to \sheaf{F}'$ is a natural transformation from the functor $\sheaf{F}$ to $\sheaf{F}'$. As such,
any cosheaf morphism $\phi : \cosheaf{F} \to \cosheaf{F}'$ induces morphisms on homology:
\[
	H_n(\phi) : H_n(C_{\bullet}\cosheaf{F}) \to H_n(C_{\bullet} \cosheaf{F}') .
\]

In the special case of a cell complex $\CellComplex{X}$ that is homeomorphic to a closed interval, a cellular cosheaf $\cosheaf{F}$ on $\CellComplex{X}$ can be interpreted as a generalized persistence module, or \textbf{zigzag module} \cite{CSZigzag10}, of the form
\[
	V_1 \leftarrow V_2 \rightarrow V_3 \leftarrow \cdots \rightarrow V_m.
\]
By Gabriel's Theorem \cite{GabrielUnzerlegbare72}, such a cosheaf $\cosheaf{F}$ can be decomposed into a direct sum of \textbf{indecomposable cosheaves}, of the form
\[
	\mathscr{I} : 0 \leftarrow \cdots 0 \leftrightarrow \field{K} \leftrightarrow \cdots \leftrightarrow \field{K} \leftrightarrow 0 \cdots \rightarrow 0 ,
\]
as in Figure~\ref{fig:DecomposedCosheaf}.
\begin{figure}[h!]
\centering
\includegraphics[scale=0.25]{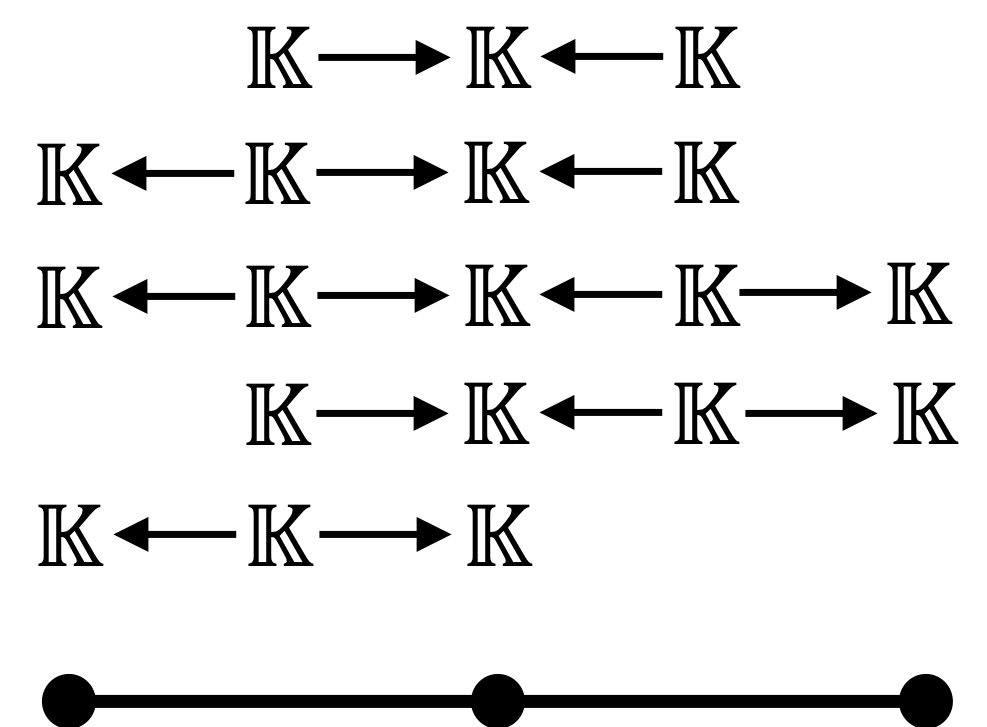}
\caption{Direct sum of indecomposable cosheaves}
\label{fig:DecomposedCosheaf}
\end{figure}

Note that there are four types of indecomposable cosheaves possible: cosheaves $\sheaf{I}_{[-]}$ whose left- and rightmost supports are 0-cells, cosheaves $\sheaf{I}_{]-[}$ whose left and right most supports are 1-cells, cosheaves $\sheaf{I}_{[-[}$ with leftmost support a 0-cell and rightmost support a 1-cell, and the reversed $\sheaf{I}_{]-]}$.

\begin{Lemma}[\cite{CurrySheaves14}]
\label{IndecomposableCosheaf}
The indecomposable cosheaves $\cosheaf{I}$ satisfy
\[
	H_0(C_{\bullet}\cosheaf{I}_{[-]}) = \field{K}, \quad H_1(C_{\bullet}\cosheaf{I}_{]-[})=\field{K}, \quad H_i(C_{\bullet}\cosheaf{I}_{[-[})=H_i(C_{\bullet}\cosheaf{I}_{]-]})=0.
\]
\end{Lemma}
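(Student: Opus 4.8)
The plan is to push everything down to the cellular chain complex \eqref{chaincomplex}. Since a complex $X$ homeomorphic to a closed interval has cells only in dimensions $0$ and $1$, the chain complex of any cosheaf $\cosheaf F$ on $X$ is $0 \to C_1(X,\cosheaf F) \xrightarrow{\partial_1} C_0(X,\cosheaf F) \to 0$, so $H_i(C_\bullet\cosheaf F)=0$ for $i \ge 2$ is automatic and we need only analyse $\partial_1$, with $H_1 = \ker\partial_1$ and $H_0 = \coker\partial_1$. Before doing so I would normalise: enumerate the cells of $X$ along the interval as $v_0, e_1, v_1, \dots, e_m, v_m$, with $v_{k-1}, v_k$ the faces of $e_k$. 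By Gabriel's Theorem an indecomposable cosheaf $\cosheaf I$ is isomorphic to an interval module, so after a change of basis every object of $\cosheaf I$ over its (contiguous) support is $\field K$ and every structure map $\cosheaf I(\tau \FaceRelation \sigma)$ internal to the support is $\mathrm{id}_{\field K}$, while everything outside the support is $0$; since cosheaf homology is invariant under isomorphism of cosheaves, this costs nothing.

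The key step is to recognise $C_\bullet\cosheaf I$ as the relative cellular chain complex $C_\bullet(\Sigma,\partial\Sigma)$, where $\Sigma \subseteq X$ is the smallest subcomplex (a closed subinterval) containing the support of $\cosheaf I$, and $\partial\Sigma$ is the set of those endpoints of $\Sigma$ that fail to lie in the support. Indeed the cells contributing to $C_\bullet\cosheaf I$ are exactly those of $\Sigma \setminus \partial\Sigma$, each with a copy of $\field K$; and once the structure maps are identities and the contributions of cells outside the support drop out, the formula $\partial_1 = \sum_{\tau \FaceRelation \sigma}[\tau:\sigma]\,\cosheaf I(\tau \FaceRelation \sigma)$ becomes the ordinary cellular boundary of $\Sigma$ read modulo $\partial\Sigma$. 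Consequently $H_n(C_\bullet\cosheaf I) \cong H_n(\Sigma, \partial\Sigma) \cong \widetilde H_n(\Sigma/\partial\Sigma)$, the last isomorphism because $(\Sigma,\partial\Sigma)$ is a good pair.

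It then remains to read off the four cases from the homotopy type of $\Sigma/\partial\Sigma$. If the extreme cells of the support are both $0$-cells ($\cosheaf I_{[-]}$), then $\Sigma$ is the support, $\partial\Sigma = \varnothing$, and $\Sigma$ is a contractible path (or a point), so $H_0 = \field K$ and $H_i = 0$ for $i > 0$. If both extreme cells are $1$-cells ($\cosheaf I_{]-[}$), then $\Sigma$ gains one extra endpoint at each end, $\partial\Sigma$ consists of those two points, and $\Sigma/\partial\Sigma \simeq S^1$, whence $H_1 = \field K$ and $H_i = 0$ otherwise. If exactly one extreme cell is a $1$-cell ($\cosheaf I_{[-[}$ or $\cosheaf I_{]-]}$), then $\Sigma$ gains a single extra endpoint, $\partial\Sigma$ is that one point, $\Sigma$ deformation retracts onto it, and hence $H_n(\Sigma,\partial\Sigma) = \widetilde H_n(\Sigma/\partial\Sigma) = 0$ for every $n$. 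These are precisely the three displayed statements.

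I expect the only genuine work to be the verification that $C_\bullet\cosheaf I$ coincides on the nose with $C_\bullet(\Sigma,\partial\Sigma)$: keeping straight which endpoints of $\Sigma$ do or do not belong to the support, and checking that the incidence signs transfer correctly, is elementary but is where the bookkeeping lives. If one prefers to avoid the topological language, the same cases can be dispatched by pure linear algebra on the matrix of $\partial_1$: in the two mixed cases it is square and (in a suitable ordering) triangular with $\pm 1$ on the diagonal, hence invertible, so $H_0 = H_1 = 0$; in the $\cosheaf I_{]-[}$ case it has one more column than rows and full row rank, so $\dim\ker\partial_1 = 1$ and $\coker\partial_1 = 0$; and in the $\cosheaf I_{[-]}$ case it is the boundary matrix of a connected tree, with $\ker\partial_1 = 0$ and $\dim\coker\partial_1 = 1$.
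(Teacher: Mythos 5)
Your proposal is correct and complete. The paper itself does not prove this lemma; it cites \cite{CurrySheaves14}, so there is no internal proof to compare against. Your identification of $C_\bullet\cosheaf{I}$ with the relative cellular chain complex $C_\bullet(\Sigma,\partial\Sigma)$ is sound: once the interval module is normalised to have copies of $\field{K}$ with identity structure maps over its contiguous support, the terms $\cosheaf{I}(\tau\FaceRelation\sigma)$ with $\tau$ outside the support vanish, so the cosheaf boundary is exactly the cellular boundary of $\Sigma$ read modulo the missing endpoints. The four cases ($\Sigma$ a contractible path with $\partial\Sigma=\varnothing$; $\Sigma/\partial\Sigma\simeq S^1$; $\Sigma$ retracting onto the single point of $\partial\Sigma$) then give precisely the claimed homology. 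The alternative linear-algebra argument at the end is equally valid and closer in spirit to how one would verify this computationally: the key observations --- that the boundary matrix of $\cosheaf{I}_{[-[}$ or $\cosheaf{I}_{]-]}$ is square and triangular with units on the diagonal, that $\cosheaf{I}_{]-[}$ has one more edge than vertex with full row rank, and that $\cosheaf{I}_{[-]}$ is the boundary of a tree --- are all correct and routine to check. Either route suffices; no gaps.
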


Thus, if cosheaf $\cosheaf{F}$ can be decomposed as $\cosheaf{F} \cong \oplus \cosheaf{I}$, then $H_i(C_{\bullet} \cosheaf{F}) \cong \oplus H_i(C_{\bullet} \cosheaf{I})$. Thus, $\dim H_0(C_{\bullet}\sheaf{F})$ counts the number of indecomposable cosheaves $\cosheaf{I}_{[-]}$ in the decomposition, whereas $\dim H_1(C_{\bullet}\sheaf{F})$ counts the number of indecomposable cosheaves $\cosheaf{I}_{]-[}$ in the decomposition. Lemma \ref{IndecomposableCosheaf} is the key to enriching the persistent homology barcodes in \S\ref{sec:multi}.

\section{Distributed computation of homology}
\label{sec:DistributedHomology}

Our goal is to compute the persistence module
\[
	\mathbb{V}: H_{\bullet} (\mathscr{R}^1) \to H_{\bullet} (\mathscr{R}^2) \to \cdots \to H_{\bullet} (\mathscr{R}^N)
\]
in a distributed manner. To commence, we recall the local nature of homology computations,
drawing on the classic results of Mayer-Vietoris, Leray \cite{BredonSheaf97}, and Serre \cite{McClearyUsers01}, in language of sheaf cohomology \cite{CGNDiscrete13}. The following adapts the classic constructions from \cite{CGNDiscrete13} to analyzing point cloud data.

For $P$ a point cloud, denote by $\mathscr{R}^{\epsilon}$ the Rips complex built on $P$ for parameter $\epsilon$.
Let $\mathscr{V}$ be a finite cover of $P$ and $N_{\mathscr{V}}$ the resulting nerve complex.\footnote{The simplicial complex that indexes intersections of cover elements.}
To each simplex $\sigma \in N_{\mathscr{V}}$ is then associated $\mathscr{R}_{\sigma}^{\epsilon}$, the Rips complex built on the subset of points of $P$ indexed by $\sigma$ at proximity parameter $\epsilon$.

We will refer to the collection $\coprod\limits_{\dim \sigma=n } \mathscr{R}^{\epsilon}_{\sigma}$ as the ``Rips complexes over the $n$-simplices of $N_{\mathscr{V}}$", referring to the entire collection $\coprod\limits_{\sigma \in N_{\mathscr{V}}} \mathscr{R}^{\epsilon}_{\sigma}$ as the \textbf{Rips system} of the nerve.

Define a cosheaf $\cosheaf{F}^{\epsilon}_n$ on $N_{\mathscr{V}}$ as the following. For each $\sigma \in N_{\mathscr{V}}$, let $\cosheaf{F}_{n}^{\epsilon} (\sigma)= H_n( \mathscr{R}^{\epsilon}_{\sigma} )$. For $\tau \unlhd \sigma$, let $\cosheaf{F}_n^{\epsilon}(\tau \unlhd \sigma)$ be the map induced by inclusion $\mathscr{R}^{\epsilon}_{{\sigma}} \hookrightarrow \mathscr{R}^{\epsilon}_{\tau}$.

\begin{restatable}{Lemma}{discreteCGNlemma}
\label{DiscreteCGNBound}
Let $P$ be a point cloud with finite cover $\mathscr{V}$ having one-dimensional nerve $N_{\mathscr{V}}$. There exists a constant $\epsilon_*$ such that
\begin{equation}
\label{DistributedComputationK}
H_n(\mathscr{R}^{\epsilon}) \cong H_0(C_{\bullet} \cosheaf{F}^{\epsilon}_n) \oplus H_1(C_{\bullet} \cosheaf{F}^{\epsilon}_{n-1})
\end{equation}
for every $0<\epsilon<\epsilon_*$.
\end{restatable}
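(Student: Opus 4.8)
The plan is to obtain \eqref{DistributedComputationK} from the generalized Mayer--Vietoris spectral sequence attached to the cover of $\mathscr{R}^{\epsilon}$ by the local Rips complexes, via a double-complex argument running in close parallel to the sheaf-theoretic computation of \cite{CGNDiscrete13}; the essential point is that a one-dimensional nerve forces the spectral sequence to degenerate at $E^2$. The only ingredient not already present in the abstract setting of \cite{CGNDiscrete13} is the threshold $\epsilon_*$, and it is needed because for large $\epsilon$ the local complexes simply do not cover $\mathscr{R}^{\epsilon}$. Since $P$ is finite and every simplex of $\mathscr{R}^{\epsilon}$ is a subset of $P$ of diameter at most $\epsilon$, I would set
\[
\epsilon_* \;:=\; \min\bigl\{\,\mathrm{diam}(S)\ :\ S\subseteq P,\ |S|\ge 2,\ S\not\subseteq V\ \text{for every}\ V\in\mathscr{V}\,\bigr\}
\]
(with $\epsilon_*=\infty$ if that set is empty), a minimum of finitely many strictly positive numbers, so $\epsilon_*>0$. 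For $0<\epsilon<\epsilon_*$ each simplex $s$ of $\mathscr{R}^{\epsilon}$ has $\mathrm{diam}(s)<\epsilon_*$, hence lies in some cover element $V_i$; thus $\{\mathscr{R}^{\epsilon}_{\{i\}}\}$ covers $\mathscr{R}^{\epsilon}$, and since $\mathscr{R}^{\epsilon}_{\sigma}\cap\mathscr{R}^{\epsilon}_{\tau}=\mathscr{R}^{\epsilon}_{\sigma\cup\tau}$ this cover has nerve $N_{\mathscr{V}}$, with $\cosheaf{F}^{\epsilon}_{\bullet}$ the associated Mayer--Vietoris cosheaf.

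Next I would build the first-quadrant double complex $D_{p,q}=\bigoplus_{\dim\sigma=p}C_q(\mathscr{R}^{\epsilon}_{\sigma})$, with vertical differential the simplicial boundary of the local Rips complexes and horizontal differential $D_{p,q}\to D_{p-1,q}$ equal to $\sum_{\tau\FaceRelation\sigma}[\tau:\sigma]$ times the chain map induced by the inclusion $\mathscr{R}^{\epsilon}_{\sigma}\hookrightarrow\mathscr{R}^{\epsilon}_{\tau}$; because $N_{\mathscr{V}}$ is $1$-dimensional this complex occupies only the columns $p=0,1$. Taking horizontal homology first, the complex $(D_{\bullet,q},\delta)$ splits as a direct sum over the $q$-simplices $s$ of $\mathscr{R}^{\epsilon}$, the summand indexed by $s$ being the simplicial chain complex of the full simplex on $I_s:=\{i:s\subseteq V_i\}$. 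By the previous paragraph $I_s\ne\emptyset$, and $|I_s|\le 2$ because a vertex of $s$ lying in three cover elements would yield a $2$-simplex of $N_{\mathscr{V}}$; so each summand is the chain complex of a point or an edge, acyclic with $H_0=\field{K}$ and $H_p=0$ for $p\ge 1$. Hence the $E^1$-page of this spectral sequence is the complex $C_{\bullet}(\mathscr{R}^{\epsilon})$ sitting in column $0$, whence $H_n(\total D)\cong H_n(\mathscr{R}^{\epsilon})$.

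Taking instead vertical homology first gives ${}^{\mathrm{II}}E^1_{p,q}=\bigoplus_{\dim\sigma=p}H_q(\mathscr{R}^{\epsilon}_{\sigma})=C_p(N_{\mathscr{V}},\cosheaf{F}^{\epsilon}_q)$ with $d^1$ the cosheaf boundary, so ${}^{\mathrm{II}}E^2_{p,q}=H_p(C_{\bullet}\cosheaf{F}^{\epsilon}_q)$. This page has only the two nonzero columns $p=0,1$, so all differentials $d^r$ with $r\ge 2$ vanish and ${}^{\mathrm{II}}E^2={}^{\mathrm{II}}E^{\infty}$. Therefore $H_n(\mathscr{R}^{\epsilon})\cong H_n(\total D)$ carries a two-step filtration whose associated graded pieces are $H_0(C_{\bullet}\cosheaf{F}^{\epsilon}_n)$ and $H_1(C_{\bullet}\cosheaf{F}^{\epsilon}_{n-1})$, and over the field $\field{K}$ the resulting short exact sequence splits, which is exactly \eqref{DistributedComputationK}.

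The hard part is really the choice of $\epsilon_*$: once it is in hand, the double-complex argument is the standard generalized Mayer--Vietoris one and follows \cite{CGNDiscrete13}, with the $1$-dimensionality of $N_{\mathscr{V}}$ doing the crucial work twice over -- it bounds $|I_s|\le 2$ so that the horizontal slices are acyclic, and it makes ${}^{\mathrm{II}}E^2$ a two-column page so that the sequence degenerates. The one computation that needs care is verifying, through the incidence-number signs, that the summand of $(D_{\bullet,q},\delta)$ indexed by $s$ is genuinely isomorphic to the simplicial chain complex of $\Delta^{I_s}$; this is routine bookkeeping that mirrors the corresponding sign check in \cite{CGNDiscrete13}.
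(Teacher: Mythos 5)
Your proof is correct, and the spectral-sequence half follows the same Mayer--Vietoris double-complex route that the paper invokes via Theorem~5.7 of Curry--Ghrist--Nanda; you carry it out explicitly, and your two-page computation is right (including the bound $|I_s|\le 2$ forced by one-dimensionality of the nerve, and the field splitting of the two-column $E^2=E^\infty$). A minor terminological slip: the horizontal slices indexed by a $q$-simplex $s$ are not acyclic but have the homology of a point, $H_0=\field{K}$ and $H_{\ge 1}=0$; that is exactly what your $E^1$ collapse uses, so no harm done. The genuine point of difference --- and the one the paper itself flags as the technical content of the lemma --- is the construction of $\epsilon_*$. You take $\epsilon_*$ to be the minimum diameter of any $S\subseteq P$ with $|S|\ge 2$ that is not contained in a single cover element. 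This is in fact the sharp (largest) threshold for which the Rips system covers $\mathscr{R}^{\epsilon}$, and with your definition coverage for $\epsilon<\epsilon_*$ is essentially a tautology, with no case analysis needed. The paper instead builds $\epsilon_*$ point by point: for each $p\in P$ it defines a local cutoff $K_p$ (a single minimum of distances to points outside $p$'s cover set when $p$ lies in just one set, a two-stage $\min\{K_p',K_p''\}$ when $p$ lies in an overlap) and sets $\epsilon_*=\min_p K_p$. That construction requires a short case analysis to verify that each simplex lands in a single cover element, but it is computed by scanning points and pairs rather than arbitrary subsets of $P$. Both thresholds are valid; yours trades computability for transparency.
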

\begin{proof}
Theorem 5.7 in \cite{CGNDiscrete13} can be used to obtain the above isomorphism when the Rips system covers the full Rips complex: the technical work is in showing that this happens for sufficiently small $\epsilon$: see Appendix \ref{A:DiscreteCGNBound} for details.
\end{proof}
The following two examples illustrate the difference between $H_0(C_{\bullet}\cosheaf{F}_1^{\epsilon})$ and $H_1(C_{\bullet}\cosheaf{F}_0^{\epsilon})$.
\begin{example}
\label{SmallE}
Let $P \subset \mathbb{R}^2$ be a point cloud covered by three sets $V_1, V_2, V_3$ with nerve an interval as illustrated in Figure \ref{fig:AllPoints}.
\begin{figure}[h!]
\centering
\includegraphics[scale=0.4]{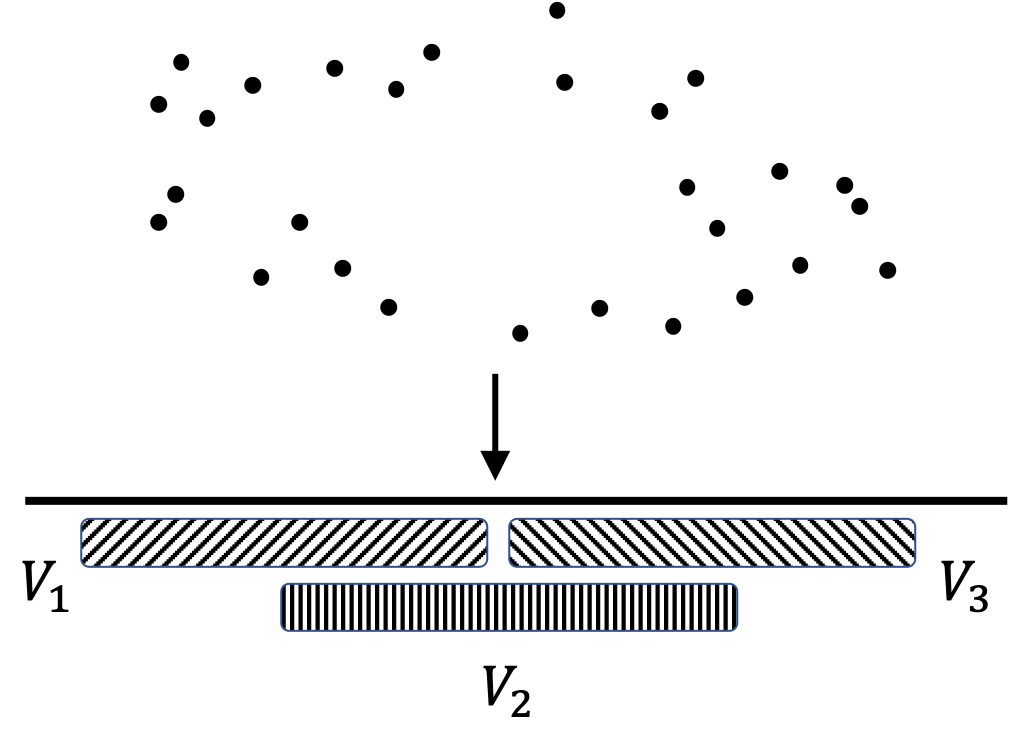}
\caption[Point cloud $P$ and covering by inverse image of projected intervals.]{Point cloud $P$ and covering by inverse image of projected intervals.}
\label{fig:AllPoints}
\end{figure}

Consider $H_1(\mathscr{R}^{\epsilon})$ for some parameter $\epsilon$. The Rips complex $\mathscr{R}^{\epsilon}$ and the Rips system over the nerve $N_{\mathscr{V}}$ are illustrated in Figures \ref{fig:RipsComplexEx1} and \ref{fig:RipsSystemEx1}. Let $v_1, v_2, v_3$ denote the vertices of $N_{\mathscr{V}}$ that correspond to the cover sets $V_1, V_2, V_3$. Let $e_{12}$ and $e_{23}$ denote the edges of $N_{\mathscr{V}}$ that correspond to $V_1 \cap V_2$ and $V_2 \cap V_3$.

\begin{figure}[H]
\begin{subfigure}{0.3\textwidth}
\vfill
\includegraphics[scale=0.3]{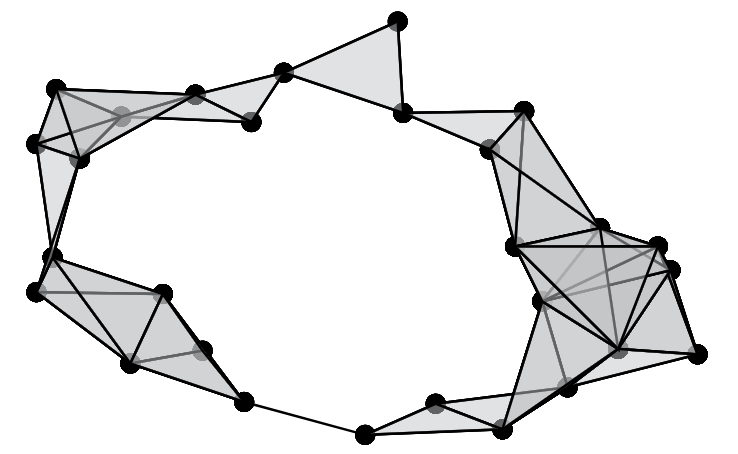}
\vfill
\caption{Rips complex $\mathscr{R}^{\epsilon}$.}\label{fig:RipsComplexEx1}
\end{subfigure}\hspace{18mm}%
\begin{subfigure}{0.55\textwidth}
\includegraphics[scale=0.2]{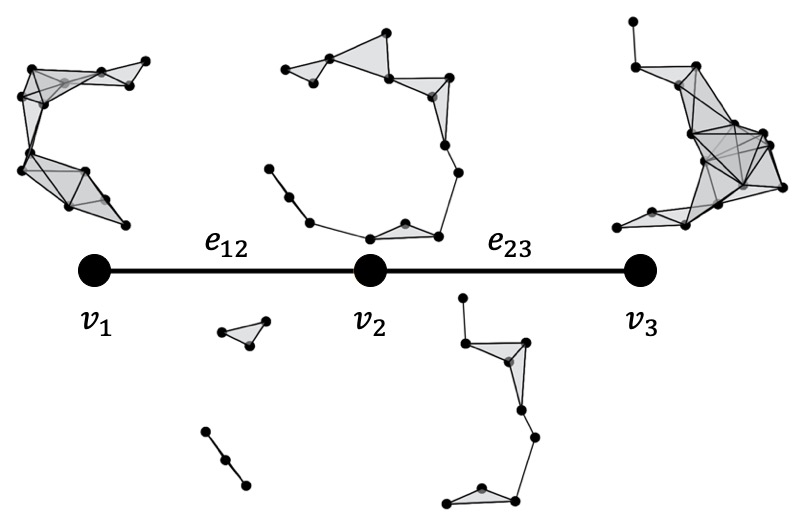}
\caption{Rips system over the nerve $N_{\mathscr{V}}$. }\label{fig:RipsSystemEx1}
\end{subfigure}
\caption{Rips complex and the associated Rips system.}\label{fig:RipsSystem80}
\end{figure}

The two relevant cosheaves, $\cosheaf{F}_0^{\epsilon}$ and $\cosheaf{F}_1^{\epsilon}$, are illustrated in Figures \ref{fig:F12} and \ref{fig:F02}. The maps $\cosheaf{F}^{\epsilon}_0(v_1 \FaceRelation e_{12})$ and  $\cosheaf{F}^{\epsilon}_0(v_2 \FaceRelation e_{12})$ are represented by the matrix $\begin{bmatrix}
1  & 1 \end{bmatrix} $. All other maps are identity maps.

\begin{figure}[H]
\hfill
\begin{subfigure}{0.35\textwidth}
\includegraphics[width=1\textwidth]{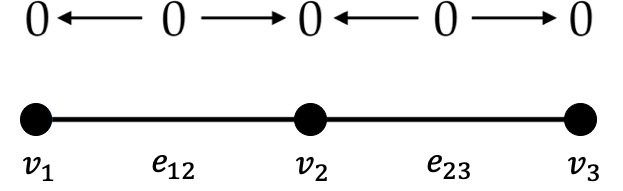}
\caption{Cosheaf $\cosheaf{F}_1^{\epsilon}$.}\label{fig:F12}
\end{subfigure}%
\hfill
\begin{subfigure}{0.35\textwidth}
\includegraphics[width=1\textwidth]{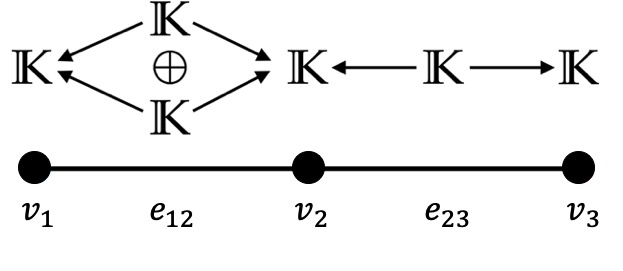}
\caption{Cosheaf $\cosheaf{F}_0^{\epsilon}$.}\label{fig:F02}
\end{subfigure}
\hfill\null
\caption{The two relevant cosheaves for computing $H_1(\mathscr{R}^{\epsilon}).$}\label{fig:Cosheaves}
\end{figure}

One can verify that Equation (\ref{DistributedComputationK}) holds by computing
\begin{equation}
\label{CosheafHomologies1}
H_0(C_{\bullet}\cosheaf{F}_1^{\epsilon})=0, \quad H_1(C_{\bullet}\cosheaf{F}_0^{\epsilon}) =\field{K}.
\end{equation}
\end{example}

\begin{example}
\label{LargeE}

Consider now a parameter $\epsilon'$ that is larger than the parameter from Example \ref{SmallE}. The Rips complex $\mathscr{R}^{\epsilon'}$ and the Rips system are illustrated in Figure \ref{fig:RipsComplexEx2} and \ref{fig:RipsSystemEx2}.

\begin{figure}[h!]
\begin{subfigure}{0.3\textwidth}
\vfill
\includegraphics[scale=0.3]{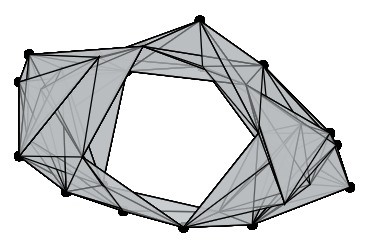}
\vfill
\caption{Rips complex $\mathscr{R}^{\epsilon'}$.}\label{fig:RipsComplexEx2}
\end{subfigure}\hspace{18mm}%
\begin{subfigure}{0.55\textwidth}
\includegraphics[scale=0.2]{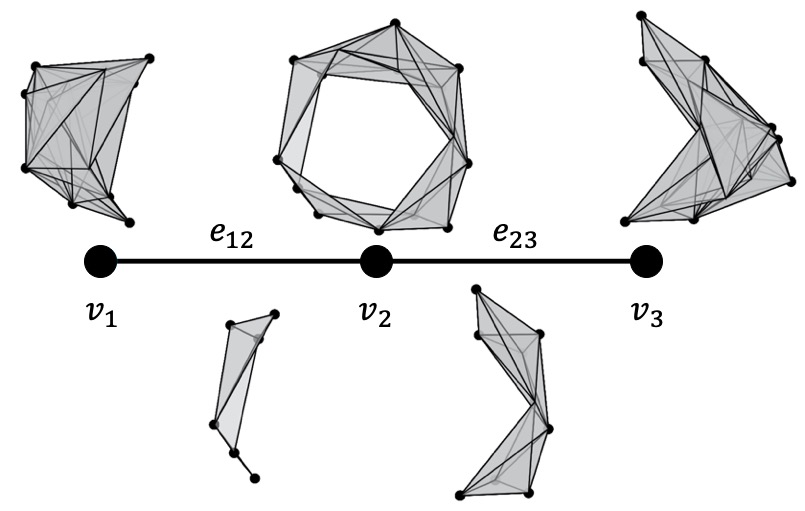}
\caption{Rips system over the nerve $N_{\mathscr{V}}$.}\label{fig:RipsSystemEx2}
\end{subfigure}
\caption{Rips complex and the associated Rips system.}\label{fig:RipsSystem140}
\end{figure}

One can compute cosheaves $\cosheaf{F}_1^{\epsilon'}$, $\cosheaf{F}_0^{\epsilon'}$, and compute the following cosheaf homologies

\begin{equation}
\label{CosheafHomologies2}
H_0(C_{\bullet}\cosheaf{F}_1^{\epsilon'}) = \field{K}, \quad H_1(C_{\bullet}\cosheaf{F}_0^{\epsilon'})=0.
\end{equation}

To compare the cosheaf homologies from Equations (\ref{CosheafHomologies1}) and (\ref{CosheafHomologies2}) for the two parameters $\epsilon < \epsilon'$, note that both $H_1(\mathscr{R}^{\epsilon}) = \field{K}$ and $H_1(\mathscr{R}^{\epsilon'}) = \field{K}$. In Example \ref{SmallE}, the homology class of $H_1(\mathscr{R}^{\epsilon})$ is detected by $H_1(C_{\bullet}\cosheaf{F}_0^{\epsilon})$, while in Example \ref{LargeE}, the homology class of $H_1(\mathscr{R}^{\epsilon'})$ is detected by $H_0(C_{\bullet}\cosheaf{F}_1^{\epsilon'})$. The difference can be explained by comparing the Rips systems from Figures \ref{fig:RipsSystemEx1} and \ref{fig:RipsSystemEx2}. In Figure \ref{fig:RipsSystemEx2}, the Rips complex $\mathscr{R}^{\epsilon'}_{v_2}$ contains a non-trivial 1-cycle, while in Figure \ref{fig:RipsSystemEx1}, there is no such 1-cycle contained in any of the complexes $\mathscr{R}^{\epsilon}_{\sigma}$ for $\sigma \in N_{\mathscr{V}}$.

In general, $H_0(C_{\bullet}\cosheaf{F}_n^{\epsilon})$ reads $n$-cycles that exist in $\mathscr{R}^{\epsilon}_{\sigma}$ for some $\sigma \in N_{\mathscr{V}}$. On the other hand, $H_1(C_{\bullet}\cosheaf{F}_{n-1}^{\epsilon})$ reads $n$-cycles of $\mathscr{R}^{\epsilon}$ that are not cycles of $H_n(\mathscr{R}^{\epsilon}_{\sigma})$ for any $\sigma \in N_{\mathscr{V}}$.
\end{example}

\section{Distributed computation of persistent homology}
\label{sec:DistributedPH}
We restate the main question using the terminology introduced so far.

Given a point cloud $P$, one can build Rips complexes for increasing parameter values $(\epsilon_i)_{i=1}^N$, resulting in the following sequence of Rips complexes and inclusion maps
\[ \mathscr{R}^1 \xhookrightarrow{\iota^1} \mathscr{R}^2 \xhookrightarrow{\iota^2} \cdots \xhookrightarrow{\iota^{N-1}} \mathscr{R}^N.\]
By applying the homology functor of dimension $n$, one obtains the persistence module
\begin{equation}
\label{InterestPM}
\mathbb{V}:  H_{n}(\mathscr{R}^1) \xrightarrow{\iota^1_*} H_{n}(\mathscr{R}^2) \xrightarrow{\iota^2_*} \cdots \xrightarrow{\iota^{N-1}_*} H_{n}(\mathscr{R}^N).
\end{equation}
Assuming a covering $\mathscr{V}$ of the point cloud $P$ that has 1-d nerve (cf. Lemma \ref{DiscreteCGNBound}), we have the following isomorphisms
\begin{equation}
H_n(\mathscr{R}^i) \cong H_0( C_{\bullet} \cosheaf{F}^i_n) \oplus H_1(C_{\bullet}\cosheaf{F}^i_{n-1}) \end{equation}
for every $i$ and $n$. Our main question is stated as the following.

\smallskip
\textit{
Can we construct a persistence module
\[
	\mathbb{V}_{\Psi} : H_0( C_{\bullet} \cosheaf{F}^1_n) \oplus H_1(C_{\bullet}\cosheaf{F}^1_{n-1}) \xrightarrow{\Psi^1} \cdots \xrightarrow{\Psi^{N-1}} H_0( C_{\bullet} \cosheaf{F}^{N}_n) \oplus H_1(C_{\bullet}\cosheaf{F}^{N}_{n-1})
\]
isomorphic to the persistence module $\mathbb{V}$ from Equation (\ref{InterestPM})?
}
\smallskip

In \S\ref{CosheafMorphismSection}, we show that the most naturally induced morphisms of cosheaf homologies are not enough to construct the desired persistence module $\mathbb{V}_{\Psi}$. In \S\ref{SpectralSequenceSection}, we use spectral sequences to construct a map $\psi^i : H_1(C_{\bullet} \cosheaf{F}^i_{n-1}) \to H_0(C_{\bullet} \cosheaf{F}^{i+1}_n)$ that can be used to define the persistence module $\mathbb{V}_{\Psi}$. As it will be illustrated in \S\ref{SpectralSequenceSection}, there are multiple choices involved in defining the map $\psi^i$, and the construction of $\mathbb{V}_{\Psi}$ requires that the maps $\psi^i$ be defined consistently across parameters $(\epsilon_i)_{i=1}^N$. In \S\ref{Construction}, we provide an algorithm to construct the maps $\psi^i$ consistently across parameters $(\epsilon_i)_{i=1}^N$. Furthermore, we construct the persistence module
\[
	\mathbb{V}_{\Psi} : H_0( C_{\bullet} \cosheaf{F}^1_n) \oplus H_1(C_{\bullet}\cosheaf{F}^1_{n-1}) \xrightarrow{\Psi^1} \cdots \xrightarrow{\Psi^{N-1}} H_0( C_{\bullet} \cosheaf{F}^{N}_n) \oplus H_1(C_{\bullet}\cosheaf{F}^{N}_{n-1}).
\]
In \S\ref{IsoPersistenceModules}, we show that the persistence module $\mathbb{V}_{\Psi}$ is isomorphic to the persistence module $\mathbb{V}$ from Equation (\ref{InterestPM}).

\subsection{Cosheaf morphisms}
\label{CosheafMorphismSection}

Given a pair of parameters $\epsilon_i < \epsilon_{i+1}$ and $\sigma \in N_{\mathscr{V}}$, there exist maps $\cosheaf{F}^i_n(\sigma) \to \cosheaf{F}^{i+1}_n(\sigma)$ induced by the inclusion $\mathscr{R}^i_{\sigma} \hookrightarrow \mathscr{R}^{i+1}_{\sigma}$. The collection of such maps is the cosheaf morphism $\phi^i_n : \cosheaf{F}^i_n \to \cosheaf{F}^{i+1}_n$. In particular, the cosheaf morphisms $\phi^i_n$ and $\phi^i_{n-1}$ induce the following morphisms on homology
\begin{eqnarray}
\label{InducedMaps}
\begin{split}
H_0(\phi^i_n) & : H_0(C_{\bullet}\cosheaf{F}^i_n) \to H_0(C_{\bullet}\cosheaf{F}^{i+1}_n),\\
H_1(\phi^i_{n-1}) & : H_1(C_{\bullet}\cosheaf{F}^i_{n-1}) \to H_1(C_{\bullet}\cosheaf{F}^{i+1}_{n-1}).
\end{split}
\end{eqnarray}
The maps $H_0(\phi^i_n)$ and $H_1(\phi^i_{n-1})$ are insufficient to construct a persistence module isomorphic to $\mathbb{V}$ from Equation (\ref{InterestPM}), as we now demonstrate. Using the maps $H_0(\phi^i_n)$ and $H_1(\phi^i_{n-1})$, one defines
\[
	\wmap^i : H_0(C_{\bullet}\cosheaf{F}^i_n) \oplus H_1(C_{\bullet}\cosheaf{F}^i_{n-1}) \to H_0(C_{\bullet}\cosheaf{F}^{i+1}_n) \oplus H_1(C_{\bullet}\cosheaf{F}^{i+1}_{n-1})
\]
by $\wmap^i ( u,v) = ( H_0(\phi^i_n)(u), H_1(\phi^i_{n-1})(v))$.
The obvious attempt to reconstruct $\mathbb{V}$ is the persistence module
\[
	\wmodule : H_0( C_{\bullet} \cosheaf{F}^1_n) \oplus H_1(C_{\bullet}\cosheaf{F}^1_{n-1}) \xrightarrow{\wmap^1} \cdots \xrightarrow{\wmap^{N-1}} H_0( C_{\bullet} \cosheaf{F}^{N}_n) \oplus H_1(C_{\bullet}\cosheaf{F}^{N}_{n-1}).
\]

{\bf Claim:} $\wmodule$ cannot be isomorphic to $\mathbb{V}$ from Equation (\ref{InterestPM}). 

{\em Proof:} The putative isomorphisms $\Phi^i: H_0(C_{\bullet} \cosheaf{F}^i_n) \oplus H_1(C_{\bullet} \cosheaf{F}^i_{n-1}) \to H_n(\mathscr{R}^i)$ would yield commutative diagrams
\begin{equation}
\label{CommutativeDiagram1}
\begin{tikzcd}
H_0(C_{\bullet} \cosheaf{F}^i_n) \oplus H_1(C_{\bullet} \cosheaf{F}^i_{n-1}) \arrow{d}{\wmap^i} \arrow{r}{\Phi^i} & H_n(\mathscr{R}^i) \arrow{d}{\iota^i_*} \\
H_0(C_{\bullet} \cosheaf{F}^{i+1}_n) \oplus H_1(C_{\bullet} \cosheaf{F}^{i+1}_{n-1}) \arrow{r}{\Phi^{i+1}} & H_n(\mathscr{R}^{i+1})
\end{tikzcd}
\end{equation}

However, Examples \ref{SmallE} and \ref{LargeE} illustrate a situation where it is impossible to find isomorphisms $\Phi^i$ and $\Phi^{i+1}$ that make Diagram \ref{CommutativeDiagram1} commute.
Assume that there exists an isomorphism $\Phi^i$, and let $(0,s) \in H_0(C_{\bullet} \cosheaf{F}^i_1) \oplus H_1(C_{\bullet} \cosheaf{F}^i_0)$ be the element such that $\Phi^i(0,s)$ represents the non-trivial $1$-cycle in Figure \ref{fig:RipsComplexEx1}. Then, $\iota^i_* \circ \Phi^i (0,s)$ must be the non-trivial 1-cycle in Figure \ref{fig:RipsComplexEx2}. On the other hand, with our current construction of $\wmap^i$, we have $\wmap^i(0,s)=0$, and hence, $\Phi^{i+1} \circ \wmap^i (0,s) =0$ for any isomorphism $\Phi^{i+1}$. Thus, there are no isomorphisms $\Phi^i$ and $\Phi^{i+1}$ that make Diagram \ref{CommutativeDiagram1} commute.

The core reason why
Diagram \ref{CommutativeDiagram1} fails to commute is that
as we increase the parameter from $\epsilon_i$ to $\epsilon_{i+1}$, a cycle in $H_1(C_{\bullet}\cosheaf{F}^i_{n-1})$ can become homologous to a cycle in $H_0(C_{\bullet}\cosheaf{F}^{i+1}_n)$. The current construction of $\wmap^i$ fails to take such subtlety into account. This motivates our technique: we construct a map from $H_1(C_{\bullet}\cosheaf{F}^i_{n-1})$ to $H_0(C_{\bullet}\cosheaf{F}^{i+1}_n)$.

\subsection{Connecting morphism via spectral sequences}
\label{SpectralSequenceSection}

We seek a map $\psi^i : H_1(C_{\bullet} \cosheaf{F}^i_{n-1}) \to H_0(C_{\bullet} \cosheaf{F}^{i+1}_n)$  for the construction of the persistence module $\mathbb{V}_{\Psi}$.
The plan to build $\psi^i$ is as an extension of a map $\delta^i: \ker H_1(\phi^i_{n-1}) \to H_0(C_{\bullet} \cosheaf{F}^{i+1}_n)$ using a spectral sequence type argument.


\begin{Theorem}
\label{ConnectingHomThm2}
Let $P$ be a point cloud with finite cover $\mathscr{V}$ having one-dimensional nerve $N_{\mathscr{V}}$. There exists a morphism $\delta^i : \ker H_1( \phi^i_{n-1}) \to H_0(C_{\bullet} \cosheaf{F}^{i+1}_n)$ induced by cosheaf morphisms $\phi^i_n$ and $\phi^i_{n-1}$.
\end{Theorem}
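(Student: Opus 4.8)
The plan is to construct $\delta^i$ by a diagram chase through the double complex arising from the Rips system, using the spectral-sequence machinery of the Mayer–Vietoris principle as in the proof of Lemma~\ref{DiscreteCGNBound}. Recall that for a one-dimensional nerve the only nonzero columns of the cosheaf double complex are those over $0$-simplices and $1$-simplices, so the associated spectral sequence degenerates quickly: the class of an $n$-cycle of $\mathscr{R}^i$ decomposes, via the isomorphism of Lemma~\ref{DiscreteCGNBound}, into an $H_0(C_\bullet\cosheaf{F}^i_n)$ part and an $H_1(C_\bullet\cosheaf{F}^i_{n-1})$ part, and the latter is exactly the image of the $d_1$-differential. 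I would first fix, for each $i$, explicit chain-level representatives: an element $v \in \ker H_1(\phi^i_{n-1})$ is represented by a collection of $(n-1)$-cycles $\{z_{e}\}$ indexed by the edges of $N_{\mathscr V}$ (an element of $C_1(N_{\mathscr V},\cosheaf{F}^i_{n-1})$ that is a cycle for $\partial_1$), and the fact that $H_1(\phi^i_{n-1})(v)=0$ means that after mapping into the $\epsilon_{i+1}$ Rips system this collection becomes a boundary: $\phi^i_{n-1}(\{z_e\}) = \partial_1(\{w_\sigma\})$ for some $\{w_\sigma\} \in C_2$... but with a $1$-d nerve there are no $2$-simplices, so in fact $\phi^i_{n-1}(\{z_e\})$ must itself be zero in $H_1(C_\bullet\cosheaf F^{i+1}_{n-1})$, meaning each $\phi^i_{n-1}(z_e)$ bounds an $n$-chain $c_e$ in $\mathscr R^{i+1}_e$, and the edge-sums of these $c_e$ patch to a genuine $n$-cycle of the $\epsilon_{i+1}$ Rips complex living over vertices.

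More precisely, the second step is to turn $v$ into an honest $n$-cycle of $\mathscr{R}^{i+1}$: because $\{z_e\}$ is a $\partial_1$-cycle, at each vertex $v_j$ the signed sum $\sum_{e \ni v_j}[v_j:e]\,\phi^i_{n-1}(z_e)$, viewed in $H_{n-1}(\mathscr R^{i+1}_{v_j})$, is null-homologous precisely when $v$ dies; choosing $n$-chains $b_j$ in $\mathscr R^{i+1}_{v_j}$ with $\partial b_j$ equal to that sum, the chain $\sum_j b_j - \sum_e \tilde c_e$ (with $\tilde c_e$ a chain in $\mathscr R^{i+1}_e$ bounding $\phi^i_{n-1}(z_e)$) is a cycle in $\mathscr R^{i+1}$. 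Its class, pushed through the inverse of the Lemma~\ref{DiscreteCGNBound} isomorphism and projected to the $H_0(C_\bullet\cosheaf F^{i+1}_n)$ summand, is declared to be $\delta^i(v)$. The third step is to check this is well-defined and linear: different choices of $\tilde c_e$ and $b_j$ differ by $n$-cycles supported on single cells, which change the resulting class only inside the $H_0$-summand image $\cup\,\mathrm{im}$ — and I must verify they actually change it by a boundary of $\partial_1$ in $C_\bullet\cosheaf F^{i+1}_n$, i.e. produce zero in $H_0(C_\bullet\cosheaf F^{i+1}_n)$; likewise different chain representatives of the same class $v$ differ by $\partial_1$-boundaries in $C_\bullet\cosheaf F^i_{n-1}$, which I must trace through and show die. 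This is the standard "the connecting map is well-defined" verification, cleanest when phrased as the $d_1$ (or $d_2$) differential on the spectral sequence page, which automatically has the right source and target and is automatically natural.

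The cleanest route, and the one I would actually write, is to avoid ad hoc chains entirely: build the short exact sequence of cosheaf chain complexes (or the filtered double complex) that computes $H_n(\mathscr R^{i+1})$ from the Rips system, observe that the cosheaf morphisms $\phi^i_n,\phi^i_{n-1}$ give a map of such double complexes from level $i$ to level $i+1$, and let $\delta^i$ be the induced map on the appropriate page of the comparison spectral sequence restricted to the kernel of $H_1(\phi^i_{n-1})$ — naturality of the spectral sequence then gives both well-definedness and the statement that $\delta^i$ is induced by $\phi^i_n$ and $\phi^i_{n-1}$ for free. The main obstacle I anticipate is bookkeeping the two-step nature of the construction: the class $v$ lives at the $i$ level but $\delta^i(v)$ lives at the $i+1$ level, so the relevant differential is not a single spectral sequence differential for one complex but a "mixed" connecting map between the spectral sequences of two complexes; making this precise — identifying exactly which sub/quotient of which page carries $\delta^i$ and checking the needed exactness so that the zig-zag closes up — is where the real work lies, and it is essentially the diagram chase of \S4.2.3 of \cite{YoonCellular18} that the introduction references.
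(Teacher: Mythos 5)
Your chain-level construction is essentially the same diagram chase as the paper's proof: fix a chain representative of $v$, use the kernel condition to bound its image at the edge level of the $\epsilon_{i+1}$ Rips system (your $\tilde c_e$ correspond to the paper's $\alpha^{i+1}$), use the $\partial_1$-cycle condition to bound at the vertex level (your $b_j$ play the role of the paper's $\incv^i_n\beta^i$), combine these into a cycle representing a class in $H_0(C_\bullet\cosheaf{F}^{i+1}_n)$, and extend linearly. So the approach matches. A few things need sharpening.

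First, you claim the vertex-level signed sum $\sum_{e\ni v_j}[v_j:e]\,\phi^i_{n-1}(z_e)$ is null-homologous in $H_{n-1}(\mathscr R^{i+1}_{v_j})$ ``precisely when $v$ dies.'' That conflates two separate conditions. That signed sum is null-homologous for \emph{every} $\partial_1$-cycle $\{z_e\}$ --- it is exactly the equation $\partial^i_{n-1}\langle z_e\rangle=0$ pushed forward --- independently of whether $v$ dies. The death condition $v\in\ker H_1(\phi^i_{n-1})$ is instead what furnishes the \emph{edge}-level bounds $\tilde c_e$. Keeping the two roles separate matters because the two lifts target different positions in the double complex.

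Second, your ``cleanest route'' paragraph overpromises. The map $\delta^i$ is not a map of spectral sequences and will not fall out of naturality for free, because it is genuinely non-canonical: the paper's proof ends by noting that while the choice of $\alpha^{i+1}$ does not affect $\delta^i$, the choice of $\beta^i$ does. The theorem therefore asserts only existence, and pinning down one particular $\delta^i$ usable across all filtration steps is the entire content of the $\Gamma^i$ bookkeeping in \S 4.3. Relatedly, you propose choosing $b_j$ directly at level $i+1$, whereas the paper chooses $\beta^i$ at level $i$ and pushes forward via $\incv^i_n$. Both produce a valid $\delta^i$ for this existence statement, but the paper's choice is not cosmetic: carrying the $\beta^i$'s forward through $\Gamma^i$ is what makes the inductive construction of $\mathbb{V}_\Psi$ coherent, so if you want to reuse the map you should bound at level $i$.
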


\begin{proof}
\sloppy Consider the following commutative diagram. Let $\incv^i_n :\bigoplus\limits_{v \in N_{\mathscr{V}}} C_n(\mathscr{R}^i_v) \to \bigoplus\limits_{v \in N_{\mathscr{V}}} C_n(\mathscr{R}^{i+1}_v)$ denote the collection of inclusion maps of the Rips complexes over the vertices of $N_{\mathscr{V}}$, and let $\ince^i_n : \bigoplus\limits_{e \in N_{\mathscr{V}}} C_n(\mathscr{R}^i_e) \to \bigoplus\limits_{e \in N_{\mathscr{V}}} C_n(\mathscr{R}^{i+1}_e)$ denote the collection of inclusion maps of the Rips complexes over the edges of $N_{\mathscr{V}}$. Let $e^i_n:\bigoplus\limits_{e \in N_{\mathscr{V}}}C_n(\mathscr{R}^i_e) \to \bigoplus\limits_{v \in N_{\mathscr{V}}} C_n(\mathscr{R}^i_v)$ denote the collection of inclusion maps.
The front and back faces of the cube in Diagram \ref{SS0} are the $0^{\textrm{th}}$ pages of the spectral sequence in the proof of Lemma \ref{DiscreteCGNBound} for parameters $\epsilon_i$ and $\epsilon_{i+1}$ respectively.

\begin{equation}
\label{SS0}
\begin{tikzcd}[column sep=small]
 \quad &\quad \arrow{d} & \quad & \quad \arrow{d} \\
\quad \arrow{d} & \bigoplus\limits_{v \in N_{\mathscr{V}}} C_n(\mathscr{R}^{i+1}_v)   \arrow[dd,"\partial", near start] \arrow[from=rr,"e^{i+1}_n" near end] & \quad \arrow{d} &\bigoplus\limits_{e \in N_{\mathscr{V}}} C_n(\mathscr{R}^{i+1}_{e})  \arrow[dd,"\partial", near start]  \\
 \bigoplus\limits_{v \in N_{\mathscr{V}}} C_{n}(\mathscr{R}^i_v) \arrow{ur}{\incv^i_n} \arrow[dd, "\partial", near start ] \arrow[from=rr, crossing over,"e^i_n" near end] & &  \bigoplus\limits_{e \in N_{\mathscr{V}}} C_{n}(\mathscr{R}^i_{e})  \arrow{ur}{\ince^i_{n}}  \\
 &   \bigoplus\limits_{v \in N_{\mathscr{V}}} C_{n-1}(\mathscr{R}^{i+1}_v) \arrow{d}   \arrow[from=rr, "e^{i+1}_{n-1}" near end] & & \bigoplus\limits_{e \in N_{\mathscr{V}}} C_{n-1}(\mathscr{R}^{i+1}_{e}) \arrow{d}  \\
 \bigoplus\limits_{v \in N_{\mathscr{V}}} C_{n-1}(\mathscr{R}^i_v) \arrow{d} \arrow[from=rr,"e^i_{n-1}" near end]  \arrow{ur}{\incv^i_{n-1}} & \quad &    \bigoplus\limits_{e \in N_{\mathscr{V}}} C_{n-1}(\mathscr{R}^i_{e}) \arrow{ur}{\ince^i_{n-1}} \arrow{d} \arrow[from=uu, crossing over,"\partial", near start] & \quad \\
 \quad &\quad & \quad & \quad
\end{tikzcd}
\end{equation}

Computing the homology with respect to the boundary maps $\partial$ yields Diagram \ref{SS1}, in which the maps $\partial_n$ are boundary maps of the chain complexes $C_{\bullet} \cosheaf{F}^i_n$ of the respective cosheaves.

\begin{equation}
\label{SS1}
\begin{tikzcd}[column sep=small]
 \quad &\quad \dar[dashed,dash] & \quad & \quad \dar[dashed,dash] \\
\quad \arrow[d, dashed, dash] & \bigoplus\limits_{v \in N_{\mathscr{V}}} H_n(\mathscr{R}^{i+1}_v)   \arrow[dd,dashed,dash] \arrow[from=rr,"\partial^{i+1}_n" near end] & \quad \arrow[d,dashed,dash] &\bigoplus\limits_{e \in N_{\mathscr{V}}} H_n(\mathscr{R}^{i+1}_{e})  \arrow[dd,dashed,dash]  \\
 \bigoplus\limits_{v \in N_{\mathscr{V}}} H_{n}(\mathscr{R}^i_v) \arrow{ur}{(\phi^{i}_n)_v} \arrow[dd, dashed,dash ] \arrow[from=rr, crossing over,"\partial^i_n" near end] & &  \bigoplus\limits_{e \in N_{\mathscr{V}}} H_{n}(\mathscr{R}^i_{e})  \arrow{ur}{(\phi^{i}_{n})_e}  \\
 &   \bigoplus\limits_{v \in N_{\mathscr{V}}} H_{n-1}(\mathscr{R}^{i+1}_v) \arrow[d,dashed,dash]   \arrow[from=rr, "\partial^{i+1}_{n-1}" near end] & & \bigoplus\limits_{e \in N_{\mathscr{V}}} H_{n-1}(\mathscr{R}^{i+1}_{e}) \arrow[d,dashed,dash]  \\
 \bigoplus\limits_{v \in N_{\mathscr{V}}} H_{n-1}(\mathscr{R}^i_v) \dar[dashed,dash] \arrow[from=rr,"\partial^i_{n-1}" near end]  \arrow{ur}{(\phi^{i}_{n-1})_v} & \quad &    \bigoplus\limits_{e \in N_{\mathscr{V}}} H_{n-1}(\mathscr{R}^i_{e}) \arrow{ur}{(\phi^{i}_{n-1})_e} \dar[dashed,dash] \arrow[from=uu, crossing over, dashed, dash] & \quad \\
 \quad &\quad & \quad & \quad
\end{tikzcd}
\end{equation}

Computing the homology with respect to these maps $\partial^i_n$ yields Diagram \ref{SS2} of cosheaf homologies. 

\begin{equation}
\label{SS2}
\begin{tikzcd}
 \quad &\quad \dar[dashed,dash] & \quad & \quad \dar[dashed,dash] \\
\quad \arrow[d, dashed, dash] & H_0(C_{\bullet} \cosheaf{F}^{i+1}_n)  \arrow[dd,dashed,dash] \arrow[from=rr,dashed,dash] & \quad \arrow[d,dashed,dash] &H_1(C_{\bullet} \cosheaf{F}^{i+1}_n)  \arrow[dd,dashed,dash]  \\
 H_0(C_{\bullet} \cosheaf{F}^i_n) \arrow{ur}{H_0(\phi^i_n)} \arrow[dd, dashed,dash ] \arrow[from=rr, crossing over,dashed,dash] & & H_1(C_{\bullet} \cosheaf{F}^i_n) \arrow{ur}{H_1(\phi^i_{n})}  \\
 &   H_0(C_{\bullet} \cosheaf{F}^{i+1}_{n-1})  \arrow[d,dashed,dash]   \arrow[from=rr, dashed,dash] & & H_1(C_{\bullet} \cosheaf{F}^{i+1}_{n-1})  \arrow[d,dashed,dash]  \\
 H_0(C_{\bullet} \cosheaf{F}^i_{n-1}) \dar[dashed,dash] \arrow[from=rr,dashed,dash]  \arrow{ur}{H_0(\phi^i_{n-1})} & \quad &    H_1(C_{\bullet} \cosheaf{F}^i_{n-1})   \arrow{ur}{H_1(\phi^i_{n-1})} \dar[dashed,dash] \arrow[from=uu, crossing over, dashed, dash] & \quad \\
 \quad &\quad & \quad & \quad
\end{tikzcd}
\end{equation}

To continue, some notation is necessary to distinguish where homology classes reside. Let $\langle$ $\rangle$ and $\{$ $\}$ denote the homology classes that appear in Diagrams \ref{SS1} and $\ref{SS2}$ respectively. For example, if $\gamma \in \bigoplus\limits_{e \in N_{\mathscr{V}}} C_{n-1}(\mathscr{R}^i_e)$ and $\partial \gamma=0$, then $\langle \gamma \rangle$ denotes the homology class of $\gamma$ in $\bigoplus\limits_{e \in N_{\mathscr{V}}} H_{n-1}(\mathscr{R}^i_e)$. Furthermore, if $\partial^i_{n-1} \langle \gamma \rangle=0$, then $\{ \langle \gamma \rangle \}$ denotes the homology class of $\langle \gamma \rangle$ in $H_1(C_{\bullet} \cosheaf{F}^i_{n-1})$.

With this notation in place, we define a map $\delta^i : \ker H_1(\phi^i_{n-1}) \to H_0(C_{\bullet} \cosheaf{F}^{i+1}_n)$ on a basis $\mathscr{B}^i_{\ker}$ of $\ker H_1(\phi^i_{n-1})$. For each basis element $\{ \langle b \rangle \} \in \mathscr{B}^i_{\ker}$, fix a coset representative $b^*$ of $\langle b \rangle$. Since $\{ \langle b \rangle \} \in \ker H_1(\phi^i_{n-1})$, we know that
$(\phi^{i}_{n-1})_e \langle b \rangle$ is trivial in $\bigoplus\limits_{e \in N_{\mathscr{V}}} H_{n-1}(\mathscr{R}^{i+1}_e)$. Thus, there exists $\alpha^{i+1} \in \bigoplus\limits_{e \in N_{\mathscr{V}}} C_n(\mathscr{R}^{i+1}_e)$ such that
\begin{equation}
\label{Alpha}
\partial \alpha^{i+1}=\ince^i_{n-1} b^* .
\end{equation}
Moreover,
since $\partial^i_{n-1} \langle b \rangle =0$,
there exists $\beta^i \in \bigoplus\limits_{v \in N_{\mathscr{V}}} C_n(\mathscr{R}^i_{v})$ such that
$\partial \beta^i=e_{n-1}^i b^*$.

With this, we now define
\begin{equation}
\label{deltai_def_basis}
\delta^i \{ \langle b \rangle \} = \{ \langle -e^{i+1}_n \circ \alpha^{i+1} +  \incv^i_n \circ \beta^i \rangle \}.
\end{equation}
One can check that $-e^{i+1}_n \circ \alpha^{i+1} +  \incv^i_n \circ \beta^i$ represents an element in $H_0(C_{\bullet} \cosheaf{F}^{i+1}_n)$.
%
Extending linearly from the basis gives a morphism
$\delta^i : \ker H_1(\phi^i_{n-1}) \to H_0(C_{\bullet} \cosheaf{F}^{i+1}_n)$.

Note that the construction of $\delta^i$ involved a choice of basis $\mathscr{B}^i_{\ker}$ of $\ker H_1(\phi^i_{n-1})$, its coset representatives, and a choice of $\alpha^{i+1}$ and $\beta^i$ for each basis vector $\{ \langle b \rangle \}$ of $\ker H_1(\phi^i_{n-1}) $. One can check that different choices of $\alpha^{i+1}$ do not affect the map $\delta^i$ (Appendix \ref{DeltaWellDefined}). However, the different choices of $\beta^i$ does affect the map $\delta^i$. In \S\ref{Construction}, we construct the map $\delta^i$ by carefully choosing the basis $\mathscr{B}^i_{\ker}$, its coset representatives, and $\beta^i$.
\end{proof}

Once we define the map $\delta^i$, we can extend the map to $\psi^i:H_1(C_{\bullet} \cosheaf{F}^i_{n-1}) \to H_0(C_{\bullet} \cosheaf{F}^{i+1}_n)$ as the following. Note that
\begin{equation}
\label{H1Decomposition}
H_1 (C_{\bullet} \cosheaf{F}^i_{n-1}) = A^i \oplus \ker H_1(\phi^i_{n-1})
\end{equation}
for some subspace $A^i$. Then every $\{ \langle y \rangle \} \in H_1(C_{\bullet} \cosheaf{F}^i_{n-1})$ can be written uniquely as $\{ \langle y \rangle \}= \{ \langle y_1 \rangle \} + \{ \langle y_2 \rangle \}$, with $ \{ \langle y_1 \rangle \} \in A^i$ and $\{ \langle y_2 \rangle \} \in \ker H_1(\phi^i_{n-1})$. Extend the map $\delta^i$ to $\psi^i : H_1(C_{\bullet} \cosheaf{F}^i_{n-1}) \to H_0(C_{\bullet} \cosheaf{F}^{i+1}_n)$ by
\begin{equation}
\label{PsiExtension}
\psi^i \{ \langle y \rangle \}  = \psi^i(\{ \langle y_1 \rangle \} + \{ \langle y_2 \rangle \} ) = \delta^i\{ \langle y_2 \rangle \}.
\end{equation}

\subsection{Construction of distributed persistence module}
\label{Construction}

In this section, we provide an algorithmic way of making consistent choices across parameters $(\epsilon_i)_{i=1}^N$ so that we can define the maps $\delta^i$ and $\psi^i$ consistently across parameters. The resulting collection of maps $(\psi^i)_{i=1}^N$ will then be used to construct the desired persistence module $\mathbb{V}_{\Psi}$.

We will inductively fix a basis $\mathscr{B}^i_{\ker}$ of $\ker H_1(\phi^i_{n-1})$ and extend it to a basis $\mathscr{B}^i$ of $H_1(C_{\bullet} \cosheaf{F}^i_{n-1})$. We will define a set map $\Gamma^i : \mathscr{B}^i \to \bigoplus\limits_{v \in N_{\mathscr{V}}} C_n(\mathscr{R}^i_v)$ that consistently chooses $\beta^i$'s  for each element of $\mathscr{B}^i$. We will then define $\delta^i$ on the basis $\{ \langle b \rangle \} \in \mathscr{B}^i_{\ker}$ by
\begin{equation}
\label{RealDelta}
\delta^i \{ \langle b  \rangle \} =\{ \langle \, -e^{i+1}_n \alpha^{i+1} + \incv^i_n \circ \, \Gamma^i \{ \langle b \rangle \} \, \rangle \}
\end{equation}
and extend the map linearly to $\ker H_1(\phi^i_{n-1})$. The map $\delta^i$ can then be extended to $\psi^i:H_1(C_{\bullet} \cosheaf{F}^i_{n-1}) \to H_0(C_{\bullet} \cosheaf{F}^{i+1}_n)$ as Equation (\ref{PsiExtension}).

Note that the construction of $\delta^i: \ker H_1(\phi^i_{n-1}) \to H_0(C_{\bullet} \cosheaf{F}^{i+1}_n)$ requires a choice of $\beta^i$ for only the basis elements $\{ \langle b \rangle \} \in \mathscr{B}^i_{\ker}$. However, we choose such $\beta^i$ for every basis element $\{ \langle b \rangle \} \in \mathscr{B}^i$ because such choice can affect the construction of $\delta^j$ for $j > i$.

\subsubsection*{Base case}

Recalling Diagram \ref{SS0},
%
%
%
note that
\[ H_1(C_{\bullet} \cosheaf{F}^1_{n-1}) = A^1 \oplus \ker H_1(\phi^1_{n-1})\]
for some subspace $A^1$. Let $\mathscr{B}^1_A$ be a basis of $A^1$, and let $\mathscr{B}^1_{\ker}$ be a basis of $\ker H_1(\phi^1_{n-1})$. Then,
\begin{equation}
\label{BaseCaseB}
\mathscr{B}^1 = \mathscr{B}^1_A \cup \mathscr{B}^1_{\ker}
\end{equation}
is a basis of $H_1(C_{\bullet} \cosheaf{F}^1_{n-1})$. For each basis element $\{ \langle b \rangle \} \in \mathscr{B}^1$, fix a coset representative $b^*$ of $\langle b \rangle$.

Define a set map $\Gamma^1: \mathscr{B}^1 \to \bigoplus\limits_{v \in N_{\mathscr{V}}} C_n(\mathscr{R}^1_v)$ as following. For each $\{ \langle b \rangle \} \in \mathscr{B}^1$, let
\begin{equation}
\label{Correspondence1}
\Gamma^1 \{ \langle b \rangle \} = \beta^1,
\end{equation}
where $\beta^1 \in \bigoplus\limits_{v \in N_{\mathscr{V}}} C_n(\mathscr{R}^1_v)$ is any element satisfying $\partial \beta^1 = e^1_{n-1} b^*$. 

Define $\delta^1$ on $\{ \langle b \rangle \} \in \mathscr{B}^1_{\ker}$ by
\[ \delta^1 \{ \langle b  \rangle \} =\{ \langle \, -e^{2}_n \alpha^{2} + \incv^1_n \circ \, \Gamma^1 \{ \langle b \rangle \} \, \rangle \}, \]
where $\alpha^2 \in \bigoplus\limits_{e \in N_{\mathscr{V}}} C_n(\mathscr{R}^{i+1}_e)$
is any element satisfying Equation (\ref{Alpha}).
Extend $\delta^1$ linearly to $\ker H_1(\phi^1_{n-1})$. The map $\delta^1$ can be used to define the map $\psi^1 : H_1(C_{\bullet} \cosheaf{F}^1_{n-1}) \to H_0(C_{\bullet} \cosheaf{F}^2_n)$ via Equation (\ref{PsiExtension}).

\subsubsection*{Inductive step}

\begin{itemize}
\renewcommand{\labelitemi}{$\bullet$}
\item \textbf{Inductive assumption.}

Note that
\[ H_1(C_{\bullet} \cosheaf{F}^{i-1}_{n-1}) = A^{i-1} \oplus \ker H_1(\phi^{i-1}_{n-1})\]
for some subspace $A^{i-1}$.
\begin{itemize}
\item Assume that there exists a basis $\mathscr{B}^{i-1}$ of $H_1(C_{\bullet} \cosheaf{F}^{i-1}_{n-1})$ that has the form
\[\mathscr{B}^{i-1} = \mathscr{B}^{i-1}_A \cup \mathscr{B}^{i-1}_{\ker}, \]
where $\mathscr{B}^{i-1}_A$ is a basis of $A^{i-1}$ and $\mathscr{B}^{i-1}_{\ker}$ is a basis of $\ker H_1(\phi^{i-1}_{n-1})$.
\item Assume that for each basis element $\{ \langle b \rangle \} \in \mathscr{B}^{i-1}$, a coset representative $b^*$ of $\langle b \rangle$ has been fixed.
\item Assume that there exists a set map $\Gamma^{i-1} : \mathscr{B}^{i-1} \to \bigoplus\limits_{v \in N_{\mathscr{V}}} C_n(\mathscr{R}^{i-1}_v)$ such that
\begin{equation}
\label{InductionAssumption}
\partial \Gamma^{i-1} \{ \langle b \rangle \} = e^{i-1}_{n-1} b^*
\end{equation}
for every $ \{ \langle b \rangle \} \in \mathscr{B}^{i-1}$.
\end{itemize}

\item \textbf{Step 1. Fix a basis $\mathscr{C}^i$ of $H_1(C_{\bullet} \cosheaf{F}^i_{n-1})$ that is compatible with $\mathscr{B}^{i-1}$.}

\sloppy By assumption, the basis $\mathscr{B}^{i-1}$ of $H_1(C_{\bullet} \cosheaf{F}^{i-1}_{n-1})$ has the form $\mathscr{B}^{i-1} = \mathscr{B}^{i-1}_A \cup \mathscr{B}^{i-1}_{\ker}$. Without loss of generality, assume that
\[ \mathscr{B}^{i-1}_A = \{ \, \{ \langle b_1 \rangle \}, \dots, \{ \langle b_t \rangle \} \, \}. \]
One can show that $\{ \langle \ince^{i-1}_{n-1} b_1 \rangle \}, \dots, \{ \langle \ince^{i-1}_{n-1} b_t \rangle \}$ are linearly independent in $H_1(C_{\bullet} \cosheaf{F}^i_{n-1})$ (Appendix \ref{LinearIndependence}). Let
\[ \mathscr{C}^i_{\im} =  \{ \, \{ \langle \ince^{i-1}_{n-1} b_1 \rangle \}, \dots, \{ \langle \ince^{i-1}_{n-1} b_t \rangle \} \, \}.\]
Extend $\mathscr{C}^i_{\im}$ to a basis $\mathscr{C}^i$ of $H_1(C_{\bullet} \cosheaf{F}^i_{n-1})$. Let $\mathscr{C}^i_D$ denote the basis vectors of $\mathscr{C}^i$ that are not in $\mathscr{C}^i_{\im}$, i.e.,
\begin{equation}
\label{BasisC}
\mathscr{C}^i = \mathscr{C}^i_{\im} \cup \mathscr{C}^i_D.
\end{equation}

If $\{ \langle c \rangle \} \in \mathscr{C}^i_{\im}$ such that $\{ \langle c \rangle \} = \{ \langle \ince^{i-1}_{n-1} b \rangle \}$, then let $ c^*= \ince^{i-1}_{n-1} b^*$ be the coset representative of $\langle c \rangle$. If $\{ \langle c \rangle \} \in \mathscr{C}^i_D$, fix any coset representative $c^*$ of $\langle c \rangle$.

\item \textbf{Step 2. Define a set map $\Gamma^i_{\mathscr{C}} : \mathscr{C}^i \to \bigoplus\limits_{v \in N_{\mathscr{V}}} C_n(\mathscr{R}^i_v)$.}

We will define a set map $\Gamma^i_{\mathscr{C}}: \mathscr{C}^i \to \bigoplus\limits_{v \in N_{\mathscr{V}}} C_n(\mathscr{R}^i_v)$ such that
\begin{equation}
\label{GammaOnCReq}
\partial \Gamma^i_{\mathscr{C}} \{ \langle c \rangle \} = e^i_{n-1} c^*
\end{equation}
for every $\{ \langle c \rangle \} \in \mathscr{C}^i$. Define
\begin{equation}
\label{GammaOnC}
\Gamma^i_{\mathscr{C}} \{ \langle c \rangle \} =
\begin{cases}
\incv^{i-1}_n \Gamma^{i-1} \{ \langle b \rangle \} & \text{if } \{ \langle c \rangle \} = \{ \langle \ince^{i-1}_{n-1} b \rangle \} \in \mathscr{C}^i_{\im} \\
\text{any } \beta^i \text{ satisfying } \partial \beta^i = e^i_{n-1} c^* & \text{if } \{ \langle c \rangle \} \in \mathscr{C}^i_D
\end{cases}
\end{equation}

\item \textbf{Step 3. Fix a new basis $\mathscr{B}^i$ of $H_1(C_{\bullet} \cosheaf{F}^i_{n-1})$}

Note that
\[ H_1(C_{\bullet} \cosheaf{F}^{i}_{n-1}) = A^{i} \oplus \ker H_1(\phi^{i}_{n-1})\]
for some subspace $A^i$. Let $\mathscr{B}^i_A$ be a basis of $A^i$, and let $\mathscr{B}^i_{\ker}$ be a basis of $\ker H_1(\phi^{i}_{n-1})$. Then,
\begin{equation}
\label{BasisB}
\mathscr{B}^i = \mathscr{B}^i_{A} \cup \mathscr{B}^{i}_{\ker}
\end{equation}
is a basis of $H_1(C_{\bullet} \cosheaf{F}^i_{n-1})$.

The coset representative $b^*$ for each basis vector $\{ \langle b \rangle \} \in \mathscr{B}^i$ follows naturally from the coset representatives of $\mathscr{C}^i$. In other words, if $\mathscr{C}^i =\{ \, \{ \langle c_1 \rangle \} , \dots, \{ \langle c_l \rangle \} \, \}$ and $\{ \langle b \rangle \} \in \mathscr{B}^i$ is written as
\[ \{ \langle b \rangle \} =  d_1 \{ \langle c_1 \rangle \} + \dots + d_l \{ \langle c_l \rangle \}, \]
then let
\begin{equation}
\label{BasisBC}
b^* = d_1 c_1^* + \dots + d_l c_l^*
\end{equation}
be the coset representative of $\langle b \rangle$.
\item \textbf{Step 4. Define the set map $\Gamma_i : \mathscr{B}^i \to \bigoplus\limits_{v \in N_{\mathscr{V}}} C_n(\mathscr{R}^i_v)$. }

Given $\{ \langle b \rangle \} \in \mathscr{B}^i$, if $\{ \langle b \rangle \}$ is written as
\[ \{ \langle b \rangle \} = d_1 \{ \langle c_1 \rangle \} + \cdots + d_l \{ \langle c_l \rangle \}\]
where $\{ \langle c_1 \rangle \} \dots, \{ \langle c_l \rangle \}$ are basis $\mathscr{C}^i$, then define $\Gamma^i \{ \langle b \rangle \}$ as
\[ \Gamma^i \{ \langle b \rangle \} = d_1 \Gamma^i_{\mathscr{C}} \{ \langle c_1 \rangle \} + \cdots + d_l \Gamma^i_{\mathscr{C}} \{ \langle c_l \rangle \}. \]
One can check that
\begin{equation}
\label{KeyEq}
\partial \Gamma^i \{ \langle b \rangle \} = e^i_{n-1} b^*.
\end{equation}

\item \textbf{Step 5. Define the maps $\delta^i$ and $\psi^i$.}

For each $\{ \langle b \rangle \} \in \mathscr{B}^i_{\ker}$, let
\[ \delta^i \{ \langle b \rangle \} = \{ \langle -e^{i+1}_n \alpha^{i+1} + \ince^{i}_n \circ \Gamma^i \{ \langle b \rangle \} \, \rangle \} , \]
where $\alpha^{i+1} \in \bigoplus\limits_{e \in N_{\mathscr{V}}} C_n(\mathscr{R}^{i+1}_e)$ is any element satisfying $\partial \alpha^{i+1}=\ince^i_{n-1} b^*.$

Extend $\delta^i$ linearly to $\delta^i : \ker H_1(\phi^i_{n-1}) \to H_0(C_{\bullet} \cosheaf{F}^{i+1}_n)$. One can then extend $\delta^i$ to map $\psi^i : H_1(C_{\bullet} \cosheaf{F}^i_{n-1}) \to H_0(C_{\bullet} \cosheaf{F}^{i+1}_n)$ via Equation (\ref{PsiExtension}).

\end{itemize}

Going through Steps 1-5 defines $\delta^i$ and $\psi^i$ inductively for every $i$. We can then define the map
\[ \Psi^i : H_0(C_{\bullet} \cosheaf{F}^i_n) \oplus H_1(C_{\bullet} \cosheaf{F}^{i}_{n-1}) \to H_0(C_{\bullet} \cosheaf{F}^{i+1}_n) \oplus H_1(C_{\bullet} \cosheaf{F}^{i+1}_{n-1})\]
by
\begin{equation}
\label{PsiDefinition}
\Psi^{i} ( \{ \langle x \rangle \}, \{ \langle y \rangle \}) = (\,H_0(\phi^{i}_n) \{ \langle x \rangle \} + (-1)^{n+1} \psi^{i} \{ \langle y \rangle \} \,,\, H_1(\phi^{i}_{n-1}) \{ \langle y \rangle \} \,),
\end{equation}
where $H_0(\phi^i_n)$ and $H_1(\phi^{i}_{n-1})$ are the maps defined in Equation (\ref{InducedMaps}). We can then define the persistence module $\mathbb{V}_{\Psi}$
\begin{equation}
\label{PerPsi}
\mathbb{V}_{\Psi} : H_0(C_{\bullet} \cosheaf{F}^1_n) \oplus H_1(C_{\bullet} \cosheaf{F}^1_{n-1}) \xrightarrow{\Psi^1}  \cdots \xrightarrow{\Psi^{N-1}} H_0(C_{\bullet} \cosheaf{F}^N_n) \oplus H_1(C_{\bullet} \cosheaf{F}^N_{n-1}).
\end{equation}
\subsection{Isomorphism of persistence modules}
\label{IsoPersistenceModules}
We show that the persistence module $\mathbb{V}_{\Psi}$ constructed in Equation (\ref{PerPsi}) is isomorphic to the persistence module $\mathbb{V}$ from Equation (\ref{InterestPM}). In order to do so, we will show that both $\mathbb{V}_{\Psi}$ and $\mathbb{V}$ are isomorphic to the following persistence module
\[\mathbb{V}_{\total} : H_n(\total^1) \xrightarrow{\iota^1_{\total}} H_n(\total^2) \xrightarrow{\iota^2_{\total}} \cdots \xrightarrow{\iota^{N-1}_{\total}} H_n(\total^{N}), \]
where each $H_n(\total^i)$ is the homology of the double complex from Diagram \ref{Spectral0'} for parameter $\epsilon_i$, and $\iota^i_{\total}$ is the morphism induced by maps of double complexes.
\begin{equation}
\label{Spectral0'}
\begin{tikzcd}
 \arrow{d}{\partial} \vdots &  \arrow{d}{\partial} \vdots  \\
\dar{\partial} \bigoplus\limits_{v \in N_{\mathscr{V}}} C_2(\mathscr{R}^i_v) & \arrow{l}{e^i_2} \dar{\partial} \bigoplus\limits_{e \in N_{\mathscr{V}}} C_2( \mathscr{R}^i_e) \\
 \dar{\partial} \bigoplus\limits_{v \in N_{\mathscr{V}}} C_1( \mathscr{R}^i_v ) & \arrow{l}{e^i_1} \dar{\partial} \bigoplus\limits_{e \in N_{\mathscr{V}}} C_1(\mathscr{R}^i_e)\\
\bigoplus\limits_{v \in N_{\mathscr{V}}} C_0(\mathscr{R}^i_v) & \arrow{l}{e^i_0}  \bigoplus\limits_{e \in N_{\mathscr{V}}} C_0(\mathscr{R}^i_e) \\
\end{tikzcd}
\end{equation}
Let $H_n(\total^i)$ denote the homology of the double complex. Note that a coset of $H_n(\total^i)$ is represented by $[x,y]$, where $x \in \bigoplus\limits_{v \in N_{\mathscr{V}}} C_n(\mathscr{R}^i_v)$, $y \in \bigoplus\limits_{e \in N_{\mathscr{V}}} C_{n-1}(\mathscr{R}^i_e)$, $\partial y =0$ and $\partial x =(-1)^{n-1} e^i_{n-1} y$. A coset $[x,y]$ is trivial in $H_n(\total^i)$ if there exist $p_{n+1} \in \bigoplus\limits_{v \in N_{\mathscr{V}}} C_{n+1}(\mathscr{R}^i_v)$ and $q_n \in \bigoplus\limits_{e \in N_{\mathscr{V}}} C_n(\mathscr{R}^i_e)$ such that $\partial q_n =  y$ and $\partial p_{n+1}+ (-1)^{n+1} e^i_n (q_n) =x$.

Given increasing parameter values $( \epsilon_i)_{i=1}^N$, one can construct a double complex for each parameter $\epsilon_i$. There exists an inclusion map from double complex associated with parameter $\epsilon_i$ to that of parameter $\epsilon_{i+1}$, as illustrated in
Diagram \ref{SS0}. 
The vertical maps $\incv_n^i$ and $\ince_n^i$ constitute the inclusion maps of double complexes. Such inclusion of double complexes induces a morphism $\iota^i_{\total} :H_n(\total^i) \to H_n(\total^{i+1})$ which can be written explicitly as
\begin{equation}
\label{InducedTotalHomology}
\iota_{\total}^i ([x,y]) = [ \incv^i_n(x), \ince^i_{n-1}(y)].
\end{equation}

\begin{Lemma}
\label{Theorem2}
The persistence modules $\mathbb{V}_{\emph{Tot}}$ and $\mathbb{V}$ are isomorphic.
\end{Lemma}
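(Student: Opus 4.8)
The plan is to produce, for each parameter $\epsilon_i$, a single isomorphism $\Theta^i\colon H_n(\total^i)\to H_n(\mathscr{R}^i)$ and then to verify that the $\Theta^i$ intertwine the two families of structure maps, i.e.\ $\iota^i_*\circ\Theta^i=\Theta^{i+1}\circ\iota^i_{\total}$ for every $i$; the collection $(\Theta^i)_{i=1}^{N}$ is then, by definition, an isomorphism of persistence modules $\mathbb{V}_{\total}\xrightarrow{\ \sim\ }\mathbb{V}$. I would define $\Theta^i$ on representatives. Write $\mu^i\colon \bigoplus\limits_{v\in N_{\mathscr{V}}}C_n(\mathscr{R}^i_v)\to C_n(\mathscr{R}^i)$ for the sum of the chain maps induced by the inclusions $\mathscr{R}^i_v\hookrightarrow\mathscr{R}^i$, and set $\Theta^i[x,y]=[\mu^i(x)]$ for a coset $[x,y]$ of $H_n(\total^i)$ as described just before the statement (so $x\in\bigoplus_v C_n(\mathscr{R}^i_v)$, $y\in\bigoplus_e C_{n-1}(\mathscr{R}^i_e)$, $\partial y=0$, and $\partial x=(-1)^{n-1}e^i_{n-1}y$).

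First I would check that $\Theta^i$ is well defined. That $\mu^i(x)$ is a cycle in $C_n(\mathscr{R}^i)$ follows from $\partial\mu^i(x)=\mu^i(\partial x)=(-1)^{n-1}\mu^i(e^i_{n-1}y)$ together with the identity $\mu^i\circ e^i_{n-1}=0$: for an edge $e=\{v_1,v_2\}$ of $N_{\mathscr{V}}$ the map $e^i_{n-1}$ is, up to sign, the difference of the two inclusions $C_{n-1}(\mathscr{R}^i_e)\to C_{n-1}(\mathscr{R}^i_{v_1})$ and $C_{n-1}(\mathscr{R}^i_e)\to C_{n-1}(\mathscr{R}^i_{v_2})$, and both of these become the single inclusion $\mathscr{R}^i_e\hookrightarrow\mathscr{R}^i$ after composing with $\mu^i$, so they cancel. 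That $\Theta^i$ kills trivial cosets is the same computation: if $\partial q_n=y$ and $\partial p_{n+1}+(-1)^{n+1}e^i_n(q_n)=x$, then $\mu^i(x)=\partial\mu^i(p_{n+1})+(-1)^{n+1}\mu^i(e^i_n q_n)=\partial\mu^i(p_{n+1})$ is a boundary.

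Next I would argue that $\Theta^i$ is an isomorphism. This is exactly the comparison already invoked in the proof of Lemma~\ref{DiscreteCGNBound}: augment the double complex of Diagram~\ref{Spectral0'} on the left by the column $C_\bullet(\mathscr{R}^i)$ with augmentation $\mu^i$; the hypothesis $\epsilon_i<\epsilon_*$ guarantees (via Theorem~5.7 of \cite{CGNDiscrete13} and Appendix~\ref{A:DiscreteCGNBound}) that the Rips system covers $\mathscr{R}^i$ and that the rows of the augmented double complex are exact, hence its total complex is acyclic, which is precisely the statement that $\mu^i$ induces an isomorphism $H_n(\total^i)\xrightarrow{\sim}H_n(\mathscr{R}^i)$. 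Naturality is then immediate from the chain-level description: by Equation~(\ref{InducedTotalHomology}), $\iota^i_{\total}[x,y]=[\incv^i_n(x),\ince^i_{n-1}(y)]$, so $\Theta^{i+1}(\iota^i_{\total}[x,y])=[\mu^{i+1}(\incv^i_n(x))]$; since $\incv^i_n$ is the map on the vertex column induced by $\mathscr{R}^i_v\hookrightarrow\mathscr{R}^{i+1}_v$ and the composite $\mathscr{R}^i_v\hookrightarrow\mathscr{R}^{i+1}_v\hookrightarrow\mathscr{R}^{i+1}$ equals $\mathscr{R}^i_v\hookrightarrow\mathscr{R}^i\hookrightarrow\mathscr{R}^{i+1}$, one has $\mu^{i+1}\circ\incv^i_n=\iota^i_{\#}\circ\mu^i$ at the chain level, where $\iota^i_{\#}$ is the chain map of $\iota^i\colon\mathscr{R}^i\hookrightarrow\mathscr{R}^{i+1}$; passing to homology gives $\Theta^{i+1}\circ\iota^i_{\total}[x,y]=\iota^i_*[\mu^i(x)]=\iota^i_*\circ\Theta^i[x,y]$, as needed.

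There is no deep obstacle here: all the substance — in particular the exactness of the augmented rows, equivalently the acyclicity of the nerve-type complex attached to each simplex of $\mathscr{R}^i$ once the Rips system covers it — has already been established for Lemma~\ref{DiscreteCGNBound}, and the present lemma merely records that the comparison map used there is the filtered piece of a morphism of double complexes, hence natural in $i$. The only point wanting care is fixing the sign conventions for $\mu^i$ and for the horizontal maps $e^i_n$ compatibly, so that $\mu^i$ is simultaneously a chain map and satisfies $\mu^i\circ e^i_n=0$; with that in place the argument goes through as above.
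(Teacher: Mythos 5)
Your proposal is correct and follows essentially the same route as the paper: you define $\Theta^i[x,y]=[\mu^i(x)]$, which is exactly the paper's $\Phi^i_{\total}[x,y]=[j^i_n(x)]$, you obtain well-definedness from the Mayer--Vietoris short exact sequence (the identity $\mu^i\circ e^i_n=0$ is the exactness used in Appendix~\ref{ProofTheorem2}), and you establish commutativity of the ladder from the chain-level compatibility of inclusion maps, as in the paper. The one point of divergence is bijectivity: the paper proves it by an explicit diagram chase on the short exact sequence of rows (Appendix~\ref{ProofTheorem2}), whereas you invoke the general principle that augmenting the double complex by $C_\bullet(\mathscr{R}^i)$ and observing exact rows forces the total complex of the augmentation to be acyclic, hence $\mu^i$ to be a quasi-isomorphism. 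This is the same underlying fact, packaged as standard homological algebra rather than worked out by hand, and given that Lemma~\ref{DiscreteCGNBound} already secures coverage of $\mathscr{R}^i$ by the Rips system for $\epsilon_i<\epsilon_*$, your shortcut is sound.
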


\begin{proof}
We will define isomorphisms $\Phi^i_{\total} : H_n(\total^i) \to H_n(\mathscr{R}^i)$ such that the following diagram commutes.
\begin{equation}
\label{Diagram2}
\begin{tikzcd}
H_n(\total^1)  \arrow{d}{\Phi^1_{\total}} \arrow{r}{\iota^1_{\total}} &  H_n(\total^2) \arrow{d}{\Phi^{2}_{\total}} \arrow{r}{\iota^2_{\total}} & \cdots  \arrow{r}{\iota^{N-1}_{\total}} & H_n(\total^N) \arrow{d}{\Phi^N_{\total}} \\
H_n(\mathscr{R}^1)   \arrow{r}{\iota^1_*} & H_n(\mathscr{R}^{2})  \arrow{r}{\iota^2_*}  & \cdots \arrow{r}{\iota^{N-1}_*} & H_n(\mathscr{R}^{N})
\end{tikzcd}
\end{equation}

For each parameter $\epsilon_i$, let $j^i_n : \bigoplus\limits_{v \in N_{\mathscr{V}}} C_n(\mathscr{R}^i_v) \to C_n(\mathscr{R}^i)$ be a collection of inclusion maps. Define $\Phi^i_{\total}$ by

\begin{equation}
\label{PsiTotal}
\Phi^i_{\total} ([x,y])=[j^i_n(x)].
\end{equation}
One can check that $\Phi^i_{\total}$ is well-defined and bijective (Appendix \ref{ProofTheorem2}).

Given $[x, y] \in H_n(\total^i)$, note that
\[ \iota^i_* \circ \Phi^i_{\total} [x,y]= \iota^i_* [ j^i_n(x) ] = [\iota^i \circ j^i_n(x)],\]
\[ \Phi^{i+1}_{\total} \circ \iota^i_{\total} [x,y]= \Phi^{i+1}_{\total} [\incv^i_n ( x) , \ince^i_{n-1}(y)] = [j^{i+1}_n \circ \incv^i_n(x)].\]
Then, $\iota^i_* \circ \Phi^i_{\total} = \Phi^{i+1}_{\total} \circ \iota^i_{\total}$ because all the maps involved are inclusion maps. Thus, Diagram \ref{Diagram2} commutes.
\end{proof}

We now show that $\mathbb{V}_{\Psi}$ and $\mathbb{V}_{\total}$ are isomorphic persistence modules. Recall that the persistence module $\mathbb{V}_{\Psi}$ is defined as
\begin{equation}
\mathbb{V}_{\Psi} : H_0(C_{\bullet} \cosheaf{F}^1_n) \oplus H_1(C_{\bullet} \cosheaf{F}^1_{n-1}) \xrightarrow{\Psi^1}  \cdots \xrightarrow{\Psi^{N-1}} H_0(C_{\bullet} \cosheaf{F}^N_n) \oplus H_1(C_{\bullet} \cosheaf{F}^N_{n-1}),
\end{equation}
where the maps $\Psi^i$ are defined as
\begin{equation}
\Psi^{i} ( \{ \langle x \rangle \}, \{ \langle y \rangle \}) = (\,H_0(\phi^{i}_n) \{ \langle x \rangle \} + (-1)^{n+1} \psi^{i} \{ \langle y \rangle \} \,,\, H_1(\phi^{i}_{n-1}) \{ \langle y \rangle \} \,),
\end{equation}
where $H_0(\phi^i_n)$ and $H_1(\phi^{i}_{n-1})$ are the maps defined in Equation (\ref{InducedMaps}). Recall from Equation (\ref{H1Decomposition}) that every $\{ \langle y \rangle \} \in H_1(C_{\bullet} \cosheaf{F}^i_{n-1})$ can be expressed explicitly as $\{ \langle y \rangle \} = \{ \langle y_1 \rangle \} + \{ \langle y_2 \rangle \}$, where $\{ \langle y_1 \rangle \} \in A^i$ and $\{ \langle y_2 \rangle \} \in \ker H_1(\phi^i_{n-1})$. Then, we can express the map $\Psi^i$ explicitly using maps from Diagram \ref{SS0} and $\delta^i$ as the following
\begin{equation}
\label{PsiExplicit}
\Psi^{i} ( \{ \langle x \rangle \}, \{ \langle y \rangle \}) = (\, \{ \langle \incv^i_n x \rangle \} + (-1)^{n+1} \delta^i \{ \langle y_2 \rangle\}  \,,\,\{ \langle \ince^i_n y \rangle \} \,)
\end{equation}

\begin{Lemma}
\label{Theorem1}
The persistence modules $\mathbb{V}_{\Psi}$ and $\mathbb{V}_{\emph{Tot}}$ are isomorphic.
\end{Lemma}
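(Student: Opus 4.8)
The plan is to construct, for each parameter $\epsilon_i$, an explicit isomorphism $\Theta^i : H_0(C_{\bullet}\cosheaf{F}^i_n) \oplus H_1(C_{\bullet}\cosheaf{F}^i_{n-1}) \to H_n(\total^i)$ and then verify that these intertwine the maps $\Psi^i$ and $\iota^i_{\total}$, i.e. that the squares
\[
\begin{tikzcd}
H_0(C_{\bullet}\cosheaf{F}^i_n)\oplus H_1(C_{\bullet}\cosheaf{F}^i_{n-1}) \arrow{d}{\Psi^i}\arrow{r}{\Theta^i} & H_n(\total^i)\arrow{d}{\iota^i_{\total}}\\
H_0(C_{\bullet}\cosheaf{F}^{i+1}_n)\oplus H_1(C_{\bullet}\cosheaf{F}^{i+1}_{n-1}) \arrow{r}{\Theta^{i+1}} & H_n(\total^{i+1})
\end{tikzcd}
\]
commute. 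The map $\Theta^i$ is essentially the edge map of the double-complex spectral sequence: given $\{\langle x\rangle\}\in H_0(C_{\bullet}\cosheaf{F}^i_n)$ with $x\in\bigoplus_v C_n(\mathscr{R}^i_v)$ a cycle representative (so $\partial x=0$), send it to $[x,0]\in H_n(\total^i)$; given $\{\langle y\rangle\}\in H_1(C_{\bullet}\cosheaf{F}^i_{n-1})$, pick the coset representative $y$ (with $\partial y=0$ in the $e$-column and $\partial^i_{n-1}\langle y\rangle=0$) and use the chosen $\Gamma^i\{\langle y\rangle\}=\beta^i\in\bigoplus_v C_n(\mathscr{R}^i_v)$ with $\partial\beta^i=e^i_{n-1}y$ from \S\ref{Construction} to form the class $[(-1)^{?}\,\incv\text{-lift},\ y]$; concretely $\Theta^i(\{\langle x\rangle\},\{\langle y\rangle\}) = [x + (-1)^{n+1}\Gamma^i\{\langle y\rangle\}^{\,\flat},\ y]$, where the sign and the precise combination are forced by the double-complex differential condition $\partial x' = (-1)^{n-1}e^i_{n-1}y$. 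The first task is to check $\Theta^i$ is well-defined: independence of the cycle representative of $\{\langle x\rangle\}$ and of the choices hidden in $\{\langle y\rangle\}$ amounts to chasing the definition of triviality in $H_n(\total^i)$ against the definitions of $H_0$ and $H_1$ cosheaf homology, and here one uses crucially that $\Gamma^i$ was built to satisfy Equation~(\ref{KeyEq}), $\partial\Gamma^i\{\langle b\rangle\}=e^i_{n-1}b^*$, consistently on a basis.

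Next I would establish bijectivity of $\Theta^i$. Since the double complex has only two columns, its spectral sequence degenerates at $E_2$, giving a short exact sequence $0\to H_0(C_{\bullet}\cosheaf{F}^i_n)\to H_n(\total^i)\to H_1(C_{\bullet}\cosheaf{F}^i_{n-1})\to 0$ (this is exactly the content underlying Lemma~\ref{DiscreteCGNBound} and Theorem~5.7 of \cite{CGNDiscrete13}); the decomposition $H_1(C_{\bullet}\cosheaf{F}^i_{n-1})=A^i\oplus\ker H_1(\phi^i_{n-1})$ provides a splitting, and $\Theta^i$ is precisely the resulting isomorphism from the direct sum onto $H_n(\total^i)$. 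Injectivity: if $\Theta^i(\{\langle x\rangle\},\{\langle y\rangle\})=0$, then projecting to the $E_2^{0,1}$-quotient kills $y$ in $H_1$, whence $\{\langle y\rangle\}=0$, and then $[x,0]=0$ forces $\{\langle x\rangle\}=0$ by the filtration argument. Surjectivity: any $[x',y']\in H_n(\total^i)$ has $y'$ a cycle representing some $\{\langle y\rangle\}$, and subtracting $\Theta^i(0,\{\langle y\rangle\})$ lands in the image of $H_0(C_{\bullet}\cosheaf{F}^i_n)$. The routine parts — verifying these are honest maps on the nose and computing the dimension count — I would relegate to an appendix (the paper already points to Appendices \ref{DeltaWellDefined}, \ref{LinearIndependence}, \ref{ProofTheorem2}).

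Finally, and this is the heart of the matter, I would verify commutativity of the square. Chasing $(\{\langle x\rangle\},\{\langle y\rangle\})$ along the top-right: $\Theta^i$ produces $[x+(-1)^{n+1}\Gamma^i\{\langle y\rangle\}^{\flat},y]$, and $\iota^i_{\total}$ applies $\incv^i_n$ to the first slot and $\ince^i_{n-1}$ to the second (Equation~(\ref{InducedTotalHomology})), giving $[\incv^i_n x + (-1)^{n+1}\incv^i_n\Gamma^i\{\langle y\rangle\}^{\flat},\ \ince^i_{n-1}y]$. Chasing along the left-bottom: $\Psi^i$ produces, by Equation~(\ref{PsiExplicit}), $(\{\langle\incv^i_n x\rangle\}+(-1)^{n+1}\delta^i\{\langle y_2\rangle\},\ \{\langle\ince^i_n y\rangle\})$, and $\Theta^{i+1}$ turns this into a class in $H_n(\total^{i+1})$ whose $e$-slot is $\ince^i_{n-1}y$ (matching) and whose $v$-slot is $\incv^i_n x$ plus $(-1)^{n+1}$ times the representative of $\delta^i\{\langle y_2\rangle\}$ plus $(-1)^{n+1}$ times $\Gamma^{i+1}$ applied to $\{\langle\ince^i_n y\rangle\}$. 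The two must agree modulo a boundary in $H_n(\total^{i+1})$. This reduces to the identity, in $H_0(C_{\bullet}\cosheaf{F}^{i+1}_n)$,
\[
\{\langle \incv^i_n\,\Gamma^i\{\langle y\rangle\}\rangle\} \;=\; \delta^i\{\langle y_2\rangle\} \;+\; \{\langle\Gamma^{i+1}\{\langle\ince^i_n y\rangle\}\rangle\},
\]
which is exactly what the inductive Step~2 construction of $\Gamma^{i+1}_{\mathscr{C}}$ was engineered to produce: on the image part $\mathscr{C}^{i+1}_{\im}$ we set $\Gamma^{i+1}_{\mathscr{C}}=\incv^i_n\Gamma^i$ so the $y_1\in A^i$ contribution cancels exactly, while on the $\ker$ part the term $-e^{i+1}_n\alpha^{i+1}$ in the definition (\ref{RealDelta}) of $\delta^i$ supplies precisely the discrepancy $\ince^i_n - (\text{double-complex correction})$ as a boundary. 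I expect the main obstacle to be bookkeeping the signs and the coset-representative dependence here — making sure that "$\{\langle y\rangle\}^{\flat}$" is chosen to be the same representative $y^*$ used in defining $\Gamma^i$ and $\delta^i$, and that the $E_2$-degeneration splitting is compatible with the $A^i\oplus\ker$ decomposition at every stage of the induction — rather than any deep conceptual difficulty; once the consistency built into \S\ref{Construction} is invoked, the diagram chase closes. Therefore $\mathbb{V}_{\Psi}\cong\mathbb{V}_{\total}$, and combined with Lemma~\ref{Theorem2} this will also give the main Theorem.
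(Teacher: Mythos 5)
Your map $\Theta^i$ is the same as the paper's $\Phi^i$: the $H_0$-part is sent to $[x,0]$ and, on the fixed basis $\mathscr{B}^i$, the $H_1$-part to $[(-1)^{n+1}\Gamma^i\{\langle b\rangle\},\,b^*]$, extended linearly. The overall plan (define an isomorphism level by level, then chase the square against $\Psi^i$ and $\iota^i_{\total}$) is also the paper's. Your use of two-column $E_2$-degeneration to package bijectivity is a reasonable stylistic variant of the direct argument the paper carries out in Appendix~\ref{ProofTheorem1}.

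The gap is in the commutativity chase. The ``identity in $H_0(C_{\bullet}\cosheaf{F}^{i+1}_n)$'' you reduce to is not well-posed: neither $\incv^i_n\Gamma^i\{\langle y\rangle\}$ nor $\Gamma^{i+1}\{\langle\ince^i_{n-1}y\rangle\}$ is a cycle in the $v$-column $\bigoplus_v C_n(\mathscr{R}^{i+1}_v)$ unless $\ince^i_{n-1}y^*=0$, since by Equation~(\ref{KeyEq}) one has $\partial\,\Gamma^i\{\langle b\rangle\}=e^i_{n-1}b^*\neq 0$ in general, and hence $\partial\,\incv^i_n\Gamma^i\{\langle b\rangle\}=e^{i+1}_{n-1}\ince^i_{n-1}b^*$. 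So the classes $\{\langle\cdot\rangle\}\in H_0(C_{\bullet}\cosheaf{F}^{i+1}_n)$ on either side of your displayed equation do not exist. What is actually true is a total-complex statement, and it genuinely requires splitting into the two cases $\mathscr{B}^i_A$ and $\mathscr{B}^i_{\ker}$. On $\{\langle b\rangle\}\in\mathscr{B}^i_{\ker}$, the two images in $H_n(\total^{i+1})$ differ by $[(-1)^{n+1}e^{i+1}_n\alpha^{i+1},\,\ince^i_{n-1}b^*]$; this pair has a nonzero $e$-slot and cannot be interpreted in $H_0$, but it is a total-complex boundary precisely because $\partial\alpha^{i+1}=\ince^i_{n-1}b^*$ (take $q_n=\alpha^{i+1}$ and $p_{n+1}=0$ in the triviality condition for $H_n(\total^{i+1})$). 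On $\{\langle b\rangle\}\in\mathscr{B}^i_A$, the two images are literally equal chain-level pairs, via the $\mathscr{C}^{i+1}_{\im}$ clause $\Gamma^{i+1}_{\mathscr{C}}\{\langle\ince^i_{n-1}b\rangle\}=\incv^i_n\Gamma^i\{\langle b\rangle\}$. You did identify both ingredients in words, so the conceptual picture is right, but the formal reduction you wrote would not survive a literal diagram chase; the $e$-slot discrepancy must be absorbed as a total-complex boundary, not cancelled at the $H_0$ level.
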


\begin{proof}
We will define isomorphisms $\Phi^i : H_0(C_{\bullet} \cosheaf{F}^i_n) \oplus H_1(C_{\bullet} \cosheaf{F}^i_{n-1}) \to H_n(\total^i)$ such that the following diagram commutes.
\begin{equation}
\label{DiagramFirst}
\begin{tikzcd}
H_0(C_{\bullet} \cosheaf{F}^1_n) \oplus H_1(C_{\bullet} \cosheaf{F}^1_{n-1})  \arrow{d}{\Phi^1} \arrow{r}{\Psi^1} &  \cdots  \arrow{r}{\Psi^{N-1}} & H_0(C_{\bullet} \cosheaf{F}^N_n) \oplus H_1(C_{\bullet} \cosheaf{F}^N_{n-1}) \arrow{d}{\Phi^N} \\
H_n(\total^1)   \arrow{r}{\iota^1_{\total}} & \cdots \arrow{r}{\iota^{N-1}_{\total}} & H_n(\total^N)
\end{tikzcd}
\end{equation}

We will define $\Phi^i:H_0(C_{\bullet} \cosheaf{F}^i_n) \oplus H_1(C_{\bullet} \cosheaf{F}^i_{n-1}) \to H_n(\total^i)$ by first defining
\begin{align}
\Phi^i_0: H_0(C_{\bullet} \cosheaf{F}^i_n) \to H_n( \total^i) \label{PhiZero}, \\
\Phi^i_1: H_1(C_{\bullet} \cosheaf{F}^i_{n-1}) \to H_n(\total^i) \label{PhiOne}.
\end{align}

Define $\Phi^i_0: H_0(C_{\bullet} \cosheaf{F}^i_n) \to H_n( \total^i)$ by
\begin{equation}
\label{Phi0}
\Phi^i_0 ( \{ \langle x \rangle \} ) = [ x, 0 ]. 
\end{equation}
To define the map $\Phi^i_1$, we will define $\Phi^i_1$ on the basis $\mathscr{B}^i$ of $H_1(C_{\bullet} \cosheaf{F}^i_{n-1})$. Recall the fixed basis $\mathscr{B}^i$ of $H_1(C_{\bullet} \cosheaf{F}^i_{n-1})$ in Equation (\ref{BasisB}). For each basis $\{ \langle b \rangle \} \in \mathscr{B}^i$, let
\begin{equation}
\label{Phi1}
\Phi^i_1 ( \{ \langle b \rangle \} )= [ (-1)^{n+1} \Gamma^i \{ \langle b \rangle \}, b^* ], 
\end{equation}
where the coset representative $b^*$ and the set map $\Gamma^i$ are defined according to the construction in \S\ref{Construction}. Extend $\Phi^i_1$ linearly to $H_1(C_{\bullet} \cosheaf{F}^i_{n-1})$.

We can now define $\Phi^i:H_0(C_{\bullet} \cosheaf{F}^i_n) \oplus H_1(C_{\bullet} \cosheaf{F}^i_{n-1}) \to H_n(\total^i)$ by
\begin{equation}
\label{Phi01}
\Phi^i( \{ \langle x \rangle \}, \{ \langle y \rangle \}) = \Phi^i_0(\{\langle x \rangle \}) + \Phi^i_1 ( \{ \langle y \rangle \} ).
\end{equation}
One can check that $\Phi^i$ is well-defined and bijective (Appendix \ref{ProofTheorem1}).

To show that Diagram \ref{DiagramFirst} commutes, it suffices to show that the following diagram commutes for each element of $H_0(C_{\bullet} \cosheaf{F}^i_n)$ and $H_1(C_{\bullet} \cosheaf{F}^i_{n-1})$.
\begin{equation}
\label{Commute1}
\begin{tikzcd}
H_0(C_{\bullet} \cosheaf{F}^i_n) \oplus H_1(C_{\bullet} \cosheaf{F}^i_{n-1}) \arrow{d}{\Phi^i} \arrow{r}{\Psi^i} & H_0(C_{\bullet} \cosheaf{F}^{i+1}_n) \oplus H_1(C_{\bullet} \cosheaf{F}^{i+1}_{n-1}) \arrow{d}{\Phi^{i+1}} \\
H_n(\total^i) \arrow{r}{\iota^i_{\total}} & H_n(\total^{i+1})
\end{tikzcd}
\end{equation}

\textbf{Case 1}: Given $\{ \langle x \rangle \} \in H_0(C_{\bullet} \cosheaf{F}^i_n)$, we know from Equations (\ref{InducedTotalHomology}) and (\ref{Phi0}) that
\[ \iota^i_{\total} \circ \Phi^i (\{ \langle x \rangle \},0) = \iota^i_{\total}([x,0]) = [ \incv^i_n x, 0].\]
On the other hand, we know from Equations (\ref{PsiDefinition}) and (\ref{Phi0}) that
\[ \Phi^{i+1} \circ \Psi^i(\{ \langle x \rangle \},0) =\Phi^{i+1}(H_0(\phi^i_n)(x),0)= \Phi^{i+1} ( \{ \langle \incv^i_n x \rangle \},0) =[ \incv^i_n x,0].\]
Thus, the Diagram \ref{Commute1} commutes for every $\{ \langle x \rangle \} \in H_0(C_{\bullet} \cosheaf{F}^i_n)$.

\textbf{Case 2}: To show that Diagram \ref{Commute1} commutes for every vector in $H_1(C_{\bullet} \cosheaf{F}^i_{n-1})$, we will show that Diagram \ref{Commute1} commutes for every basis element $\{ \langle b \rangle \} \in \mathscr{B}^i$ of $H_1(C_{\bullet} \cosheaf{F}^i_{n-1})$. Recall that $\mathscr{B}^i = \mathscr{B}^i_A \cup \mathscr{B}^i_{\ker}$. We consider two cases separately: the first, if $\{ \langle b \rangle \} \in \mathscr{B}^i_{\ker}$, and the second, if $\{ \langle b \rangle \} \in \mathscr{B}^i_A$.

\textbf{Case 2A}: Assume $\{ \langle b \rangle \} \in \mathscr{B}^i_{\ker}$. We know from Equations (\ref{InducedTotalHomology}) and (\ref{Phi1}) that
\[ \iota^i_{\total} \circ \, \Phi^i (0, \{ \langle b \rangle \}) = \iota^i_{\total}([(-1)^{n+1} \Gamma^i \{ \langle  b \rangle \}, b^* ]) = [(-1)^{n+1} \incv^i_n \circ \Gamma^i \{ \langle b \rangle \}, \ince^i_{n-1}b^*]. \]
On the other hand, from Equations (\ref{PsiExplicit}) and (\ref{Phi1}),
\begin{align*}
\Phi^{i+1} \circ \Psi^i ( 0, \{ \langle b \rangle \}) &= \Phi^{i+1} ((-1)^{n+1} \delta^i \{ \langle b \rangle \} , 0) \\
&= \Phi^{i+1}((-1)^{n+1} \{ \langle -e^{i+1}_n \alpha^{i+1} + \incv^i_n  \circ \Gamma^i \{ \langle b \rangle \}  \, \rangle \}, 0 )\\
&= [ -(-1)^{n+1} e^{i+1}_n \alpha^{i+1} + (-1)^{n+1}\incv^i_n \circ \Gamma^i \{ \langle  b \rangle \},0].
\end{align*}
Then,
\[ \iota^i_{\total} \circ \Phi^i (0, \{ \langle b \rangle \} )- \Phi^{i+1} \circ \Psi^i (0, \{ \langle b \rangle \} )= [(-1)^{n+1}e^{i+1}_n \alpha^{i+1}, \ince^i_{n-1} b^* ]. \]
Recall from Equation (\ref{Alpha}) that $\partial \alpha^{i+1}= \ince^i_{n-1} b^*$. Thus, $\iota^i_{\total} \circ \Phi^i ( 0, \{ \langle b \rangle \} ) - \Phi^{i+1} \circ \Psi^i (0, \{ \langle b \rangle \})$ is trivial in $H_n(\total^{i+1})$, and Diagram \ref{Commute1} commutes for $\{ \langle b \rangle \} \in \mathscr{B}^i_{\ker}$.  

\textbf{Case 2B}: If $\{ \langle b \rangle \} \in \mathscr{B}^i_A$, then again by Equations (\ref{InducedTotalHomology}) and (\ref{Phi1}),
\[ \iota^i_{\total} \circ \, \Phi^i (0, \{ \langle b \rangle \}) = \iota^i_{\total}([(-1)^{n+1} \Gamma^i \{ \langle b \rangle \} , b^* ]) = [(-1)^{n+1} \incv^i_n \circ \Gamma^i \{ \langle b \rangle \}, \ince^i_{n-1} b^* ]. \]

On the other hand, from Equations (\ref{PsiExplicit}) and (\ref{Phi1}),
\begin{align*}
\Phi^{i+1} \circ \Psi^i ( 0, \{ \langle b \rangle \}) &= \Phi^{i+1}(0, \{ \langle \ince^i_{n-1} b \rangle \} ) \\
&= \Phi^{i+1}_1 ( \{ \langle \ince^i_{n-1} b \rangle \} ) \\
&= [(-1)^{n+1} \Gamma^{i+1} \{ \langle \ince^i_{n-1} b \rangle \}, \ince^i_{n-1} b^* ] \\
&= [(-1)^{n+1} \incv^i_n \circ \, \Gamma^i \{ \langle b \rangle \}, \ince^i_{n-1} b^*]
\end{align*}

The last equality follows from the construction of $\Gamma^{i+1}$ in Equation (\ref{GammaOnC}). Thus, the diagram commutes for $\{ \langle b \rangle \} \in \mathscr{B}_A^i$.


Thus, Diagram \ref{DiagramFirst} commutes.
\end{proof}

Lemma \ref{Theorem2} and \ref{Theorem1} yield the desired result.

\begin{Theorem}
\label{MainThm}
The persistence modules $\mathbb{V}_{\Psi}$ and $\mathbb{V}$ are isomorphic.
\end{Theorem}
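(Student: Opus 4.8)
The plan is to simply compose the two isomorphisms already established and invoke transitivity of isomorphism for persistence modules. Concretely, Lemma \ref{Theorem1} supplies, for each $i$, a vector-space isomorphism
\[
\Phi^i : H_0(C_{\bullet}\cosheaf{F}^i_n) \oplus H_1(C_{\bullet}\cosheaf{F}^i_{n-1}) \to H_n(\total^i),
\]
and shows that the squares built from the $\Psi^i$ (top row) and the $\iota^i_{\total}$ (bottom row) commute, i.e.\ the collection $(\Phi^i)$ is an isomorphism of persistence modules $\mathbb{V}_{\Psi} \xrightarrow{\ \sim\ } \mathbb{V}_{\total}$. Likewise, Lemma \ref{Theorem2} supplies isomorphisms $\Phi^i_{\total} : H_n(\total^i) \to H_n(\mathscr{R}^i)$ making the squares with $\iota^i_{\total}$ and $\iota^i_*$ commute, so $(\Phi^i_{\total})$ is an isomorphism of persistence modules $\mathbb{V}_{\total} \xrightarrow{\ \sim\ } \mathbb{V}$.

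The step I would carry out is then to set $\Theta^i := \Phi^i_{\total} \circ \Phi^i$. Each $\Theta^i$ is a composition of isomorphisms, hence an isomorphism of vector spaces. Pasting the two commuting squares along the shared column $H_n(\total^i)$ — Diagram \ref{Commute1} on top and Diagram \ref{Diagram2} on the bottom — gives, for every $i$, the identity $\iota^i_* \circ \Theta^i = \Phi^i_{\total}\circ \iota^i_{\total}\circ \Phi^i = \Phi^{i+1}_{\total}\circ \Phi^{i+1}\circ \Psi^i = \Theta^{i+1}\circ \Psi^i$. Therefore $(\Theta^i)$ is a morphism of persistence modules $\mathbb{V}_{\Psi}\to\mathbb{V}$ whose components are all isomorphisms, which by the definition of isomorphism of persistence modules given in \S\ref{sec:PH} is exactly an isomorphism $\mathbb{V}_{\Psi}\cong\mathbb{V}$.

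There is essentially no obstacle here: the substantive work has all been discharged in Lemmas \ref{Theorem1} and \ref{Theorem2} (and, through them, in the spectral-sequence construction of $\delta^i$ and the inductive bookkeeping of \S\ref{Construction}). The only thing to be slightly careful about is that the indexing and the diagrams match up so that the two families of squares genuinely share the column $H_n(\total^i)$ for each $i$ and the pasted square is the one with maps $\Psi^i$ and $\iota^i_*$; once that is observed, the conclusion is immediate. I would keep the write-up to a couple of sentences, stating the composite $\Theta^i = \Phi^i_{\total}\circ\Phi^i$ and noting that commutativity and bijectivity are inherited from the two lemmas.
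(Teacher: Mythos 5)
Your proposal is correct and is exactly the paper's argument: the paper proves Lemma \ref{Theorem2} ($\mathbb{V}_{\total}\cong\mathbb{V}$) and Lemma \ref{Theorem1} ($\mathbb{V}_{\Psi}\cong\mathbb{V}_{\total}$) and then states Theorem \ref{MainThm} as an immediate consequence of composing them. Your explicit definition $\Theta^i = \Phi^i_{\total}\circ\Phi^i$ and verification of the pasted commuting squares simply spells out what the paper leaves implicit.
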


\section{Application: Multiscale Persistence}
\label{sec:multi}

There are a number of potential uses for distributed persistent homology computations. Perhaps scalable decentralized computation for large data sets is the most obvious: here the partition of the data set into patches is based on localization via coordinates (this is what appears in \cite{CasasDistributing19,LMParallel15}). However, even among small data sets, there are reasons for distributing the computation along a partition of the point cloud based on scalar fields other than coordinates. Data often comes with additional features, such as density estimates, distance to a landmark, and time dependence, that one may wish to examine. In this section, we apply the distributed computation method from \S\ref{sec:DistributedPH} to point cloud data based on density as a parameter. Section \ref{MultiscaleGeneral} provides a general framework for multiscale persistence, and Example \ref{DataDensity} illustrates the advantage of multiscale analysis when examining dataset with varying density. In such situations, multiscale persistent homology allows the user to detect significant features that are overlooked by standard persistent homology methods.

\subsection{Multiscale Barcode Annotation}
\label{MultiscaleGeneral}

We provide a general framework for computing multiscale persistent homology. Given a point cloud $P$, let $f: P \to \mathbb{R}$ be a (user-chosen)
density estimate. Construct a cover $\mathscr{V}$ of $f(P)$
with nerve $N_{\mathscr{V}}$ a compact 
interval.
Let $\mathbb{V}$ denote the persistence module
\begin{equation}
\label{UsualPH}
\mathbb{V} : H_n(\mathscr{R}^1) \to H_n(\mathscr{R}^2) \to \dots \to H_n(\mathscr{R}^N)
\end{equation}
in the usual sense. Let $\barcode(\mathbb{V})$ denote the barcode of $\mathbb{V}$. If a bar of the barcode represents a feature $\gamma$ that consists of points in $f^{-1}(U)$ for some $U \in \mathscr{V}$, we say that the feature $\gamma$ lives in $U$. Moreover, we can annotate the bar with its corresponding set $U$. The goal of multiscale persistent homology is to annotate the bars of $\barcode(\mathbb{V})$ with their corresponding sets $U$ of $\mathscr{V}$.

An algorithmic summary of the annotation process is provided, followed by a detailed explanation of each step.

\begin{algorithm}
\caption{Annotate $\barcode(\mathbb{V})$.}
\label{TheAlgorithm}
\begin{algorithmic}[1]
    \State Compute persistence module $\mathbb{V}_*$ using distributed computation.
    \State Label vector spaces of $\mathbb{V}_*$.
    \State  For each persistence module $\mathbb{W}_s$ of $\mathbb{V}_*=\bigoplus\limits_s \mathbb{W}_s$, annotate $\barcode(\mathbb{W}_s)$.
    \State Using the annotations of $\barcode(\mathbb{V}_*)$, annotate $\barcode(\mathbb{V})$.
    \State Return annotated $\barcode(\mathbb{V})$.
\end{algorithmic}
\end{algorithm}

\subsubsection*{Step 1. Compute persistence module $\mathbb{V}_*$}

Let $\mathbb{V}$ denote the persistence module of interest
\begin{equation}
\label{UsualPH2}
\mathbb{V} : H_n(\mathscr{R}^1) \to H_n(\mathscr{R}^2) \to \dots \to H_n(\mathscr{R}^N).
\end{equation}

Let $\epsilon_*$ be the upper bound from Lemma \ref{DiscreteCGNBound} such that
\[H_n(\mathscr{R}^{\epsilon}) \cong H_0(C_{\bullet} \cosheaf{F}^{\epsilon}_{n}) \oplus H_1(C_{\bullet} \cosheaf{F}^{\epsilon}_{n-1})\]
for all $\epsilon < \epsilon_*$. Let $\mathbb{V}|_L$ denote the sequence of vector spaces and maps of $\mathbb{V}$ up to parameter $\epsilon_L $ such that $\epsilon_L< \epsilon_*$:
\[\mathbb{V} |_L : H_n(\mathscr{R}^1) \to H_n(\mathscr{R}^2) \to \dots \to H_n(\mathscr{R}^L).\]
We can compute the persistence module
\begin{equation}
\label{TruncatedDistributed}
\mathbb{V}_{\Psi} : H_0(C_{\bullet} \cosheaf{F}^1_n)\oplus H_1(C_{\bullet} \cosheaf{F}^1_{n-1}) \xrightarrow{\Psi^1} \dots \xrightarrow{\Psi^{L-1}} H_0(C_{\bullet} \cosheaf{F}^L_n)\oplus H_1(C_{\bullet} \cosheaf{F}^L_{n-1})
\end{equation}
isomorphic to $\mathbb{V}|_L$ using the distributed computation method from \S\ref{sec:DistributedPH}.

We will in fact compute a persistence module $\mathbb{V}_*\cong\mathbb{V}_{\Psi}$ that can reveal additional information about the barcode. Recall from \S\ref{sec:CellularCosheaves} that each cosheaf $\cosheaf{F}^{i}_n$ can be decomposed as $\cosheaf{F}^{i}_n \cong \oplus \cosheaf{I}^{i}_n$, where each $\cosheaf{I}^i_n$ is an indecomposable cosheaf over $N_{\mathscr{V}}$. In other words, there exists an isomorphism of cosheaves
\begin{equation}
\label{IsoCosheafMorphism}
D^i_n : \cosheaf{F}^i_n \to \oplus \cosheaf{I}^i_n.
\end{equation}

For each parameter $\epsilon_i$, there exists an isomorphism 
\[g^i: H_0(C_{\bullet} \cosheaf{F}^i_n)\oplus H_1(C_{\bullet} \cosheaf{F}^i_{n-1}) \to H_0(C_{\bullet} \oplus \cosheaf{I}^i_n) \oplus H_1(C_{\bullet} \cosheaf{F}^i_{n-1})\]
defined by
\[ g^i ( \, \{ \langle x \rangle \}, \{ \langle y \rangle \} \,) = (\, H_0(D^i_n) \{ \langle x \rangle \}, \{ \langle y \rangle \} \, ),\]
where $H_0(D^i_n) : H_0(C_{\bullet} \cosheaf{F}^i_n) \to H_0(C_{\bullet} \oplus \cosheaf{I}^i_n)$ is the isomorphism induced by $D^i_n$.

Define the persistence module $\mathbb{V}_*$ by
\[\mathbb{V}_* : H_0(C_{\bullet} \oplus \cosheaf{I}^{1}_n) \oplus H_1(C_{\bullet} \cosheaf{F}^{1}_{n-1}) \xrightarrow{\Psi^1_*} \cdots \xrightarrow{\Psi^{N-1}_*} H_0(C_{\bullet} \oplus \cosheaf{I}^{N}_n) \oplus H_1(C_{\bullet} \cosheaf{F}^{N}_{n-1}), \]
where the map $\Psi^i_*$ is defined by $\Psi^i_* = g^{i+1} \circ \Psi^i \circ (g^i)^{-1}$.

By construction, $\mathbb{V}_*$ is isomorphic to the persistence module $\mathbb{V}_{\Psi}$ and hence isomorphic to $\mathbb{V}|_L$. Our mechanism of decomposing the cosheaf $\cosheaf{F}^{i}_n$ into indecomposable cosheaves may seem like a cumbersome step. However, such decomposition allows us to understand the cosheaf homologies in terms of the indecomposable cosheaves $\cosheaf{I}^i_n$.

\subsubsection*{Step 2. Label the vector spaces of $\mathbb{V}_*$}

For any parameter $\epsilon_i$, recall that
\[\mathbb{V}_*^{i} = H_0(C_{\bullet} \oplus \cosheaf{I}^{i}_n) \oplus H_1(C_{\bullet} \cosheaf{F}^{i}_{n-1}).\]
By Lemma \ref{IndecomposableCosheaf}, each component of $H_0(C_{\bullet} \oplus \cosheaf{I}^i_n)$ corresponds to an indecomposable cosheaf of the form $\cosheaf{I}^i_{[-]}$.
We can thus annotate each component of $H_0(C_{\bullet} \oplus \cosheaf{I}^i_n)$ according the support of the corresponding indecomposable $\cosheaf{I}^i_{[-]}$.

\begin{enumerate}[label = \textit{Case \arabic*}: , leftmargin=*, align=left]
\item If the indecomposable $\cosheaf{I}^i_{[-]}$ is supported on a unique vertex $v \in N_{\mathscr{V}}$, then annotate the component by $U \in \mathscr{V}$, where $U$ is the open set corresponding to the vertex $v$.
\item Let $v_l, v_r \in N_{\mathscr{V}}$ each denote the left and rightmost supports of $\cosheaf{I}^i_{[-]}$. If $v_l, v_{l+1}, \dots, v_r$ are the verices of $N_{\mathscr{V}}$ between $v_l$ and $v_r$, then the cosheaf $\cosheaf{I}^i_{[-]}$ represents a feature that lives in all $U_l, U_{l+1}, \dots, U_r \in \mathscr{V}$. The user can annotate the corresponding component by $[U_l, U_r]$ or $U_l$ or $U_r$, depending on the user's goal.
\end{enumerate}

For example, assume that
\[ \mathbb{V}_*^{i} = H_0(C_{\bullet} \oplus \cosheaf{I}^{i}_{n}) \oplus H_1(C_{\bullet} \cosheaf{F}^{i}_{n-1})= \mathbb{K} \oplus \mathbb{K} \oplus \mathbb{K} \oplus \mathbb{K},\]
where the first three components come from $H_0(C_{\bullet}\oplus \cosheaf{I}^{i}_{n})$ and the last component $\field{K}$ comes from $H_1(C_{\bullet} \cosheaf{F}^{i}_{n-1})$. An example of cosheaf $\oplus \cosheaf{I}^{i}_{n}$ is illustrated in Figure \ref{fig:example_decomposition}.

\begin{figure}[h!]
\centering
\includegraphics[scale=0.25]{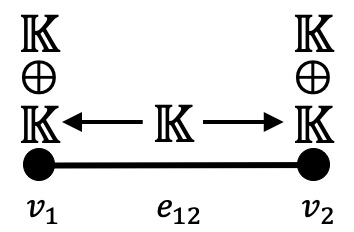}
\caption{An example decomposition of a cosheaf $\cosheaf{F}^i_{n} \cong \oplus\cosheaf{I}^i_n$}
\label{fig:example_decomposition}
\end{figure}

Then one can label the components of $H_0(C_{\bullet} \oplus \cosheaf{I}^{i}_{n})$ by $\mathbb{K}_1 \oplus \mathbb{K}_2 \oplus \mathbb{K}_{1,2}$, where each label corresponds to the support of the indecomposable cosheaf in Figure \ref{fig:example_decomposition}. Then, the vector space $\mathbb{V}^{i}_*$ can be labeled as $\mathbb{K}_1 \oplus \mathbb{K}_2 \oplus \mathbb{K}_{1,2} \oplus \mathbb{K}$.

\subsubsection*{Step 3. Annotate the $\barcode(\mathbb{W}_s)$ for each $\mathbb{W}_s$ of $\mathbb{V}_*=\bigoplus\limits_s \mathbb{W}_s$}
Note that $\mathbb{V}_*$ can be expressed naturally as a sum of persistence modules as
\[\mathbb{V}_* = \bigoplus\limits_s \mathbb{W}_s.\]

For example, the persistence module
\[\mathbb{V}_*: \mathbb{R}^3 \xrightarrow{ \begin{bmatrix}  1 & 0 &  0 \\ 1 & 0 & 0 \\ 0 & 1 & 1 \end{bmatrix}} \mathbb{R}^3 \xrightarrow{ \begin{bmatrix}  1 & -1 &  0 \\ 0 & 0 & 1 \\ 0 & 0 & 1 \end{bmatrix}} \mathbb{R}^3\]
is the sum of persistence modules
\[ \mathbb{W}_1: \mathbb{R} \xrightarrow{ \begin{bmatrix}  1 \\ 1   \end{bmatrix}} \mathbb{R}^2 \xrightarrow{ \begin{bmatrix}  1 & -1  \end{bmatrix}} \mathbb{R} \quad \textrm{and} \quad \mathbb{W}_2: \mathbb{R}^2 \xrightarrow{ \begin{bmatrix} 1 & 1 \end{bmatrix}} \mathbb{R} \xrightarrow{ \begin{bmatrix}   1 \\  1 \end{bmatrix}} \mathbb{R}^2.\]

Moreover, $\barcode(\mathbb{V}_*)$ is the collection of barcodes $\barcode(\mathbb{W}_s)$. For each $\mathbb{W}_s$, compute $\barcode(\mathbb{W}_s)$. Let $\overline{b}$ be a bar of $\barcode(\mathbb{W}_s)$ born at parameter $\epsilon_i$. Consider the annotation of the components of $\mathbb{W}_s^i$ from Step 2. There are two cases to consider.

\begin{enumerate}[label = \textit{Case \arabic*}: , leftmargin=*, align=left]
\item All components of $\mathbb{W}^i_s$ have been annotated by a unique set $U \in \mathscr{V}$. Bar $\overline{b}$ then represents some linear combination of features in $U$, so annotate $\overline{b}$ with $U$.
\item The components of $\mathbb{W}^i_s$ have been annotated by two or more sets in $\mathscr{V}$, say $U_j$ and $U_k$. The user can decide to either not annotate the bar at all, to annotate the bar by $U_j$, or to annotate the bar by $U_k$, depending on the question of interest.
\end{enumerate}

The result of Step 3 is an annotation of $\barcode(\mathbb{V}_*)$.

\subsubsection*{Step 4. Annotate $\barcode(\mathbb{V}$)}  
We can use the annotations of $\barcode(\mathbb{V}_*)$ to annotate $\barcode(\mathbb{V})$. Note that $\barcode(\mathbb{V}_*)$ can be obtained from $\barcode(\mathbb{V})$ by truncating $\barcode(\mathbb{V})$ at parameter $\epsilon_L$. Let $[b,d]$ be a bar of $\barcode(\mathbb{V}_*)$ that has been annotated by a set $U$ in Step 3. There are two cases to consider.
\begin{enumerate}[label = \textit{Case \arabic*}: , leftmargin=*, align=left]
\item If $d< \epsilon_L$, then annotate the bar $[b,d]$ of $\barcode(\mathbb{V})$ with $U$. 
\item If $d = \epsilon_L$ and $[b,d]$ is the unique bar of $\barcode(\mathbb{V}_*)$ with birth time $b$, then there exists a unique bar $[b, d']$ in $\barcode(\mathbb{V})$ with birth time $b$. Annotate the bar $[b,d']$ of $\barcode(\mathbb{V})$ with $U$.
\end{enumerate}

The final result of the algorithm is an annotation of $\barcode(\mathbb{V})$. One can use this annotated barcode to perform finer data analysis.

\subsection{Example: Multiscale Persistence}
\label{DataDensity}

Consider a situation where the size of a feature depends on the density of the constituting points, as illustrated in Figure \ref{fig:PointCloudExample}. Figure \ref{TotalBarcode} illustrates the corresponding barcode, which suggests that there is one significant feature. Standard persistent homology fails to detect the small, but densely sampled features. Multiscale persistent homology can select the bars that correspond to small but densely sampled features and annotate them as being significant.

\begin{figure}[h!]
\centering
\includegraphics[scale=0.25]{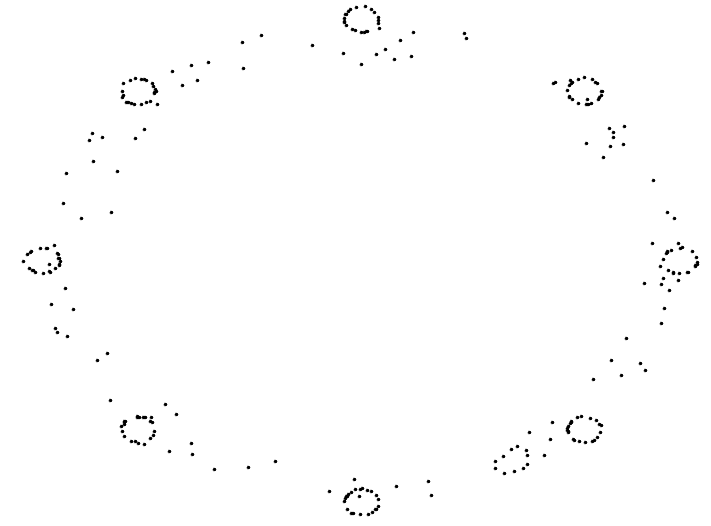}
\caption{A point cloud with varying density}
\label{fig:PointCloudExample}
\end{figure}
\begin{figure}[h!]
\centering
\includegraphics[scale=0.3]{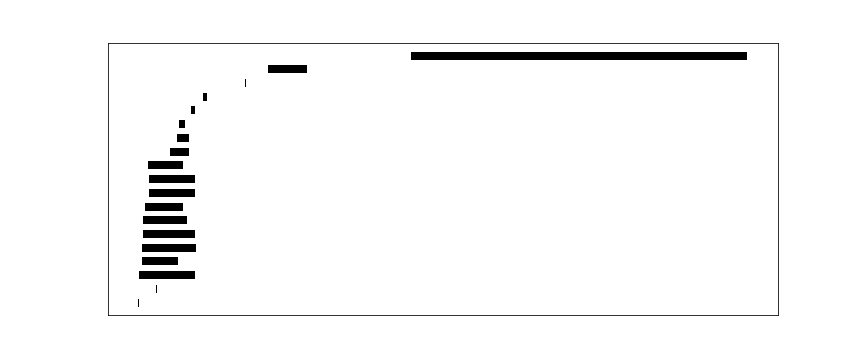}
\caption{Barcode from standard persistent homology in dimension $1$}
\label{TotalBarcode}
\end{figure}

Let $P$ denote the point cloud in Figure \ref{fig:PointCloudExample}, and let $f : P \to \mathbb{R}$ be the function mapping each point to its estimated density value. In our example, $f(p)$ represents the number of points in a radius $r$-ball centered at $p$.

We chose a covering $\mathscr{V}=\{U_s, U_d \}$ of $f(P)$ by the following. We first plotted a histogram of density values as illustrated in Figure \ref{histogramPlot}, and decided to cover $f(P)$ with two sets, $U_s$ and $U_d$, where $U_s = (0,18)$ and $U_d=(8,26)$. We will refer to points in $f^{-1}(U_s)$ as the sparse points, and we will refer to points in $f^{-1}(U_d)$ as the dense points. Figures \ref{fig:SparsePoints} and \ref{fig:DensePoints} illustrate the sparse and dense points.
\begin{figure}[h!]
\centering
\includegraphics[scale=0.18]{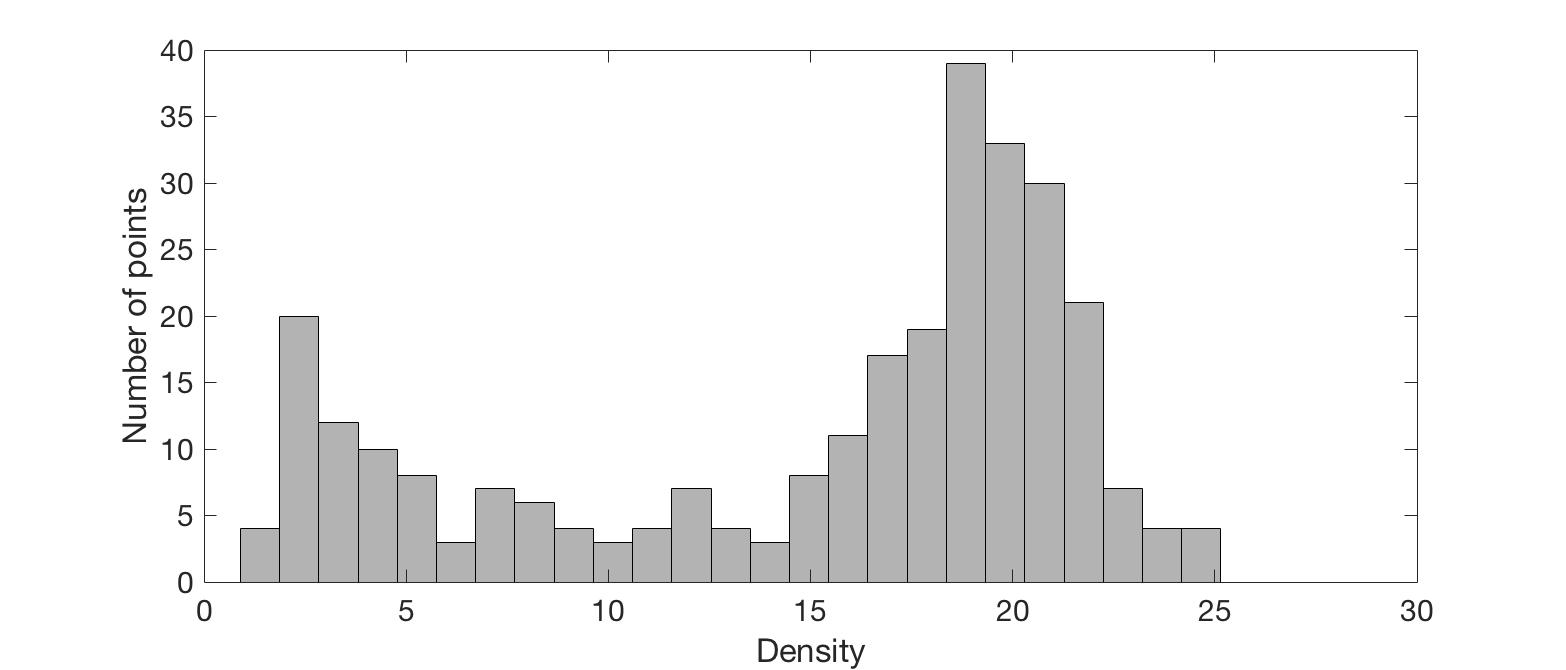}
\caption{Histogram plot of estimated density values.}
\label{histogramPlot}
\end{figure}

\begin{figure}[h!]
\centering
\begin{subfigure}[b]{0.48\textwidth}
\includegraphics[scale=0.125]{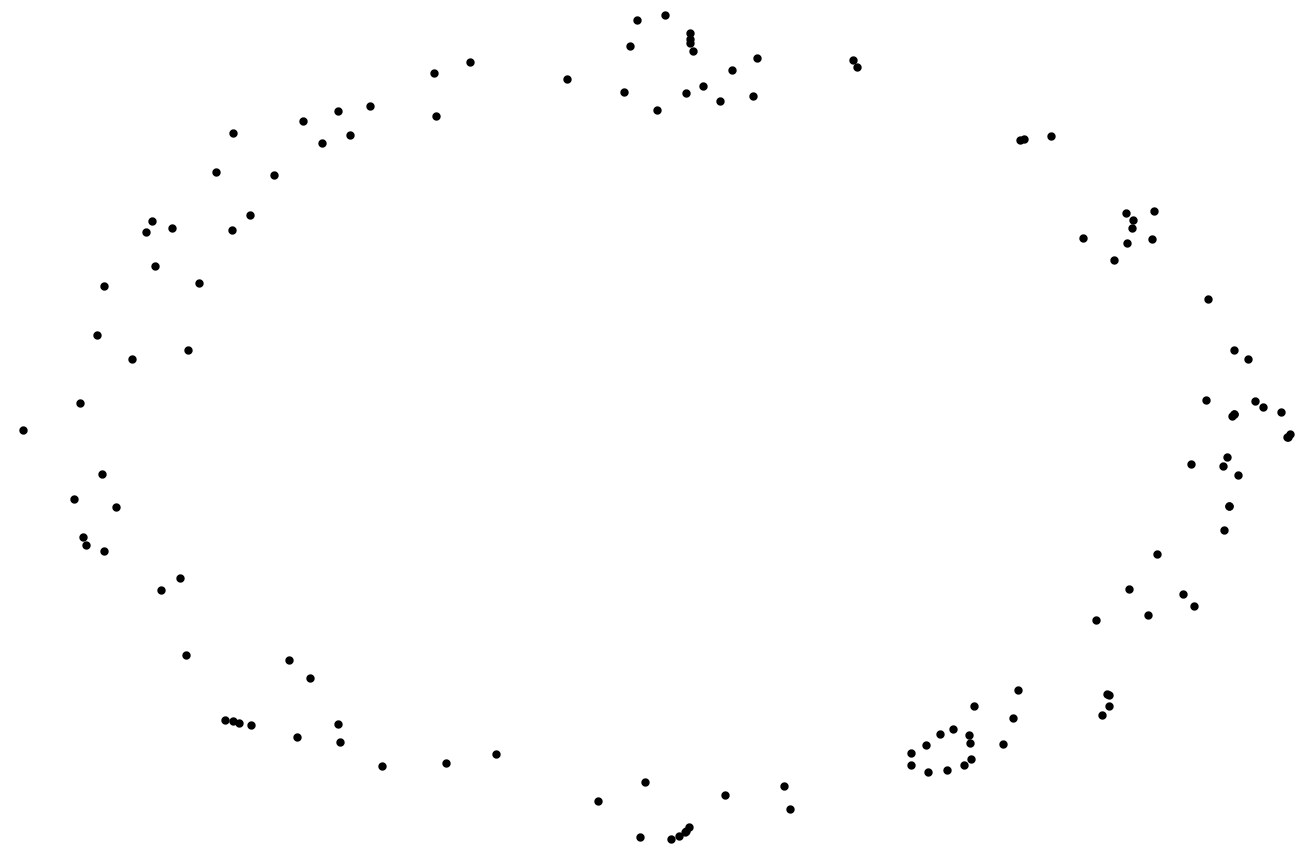}
\caption{Sparse points}\label{fig:SparsePoints}
\end{subfigure}
\begin{subfigure}[b]{0.48\textwidth}
\includegraphics[scale=0.125]{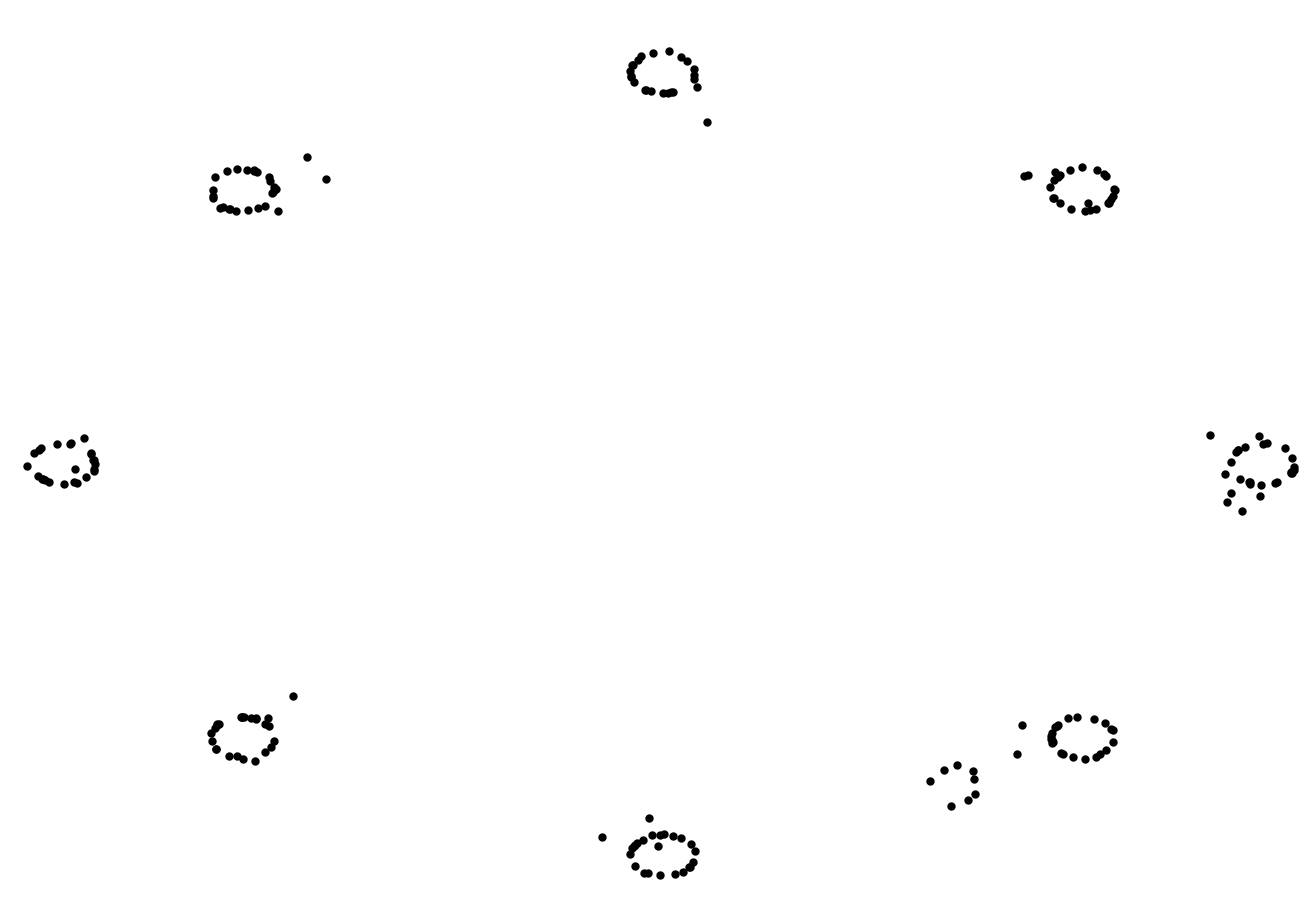}
\caption{Dense points}\label{fig:DensePoints}
\end{subfigure}
\caption{Sparse and dense points.}\label{fig:SparseDensePoints}
\end{figure}

We now follow Algorithm \ref{TheAlgorithm}. Let
\[ \mathbb{V} : H_1(\mathscr{R}^1) \to \dots \to H_1(\mathscr{R}^N) \]
be the persistence module obtained from the point cloud $P$. For this example, the maximum parameter is $\epsilon_N = 1.6$. Let $\epsilon_*$ be the upper bound of the parameter $\epsilon$ from Lemma \ref{DiscreteCGNBound} for which the isomorphism
\[H_n(\mathscr{R}^{\epsilon}) \cong H_0(C_{\bullet} \cosheaf{F}^{\epsilon}_n) \oplus H_1( C_{\bullet} \cosheaf{F}^{\epsilon}_{n-1}) \]
holds. For this example, $\epsilon_*$ is $0.0719$. Compute the persistence module
\begin{equation}
\label{DistributedTruncated}
\mathbb{V}_* : H_0(C_{\bullet}\oplus \cosheaf{I}^{1}_1) \oplus H_1(C_{\bullet} \cosheaf{F}^{1}_{0}) \to \dots \to
H_0(C_{\bullet} \oplus \cosheaf{I}^{\epsilon_*}_1) \oplus H_1(C_{\bullet}\cosheaf{F}^{\epsilon_*}_0)
\end{equation}
following Step 1 of Algorithm \ref{TheAlgorithm}.

Step 2 of Algorithm \ref{TheAlgorithm} labels the components of vector space $H_0(C_{\bullet} \oplus \cosheaf{I}^i_1)$ according to the support of the indecomposable cosheaves $\cosheaf{I}^i_{[-]}$.

Step 3 of Algorithm \ref{TheAlgorithm} results in an annotated version of $\barcode(\mathbb{V}_*)$, illustrated in Figure \ref{TruncatedBarcodeAnnotated}. The top two gray bars have been annotated by $U_s$. The two bars represent features in the sparse points. The remaining black bars have been annotated by $U_d$, and they represent features in the dense points.

\begin{figure}[h!]
\centering
\includegraphics[scale=0.3]{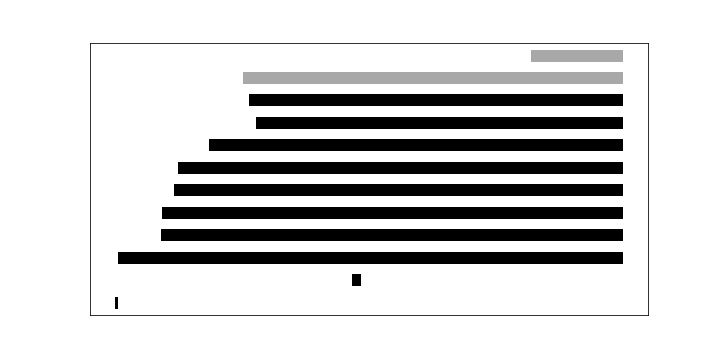}
\caption{Annotated $\barcode(\mathbb{V}_*)$. The top two gray bars have been annotated by $U_s$, and they represent features in the sparse points. The remaining black bars have been annotated by $U_d$, and they represent features in the dense points.}
\label{TruncatedBarcodeAnnotated}
\end{figure}
Step 4 of Algorithm \ref{TheAlgorithm} allows us to transfer the annotation of $\barcode(\mathbb{V}_*)$ to $\barcode(\mathbb{V})$ resulting in an annotated version of $\barcode(\mathbb{V})$ illustrated in Figure \ref{TotalBarcodeAnnotated}. The two bars enclosed by the gray box are annotated by $U_s$, and the bars enclosed by black box are annotated by $U_d$.
\begin{figure}[h!]
\centering
\includegraphics[scale=0.4]{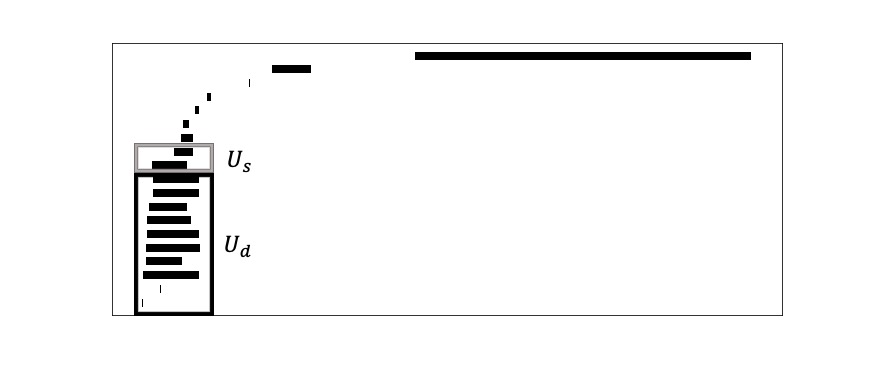}
\caption{Annotated $\barcode(\mathbb{V})$. The two bars enclosed by the gray box are annotated by $U_s$, and the bars  dyenclosed by black box are annotated by $U_d$.}
\label{TotalBarcodeAnnotated}
\end{figure}

The goal is to determine the small but significant features that consist of the denser points. Thus, we focus on the bars of Figure \ref{TotalBarcodeAnnotated} that have been annotated by $U_d$. By restricting our attention to only the bars that represent features in $U_d$, we are able to determine the significant features built among the dense points. By examining the bars annotated by $U_d$, one can conclude that there are eight significant bars.

Lastly, we return to $\barcode(\mathbb{V})$ and indicate the significant features. We then obtain the barcode in Figure \ref{AnnotatedBarcode}, where the black bars indicate significant features and the gray bars indicate noise. Note that we have one long black bar, which is deemed significant because of its length. We have eight additional shorter significant bars which were identified via Algorithm \ref{TheAlgorithm}.

\begin{figure}[h!]
\centering
\includegraphics[scale=0.4]{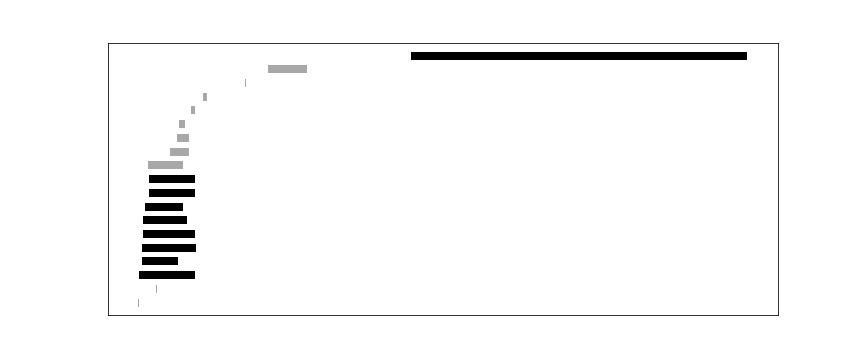}
\caption{Final annotation of $\barcode(\mathbb{V})$}
\label{AnnotatedBarcode}
\end{figure}

The persistent homology computation Julia package {\em Eirene.jl} \cite{HenselmanGhrist16} identifies persistent homology generators in the point cloud. Using this, we identified the points of $P$ that constitute each significant feature. The eight significant short bars identified indeed correspond to the eight small but densely sampled features in Figure \ref{fig:PointCloudExample}.



\begin{appendices}
\section{Proof of Lemma \ref{DiscreteCGNBound}}
\label{A:DiscreteCGNBound}

\discreteCGNlemma*

\begin{proof}
We first specify $\epsilon_*$. In what follows, use the convention that the minimum over an empty set is $\infty$.

Each $p \in P$ lies in either one or two elements of the cover $\mathscr{V}$. If unique, $p\in U \in \mathscr{V}$, then set
\begin{equation}
\label{KpEq1}
K_p = \min_{\{ q \notin U \}} d(p,q).
\end{equation}
If $p$ lies in two sets of the cover,  $p\in U\cap W$, then first let
\begin{equation}
\label{kp1}
K_p' =\min_{ \{ q \notin U \cup W \} } d(p,q),
\end{equation}
and let
\begin{equation}
\label{kp2}
K_p'' = \min_{ \substack{ \{ q, q' |  d(p,q) < K_p', \quad q\notin U, \\
			  \qquad d(p,q')<K_p', \quad q' \notin W \} } }
			  d(q, q').
\end{equation}
Then set
\[ K_p = \min \{ K_p', K_p'' \}. \]
Let $\epsilon_* = \min_{p \in P} K_p$. Assume $\epsilon < \epsilon_*$. We assert Equation (\ref{DistributedComputationK}) by showing that the Rips system covers $\mathscr{R}^{\epsilon}$. Let $\omega = (v_0, \dots, v_l)$ be a simplex of $\mathscr{R}^{\epsilon}$ with vertices as listed. The pairwise distances thus satisfy $d(v_i, v_j) < \epsilon$.

If there exists a vertex of $\omega$, say $v_0$, such that $v_0$ belongs to a unique $U\in\mathscr{V}$, then by construction, $\epsilon_* < K_{v_0} = \min_{\{ q \notin U \}} d(v_0, q)$ from Equation (\ref{KpEq1}). Thus, for any other vertex $v$ of $\omega$, we have $d(v_0,v) < \epsilon < K_{v_0}$, and hence all of $\omega$ is covered by $U$.

Otherwise, every vertex $v$ of $\omega$ is covered by two sets in $\mathscr{V}$. Without loss of generality, assume that $v_0 \in U \cap W$. Note that for any other vertex $v$ of $\omega$, we have
\begin{equation}
\label{ineq}
d(v_0, v) < \epsilon < K_{v_0} \leq K_{v_0}',
\end{equation}
where $K_{v_0}'$ is given by Equation (\ref{kp1}). So $v\in U\cup W$ for every $v \in \omega$. In fact, we can show that all of $\omega$ lies in $U$ or in $W$. Assuming the contrary, there exist distinct vertices, say $v_1$ and $v_2$, such that $v_1 \notin U$ and $v_2 \notin W$. By construction, $d(v_0, v_1) < \epsilon < K_{v_0}'$, and $d(v_0, v_2) < \epsilon < K_{v_0}'$. By definition of $K_{v_0}''$ from Equation (\ref{kp2}), we know that $K_{v_0}'' \geq d(v_1, v_2)$. However, this contradicts the fact that $d(v_1, v_2) < \epsilon < K_{v_0}''$. Thus, $\omega$ is covered by some subcomplex $\mathscr{R}^{\epsilon}_{\sigma}$. Thus, the Rips system covers $\mathscr{R}^{\epsilon}$, and Lemma \ref{DiscreteCGNBound} follows from the proof of the analogous result in \cite{CGNDiscrete13}.
\end{proof}

\section{Independence of $\alpha^{i+1}$}
\label{DeltaWellDefined}

\begin{Lemma}
The construction of $\delta^i$ on a basis element $\{ \langle b \rangle \} \in \mathscr{B}^i_{\ker}$ in Equation (\ref{deltai_def_basis}) does not depend on the choice of $\alpha^{i+1}$.
\end{Lemma}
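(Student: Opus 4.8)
The plan is to show that if $\alpha^{i+1}$ and $\tilde{\alpha}^{i+1}$ are two choices in $\bigoplus_{e \in N_{\mathscr{V}}} C_n(\mathscr{R}^{i+1}_e)$ both satisfying $\partial \alpha^{i+1} = \ince^i_{n-1} b^* = \partial \tilde{\alpha}^{i+1}$, then the resulting values $\{ \langle -e^{i+1}_n \circ \alpha^{i+1} + \incv^i_n \circ \beta^i \rangle \}$ and $\{ \langle -e^{i+1}_n \circ \tilde{\alpha}^{i+1} + \incv^i_n \circ \beta^i \rangle \}$ agree in $H_0(C_{\bullet} \cosheaf{F}^{i+1}_n)$. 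Subtracting, the difference of the two chains is $-e^{i+1}_n \circ (\alpha^{i+1} - \tilde{\alpha}^{i+1})$, so it suffices to prove that for any $\mu \in \bigoplus_{e \in N_{\mathscr{V}}} C_n(\mathscr{R}^{i+1}_e)$ with $\partial \mu = 0$, the class $\{ \langle e^{i+1}_n \circ \mu \rangle \}$ is trivial in $H_0(C_{\bullet} \cosheaf{F}^{i+1}_n)$.

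First I would observe that $\partial \mu = 0$ means $\mu$ is a cycle, so $\langle \mu \rangle$ is a well-defined class in $\bigoplus_{e \in N_{\mathscr{V}}} H_n(\mathscr{R}^{i+1}_e)$, which (recalling Diagram~\ref{SS1}) is precisely the chain group $C_1(C_{\bullet} \cosheaf{F}^{i+1}_n)$ sitting over the edges of the nerve. The map $e^{i+1}_n$ on chains descends on homology to the cosheaf boundary map $\partial^{i+1}_n : C_1(C_{\bullet} \cosheaf{F}^{i+1}_n) \to C_0(C_{\bullet} \cosheaf{F}^{i+1}_n)$ — this is exactly the identification used when passing from Diagram~\ref{SS1} to Diagram~\ref{SS2}, since $\cosheaf{F}^{i+1}_n(\tau \FaceRelation \sigma)$ is the map induced by the inclusion $\mathscr{R}^{i+1}_\sigma \hookrightarrow \mathscr{R}^{i+1}_\tau$ and the signs come from the incidence numbers. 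Therefore $\langle e^{i+1}_n \circ \mu \rangle = \partial^{i+1}_n \langle \mu \rangle$, which lies in $\im \partial^{i+1}_n = \im \partial_1$ of the cosheaf chain complex $C_{\bullet} \cosheaf{F}^{i+1}_n$. Hence its class $\{ \langle e^{i+1}_n \circ \mu \rangle \}$ in $H_0(C_{\bullet} \cosheaf{F}^{i+1}_n) = \ker \partial_0 / \im \partial_1$ is zero.

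Putting this together, $\delta^i \{ \langle b \rangle \}$ computed with $\alpha^{i+1}$ and with $\tilde{\alpha}^{i+1}$ differ by $-\partial^{i+1}_n \langle \alpha^{i+1} - \tilde{\alpha}^{i+1} \rangle$, a boundary, so they represent the same element of $H_0(C_{\bullet} \cosheaf{F}^{i+1}_n)$, and $\delta^i$ is independent of the choice of $\alpha^{i+1}$. The one point requiring care — and the main obstacle — is justifying cleanly that $\alpha^{i+1} - \tilde{\alpha}^{i+1}$ is a genuine cycle living over the edges (which is immediate from $\partial(\alpha^{i+1} - \tilde{\alpha}^{i+1}) = 0$) and that the passage "$e^{i+1}_n$ on chains induces $\partial^{i+1}_n$ on homology classes" is precisely the identification implicit in the spectral-sequence bookkeeping of Diagrams~\ref{SS0}–\ref{SS2}; once that dictionary is made explicit, the argument is a one-line diagram chase. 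I would also remark that $-e^{i+1}_n \circ \alpha^{i+1} + \incv^i_n \circ \beta^i$ is indeed a cycle representing a class in $H_0(C_{\bullet}\cosheaf{F}^{i+1}_n)$ — checked by applying the cosheaf boundary $\partial_0$ and using $\partial \beta^i = e^i_{n-1} b^*$ together with $\partial \alpha^{i+1} = \ince^i_{n-1} b^*$ and commutativity of the cube in Diagram~\ref{SS0} — so that the expression $\{ \langle \cdot \rangle \}$ makes sense; this is the "one can check" already asserted after Equation~(\ref{deltai_def_basis}).
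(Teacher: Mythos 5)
Your proof is correct and follows essentially the same route as the paper: you form the difference $\mu = \alpha^{i+1} - \tilde{\alpha}^{i+1}$, observe it is a cycle so $\langle\mu\rangle$ lives in $\bigoplus_{e} H_n(\mathscr{R}^{i+1}_e)$, and note that $\langle e^{i+1}_n \mu \rangle = \partial^{i+1}_n \langle \mu \rangle \in \im \partial^{i+1}_n$, hence is killed in $H_0(C_{\bullet}\cosheaf{F}^{i+1}_n)$. The paper's argument in Appendix~\ref{DeltaWellDefined} is this same computation; your additional remarks about the chain-to-homology dictionary from Diagram~\ref{SS1} to \ref{SS2} and the well-definedness of the representative only make explicit what the paper leaves implicit.
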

\begin{proof}
Let $\alpha_1, \alpha_2 \in \bigoplus\limits_{e \in N_{\mathscr{V}}} C_n(\mathscr{R}^{i+1}_e)$ be two different choices of $\alpha$ that satisfy Equation (\ref{Alpha}):
$\partial \alpha = \ince^i_{n-1} b^*$.
Note that $\partial (\alpha_1-\alpha_2) =\ince^i_{n-1} b^*-\ince^i_{n-1} b^*=0$. So $\langle \alpha_1 - \alpha_2 \rangle $ represents an element in $\bigoplus\limits_{e \in N_{\mathscr{V}}} H_n(\mathscr{R}^{i+1}_e)$. Then, $ \langle e^{i+1}_n(\alpha_1-\alpha_2) \rangle =  \partial^{i+1}_n \langle \alpha_1-\alpha_2 \rangle \in \im \partial^{i+1}_n$. So $\{ \langle e^{i+1}_n (\alpha_1 -\alpha_2) \rangle  \}$ is trivial in $H_0(C_{\bullet} \cosheaf{F}^{i+1}_n)$. Hence, $ \{ \langle- e^{i+1}_n \alpha_1+ \incv^i_n \circ \beta^i \rangle \}= \{ \langle -e^{i+1}_n \alpha_2+\incv^i_n \circ \beta^i  \rangle \}$ in $H_0(C_{\bullet} \cosheaf{F}^{i+1}_n)$. Thus, the map $\delta^i$ does not depend on the choice of $\alpha$.
\end{proof}

\section{Obtaining basis $\mathscr{C}^i_{\im}$ from basis $\mathscr{B}^{i-1}_A$ of $A^{i-1}$}
\label{LinearIndependence}

\begin{Lemma}
Let $\mathscr{B}^{i-1}= \mathscr{B}^{i-1}_A \cup \mathscr{B}^{i-1}_{\ker}$ be a basis of $H_1(C_{\bullet} \cosheaf{F}^{i-1}_{n-1}) =A^{i-1} \oplus \ker H_1(\phi^{i-1}_{n-1})$. Let $\mathscr{B}^{i-1}_A = \{ \, \{ \langle b_1 \rangle \}, \dots, \{ \langle b_t \rangle \} \, \}$ be the basis of $A^{i-1}$. Then,
\[
\{ \langle \ince^{i-1}_{n-1} b_1 \rangle \} , \dots, \{ \langle \ince^{i-1}_{n-1} b_t \rangle \}
\]
are linearly independent in $H_1(C_{\bullet} \cosheaf{F}^i_{n-1})$.
\end{Lemma}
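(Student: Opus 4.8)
The plan is to recognize the listed elements as the images, under the morphism $H_1(\phi^{i-1}_{n-1})$ induced on cosheaf homology, of the basis $\mathscr{B}^{i-1}_A$ of $A^{i-1}$, and then to exploit that $A^{i-1}$ was chosen precisely as a complement of $\ker H_1(\phi^{i-1}_{n-1})$. First I would record that, since $N_{\mathscr{V}}$ is one-dimensional, the chain complex $C_{\bullet} \cosheaf{F}^i_{n-1}$ has only the two nonzero terms $\bigoplus_{e} H_{n-1}(\mathscr{R}^i_e) \xrightarrow{\partial^i_{n-1}} \bigoplus_{v} H_{n-1}(\mathscr{R}^i_v)$, so that $H_1(C_{\bullet} \cosheaf{F}^i_{n-1}) = \ker \partial^i_{n-1}$; an element of $H_1(C_{\bullet} \cosheaf{F}^i_{n-1})$ is thus just a class $\langle \gamma \rangle \in \bigoplus_e H_{n-1}(\mathscr{R}^i_e)$ with $\partial^i_{n-1}\langle\gamma\rangle = 0$, and the outer bracket $\{\langle\gamma\rangle\}$ records no further identification.

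Next I would unwind the definition of the cosheaf morphism $\phi^{i-1}_{n-1} : \cosheaf{F}^{i-1}_{n-1} \to \cosheaf{F}^i_{n-1}$: on each cell $\sigma$ it is the inclusion-induced map $H_{n-1}(\mathscr{R}^{i-1}_\sigma) \to H_{n-1}(\mathscr{R}^i_\sigma)$, so on the degree-one summand $\bigoplus_e H_{n-1}(\mathscr{R}^{i-1}_e) \to \bigoplus_e H_{n-1}(\mathscr{R}^i_e)$ of the two chain complexes it is exactly the map induced on homology by the chain-level inclusions $\ince^{i-1}_{n-1}$. Commuting this against $\partial^{i-1}_{n-1}$ and $\partial^i_{n-1}$ is immediate, as everything is induced by inclusions of Rips complexes; passing to $H_1 = \ker \partial_{n-1}$ then gives $H_1(\phi^{i-1}_{n-1})\{\langle b_k \rangle\} = \{\langle \ince^{i-1}_{n-1} b_k \rangle\}$ for each $k$. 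Hence the assertion is equivalent to the statement that $H_1(\phi^{i-1}_{n-1})$ carries the basis $\{\langle b_1\rangle\},\dots,\{\langle b_t\rangle\}$ of $A^{i-1}$ to a linearly independent subset of $H_1(C_{\bullet} \cosheaf{F}^i_{n-1})$.

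The remaining step is a one-line linear-algebra observation: from the defining decomposition $H_1(C_{\bullet} \cosheaf{F}^{i-1}_{n-1}) = A^{i-1} \oplus \ker H_1(\phi^{i-1}_{n-1})$ we get $A^{i-1} \cap \ker H_1(\phi^{i-1}_{n-1}) = 0$, so the restriction $H_1(\phi^{i-1}_{n-1})|_{A^{i-1}}$ is injective, and an injective linear map sends a basis of its domain to a linearly independent set. Thus $\{\langle \ince^{i-1}_{n-1} b_1 \rangle\},\dots,\{\langle \ince^{i-1}_{n-1} b_t \rangle\}$ are linearly independent in $H_1(C_{\bullet} \cosheaf{F}^i_{n-1})$. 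There is no genuine obstacle here; the only point demanding any care is the bookkeeping in the second paragraph, namely checking that the map on $H_1$ of the cosheaf chain complex induced by $\phi^{i-1}_{n-1}$ really is the one induced by the chain inclusions $\ince^{i-1}_{n-1}$, which amounts to matching the vertex-wise and edge-wise components of $\phi^{i-1}_{n-1}$ against the definitions.
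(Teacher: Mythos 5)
Your proof is correct and follows essentially the same route as the paper's: both hinge on recognizing $\{\langle \ince^{i-1}_{n-1} b_k \rangle\}$ as $H_1(\phi^{i-1}_{n-1})\{\langle b_k \rangle\}$ and then invoking the direct sum decomposition $H_1(C_{\bullet} \cosheaf{F}^{i-1}_{n-1}) = A^{i-1} \oplus \ker H_1(\phi^{i-1}_{n-1})$. The paper phrases the linear-algebra step as a proof by contradiction (a nontrivial dependence would place a nonzero vector in $A^{i-1} \cap \ker H_1(\phi^{i-1}_{n-1})$), while you phrase it directly as injectivity of $H_1(\phi^{i-1}_{n-1})|_{A^{i-1}}$; these are equivalent, and your version is arguably cleaner.
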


\begin{proof}
Assume the contrary, that
\[ c_1 \{ \langle \ince^{i-1}_{n-1} b_1 \rangle \} + \cdots + c_t \{ \langle \ince^{i-1}_{n-1} b_t \rangle \} = \{ \langle 0 \rangle \} \]
for some $c_1, \dots, c_t$ that are not all zero. By construction, this implies that
\[ c_1 \langle \ince^{i-1}_{n-1} b_1 \rangle + \cdots + c_t \langle \ince^{i-1}_{n-1} b_t \rangle = \langle 0 \rangle \]
for some $c_1, \dots, c_t$ that are not all zero. Then, $\langle c_1 b_1 + \dots c_t b_t \rangle \in \ker H_1(\phi^{i-1}_{n-1})$. Note that $\langle c_1 b_1 + \dots c_t b_t \rangle \in A^{i-1}$ as well since $A^{i-1}$ is a subspace of $H_1(C_{\bullet} \cosheaf{F}^{i-1}_{n-1})$. This contradicts the fact that $H_1(C_{\bullet} \cosheaf{F}^{i-1}_{n-1})$ is a direct sum of $\ker H_1^{i-1}(\phi_{n-1})$ and $A^{i-1}$.
Thus, $\{ \langle \ince^{i-1}_{n-1} b_1 \rangle \} , \dots, \{ \langle \ince^{i-1}_{n-1} b_t \rangle \}$ are linearly independent.
\end{proof}

\section{Details of proof of Lemma \ref{Theorem2}}
\label{ProofTheorem2}
\begin{Lemma}
The map $\Phi^i_{\total} : H_n(\total^i) \to H_n(\mathscr{R}^i) $ is well-defined and bijective.
\end{Lemma}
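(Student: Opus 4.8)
The plan is to recognize $\total^i$ as the mapping cone of the chain map $e^i_\bullet\colon \bigoplus_{e\in N_{\mathscr{V}}}C_\bullet(\mathscr{R}^i_e)\to\bigoplus_{v\in N_{\mathscr{V}}}C_\bullet(\mathscr{R}^i_v)$ and to prove that
\[
0\longrightarrow \bigoplus_{e\in N_{\mathscr{V}}}C_\bullet(\mathscr{R}^i_e)\xrightarrow{\;e^i_\bullet\;}\bigoplus_{v\in N_{\mathscr{V}}}C_\bullet(\mathscr{R}^i_v)\xrightarrow{\;j^i_\bullet\;}C_\bullet(\mathscr{R}^i)\longrightarrow 0
\]
is exact. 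Granting exactness, $\total^i$ is the mapping cone of an injective chain map, hence quasi-isomorphic to its cokernel $C_\bullet(\mathscr{R}^i)$ via $[x,y]\mapsto[j^i_n(x)]$, which is precisely $\Phi^i_{\total}$; a spectral sequence argument for the double complex in Diagram~\ref{Spectral0'} gives the same conclusion (the row homology collapses to $C_\bullet(\mathscr{R}^i)$ in the $e^i_\bullet$-cokernel position and to $0$ elsewhere). Exactness has two ingredients. Surjectivity of $j^i_\bullet$ is exactly the assertion that the Rips system covers $\mathscr{R}^i$, proved in Appendix~\ref{A:DiscreteCGNBound} for $\epsilon_i<\epsilon_*$. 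Injectivity of $e^i_\bullet$ together with $\ker j^i_\bullet=\im e^i_\bullet$ uses that $N_{\mathscr{V}}$ is one-dimensional: a simplex of $\mathscr{R}^i$ meeting three cover sets would produce a $2$-simplex in the nerve, so every simplex lies in at most two cover sets, and a simplex in $V_j$ and $V_k$ lies in $\mathscr{R}^i_{V_j\cap V_k}$. Consequently, for a fixed vertex $v$ the images in $C_\bullet(\mathscr{R}^i_v)$ of $C_\bullet(\mathscr{R}^i_e)$ over the edges $e\ni v$ are supported on pairwise disjoint simplices, which forces $e^i_\bullet$ to be injective; the identity $\ker j^i_\bullet=\im e^i_\bullet$ is then the usual bookkeeping with the incidence signs $[v:e]$.

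If one prefers to avoid the cone formalism, the same facts yield the statement through four direct chases in Diagram~\ref{Spectral0'}. For \emph{well-definedness}: if $\partial x=(-1)^{n-1}e^i_{n-1}y$ then $\partial j^i_n(x)=j^i_{n-1}(\partial x)=\pm\, j^i_{n-1}e^i_{n-1}(y)=0$ since $j^i_\bullet\circ e^i_\bullet=0$, so $j^i_n(x)$ is a cycle of $\mathscr{R}^i$; and if $\partial p_{n+1}+(-1)^{n+1}e^i_n(q_n)=x$ then $j^i_n(x)=\partial j^i_n(p_{n+1})$, so the class $[j^i_n(x)]$ does not depend on the representative $[x,y]$. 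For \emph{surjectivity}: lift a cycle $z$ of $\mathscr{R}^i$ through $j^i_n$ to some $x$, note $\partial x\in\ker j^i_{n-1}=\im e^i_{n-1}$, solve $(-1)^{n-1}e^i_{n-1}(y)=\partial x$ using injectivity of $e^i_{n-1}$, and check $\partial y=0$ by applying injectivity of $e^i_{n-2}$ to $e^i_{n-2}(\partial y)=\pm\,\partial\partial x=0$; then $\Phi^i_{\total}([x,y])=[z]$. For \emph{injectivity}: if $j^i_n(x)=\partial w$, lift $w$ through the surjection $j^i_{n+1}$ to some $p_{n+1}$, write $x-\partial p_{n+1}=(-1)^{n+1}e^i_n(q_n)$ using $\ker j^i_n=\im e^i_n$, and deduce $\partial q_n=y$ from $\partial x=(-1)^{n-1}e^i_{n-1}(y)$ and injectivity of $e^i_{n-1}$, so $[x,y]$ is trivial.

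The only genuinely load-bearing step, and the one I expect to require care, is the exactness of the Mayer--Vietoris sequence above — specifically the injectivity of $e^i_\bullet$ and the identity $\ker j^i_\bullet=\im e^i_\bullet$ — since this is where the one-dimensionality of $N_{\mathscr{V}}$ (absence of triple overlaps) is used and where the orientation signs $[v:e]$ together with the total-complex signs $(-1)^{n\pm 1}$ must be kept consistent throughout. Everything else is formal homological algebra, so I would isolate that exactness statement as the single lemma to prove carefully and then run the chases above mechanically.
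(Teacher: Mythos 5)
Your ``direct chase'' route is essentially the paper's own proof: the paper introduces the same short exact Mayer--Vietoris sequence $0 \to \bigoplus_e C_\bullet(\mathscr{R}^i_e) \xrightarrow{e^i_\bullet} \bigoplus_v C_\bullet(\mathscr{R}^i_v) \xrightarrow{j^i_\bullet} C_\bullet(\mathscr{R}^i) \to 0$ (Diagram~\ref{DoubleComplexSection}) and runs exactly the three chases you list (well-definedness, surjectivity via lifting a cycle and applying injectivity of $e^i_{n-2}$, injectivity via lifting a bounding chain through $j^i_{n+1}$). Your mapping-cone framing is a valid and clean repackaging of the same argument; it makes manifest that the only content beyond homological boilerplate is the exactness of the Mayer--Vietoris sequence, which the paper in fact \emph{asserts} rather than proves, so your decision to isolate it as the lemma to prove carefully is the right instinct. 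One imprecision in your sketch of exactness: ``a simplex of $\mathscr{R}^i$ meeting three cover sets would produce a $2$-simplex in the nerve'' is not correct as stated, since a simplex can have its various \emph{vertices} scattered across three pairwise-overlapping cover sets without forcing a triple intersection. What you actually need (and what is true) is: if a simplex $\omega$ lies in $\mathscr{R}^i_{V_1}\cap\mathscr{R}^i_{V_2}\cap\mathscr{R}^i_{V_3}$, then \emph{every} vertex of $\omega$ lies in $V_1\cap V_2\cap V_3$, which is empty by $1$-dimensionality of $N_{\mathscr{V}}$; and for surjectivity of $j^i_\bullet$ you need (and correctly cite) the covering lemma in Appendix~\ref{A:DiscreteCGNBound} to guarantee each simplex of $\mathscr{R}^i$ lies in at least one $\mathscr{R}^i_v$. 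With that phrasing tightened, your proof is correct and complete.
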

For clarity, we omit the superscript $i$ indicating the parameter $\epsilon_i$.
\begin{proof}
We first show that $\Phi_{\total}: H_n(\total) \to H_n(\mathscr{R})$ is a well-defined map. Assume that $[x_1, y_1] = [x_2,y_2]$ in $H_n(\total)$. One can build a  commutative diagram whose rows are short exact sequences
\begin{equation}
\label{DoubleComplexSection}
\begin{tikzcd}
0 &\arrow{l} C_{n}(\mathscr{R}) & \arrow{l}{j_{n}} \bigoplus\limits_{v \in N_{\mathscr{V}}} C_{n}(\mathscr{R}_v)  & \arrow{l}{e_{n}} \bigoplus\limits_{e \in N_{\mathscr{V}}} C_{n}(\mathscr{R}_e)  & \arrow{l} 0
\end{tikzcd}
\end{equation}
and whose vertical maps are the boundary operator $\partial_n$.
Since $[x_1, y_1] =[x_2, y_2]$ in $H_n(\total)$, there exists $p_{n+1} \in \bigoplus \limits_{v \in N_{\mathscr{V}}} C_{n+1} (\mathscr{R}_v)$ and $q_n \in \bigoplus\limits_{e \in N_{\mathscr{V}}} C_n(\mathscr{R}_e)$ such that $y_2  - y_1 = \partial q_n$ and $x_2-x_1 = \partial p_{n+1} +(-1)^{n+1} e_n q$. Then,
\begin{align*}
j_n(x_2-x_1) =& j_n( \partial p_{n+1} + (-1)^{n+1}e_n q)  \\
=& j_n (\partial p_{n+1}) \\
=& \partial (j_{n+1}(p_{n+1})).
\end{align*}
The second equality follows from the exactness of Diagram \ref{DoubleComplexSection}. Thus, $[j_n(x_2)] = [j_n(x_1)]$, and the map $\Phi_{\total}$ is well-defined.

We now show that $\Phi_{\total}$ is surjective. Let $[\gamma] \in H_n(\mathscr{R})$. Since the rows of Diagram \ref{DoubleComplexSection} are exact, there exists $x_n \in \bigoplus\limits_{v \in N_{\mathscr{V}}} C_n(\mathscr{R}_v)$ such that $\gamma = j_n (x_n)$. Then,
\[ j_{n-1} \circ \partial x_n = \partial \circ j_n ( x_n) = \partial \gamma = 0.\]
So $\partial x_n \in \ker j_{n-1}$. By exactness, there exists $y_{n-1} \in \bigoplus\limits_{e \in N_{\mathscr{V}}} C_{n-1}(\mathscr{R}_e)$ such that $\partial x_n = e_{n-1} ( y_{n-1})$. Moreover,
\[ e_{n-2} \circ \partial y_{n-1} = \partial \circ e_{n-1} y_{n-1} = \partial \partial x_n = 0.\]
Since $e_{n-2}$ is injective, we know that $\partial y_{n-1}=0$. Then, $[x_n, (-1)^{n-1} y_{n-1}] \in H_n(\total)$, and $\Psi_{\total}[x_n, (-1)^{n-1} y_{n-1}] = [\gamma]$. Thus, $\Psi_{\total}$ is surjective.

Lastly, we show that $\Phi_{\total}$ is injective. Assume that $\Phi_{\total}([x,y])=[j_n(x)] = 0$. Then, there exists $p_{n+1} \in C_{n+1}(\mathscr{R})$ such that $\partial p_{n+1} = j_n(x)$. Since $j_n$ and $j_{n+1}$ are surjective, there exists $p_{n+1}' \in \bigoplus\limits_{v \in N_{\mathscr{V}}} C_{n+1}(\mathscr{R}_v)$ such that $p_{n+1} =j_{n+1} (p_{n+1}')$. Then,
\begin{align*}
j_n(\partial p'_{n+1}-x) &= j_n \circ \partial p'_{n+1} - j_n(x) \\
&= \partial \circ j_{n+1}(p'_{n+1}) -j_n(x) \\
&= \partial p_{n+1} - j_n(x) = 0. 
\end{align*}

Thus, $\partial p'_{n+1} - x \in \ker j_n$. From the exactness of rows of Diagram \ref{DoubleComplexSection}, there exists $q_n \in \bigoplus\limits_{e \in N_{\mathscr{V}}} C_n(\mathscr{R}_e)$ such that $\partial p'_{n+1} - x = e_n( q_n)$.
Note that while
$\partial ( \partial p'_{n+1} - x) = \partial e_n (q_n)$,
this is equal to $-\partial x = -(-1)^{n+1}e_n(y)$ by definition. Then, $ e_n \partial q_n=\partial e_n( q_n)  = - \partial x = -(-1)^{n+1}e_n(y)$. Since $e_n$ is injective, this implies that $\partial q_n = -(-1)^{n+1} y$. Let $q_n' = - (-1)^{n+1} q_n$, so that $\partial q_n' = y$.

So far, we found $p'_{n+1} \in \bigoplus\limits_{v \in N_{\mathscr{V}}} C_{n+1}(\mathscr{R}_v)$ and $q'_n \in \bigoplus\limits_{e \in N_{\mathscr{V}}} C_n(\mathscr{R}_e)$ such that
$\partial q'_n = y$  and
\[ x= \partial p'_{n+1} -e_n(q_n) = \partial p'_{n+1} - e_n ( - (-1)^{n+1} q'_n) = \partial p'_{n+1} +(-1)^{n+1} e_n q'_n.\]
Thus, $[x,y]=0$ in $H_n(\total)$, and $\Phi_{\total}$ is injective.
\end{proof}

\section{Details of the proof of Lemma \ref{Theorem1}}
\label{ProofTheorem1}

\begin{Lemma}
The map $\Phi^i : H_0(C_{\bullet} \cosheaf{F}^i_n) \oplus H_1(C_{\bullet} \cosheaf{F}^i_{n-1}) \to H_n(\total^i)$ defined in Equation (\ref{Phi01}) is well-defined and bijective.
\end{Lemma}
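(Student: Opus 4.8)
Since $\Phi^i = \Phi^i_0 + \Phi^i_1$ is assembled from the two pieces of Equations (\ref{Phi0}) and (\ref{Phi1}), the plan is to check separately that each piece lands in $H_n(\total^i)$, that $\Phi^i$ is independent of all the choices made, and finally that it is bijective. For $\Phi^i_0$: if $x\in\bigoplus_{v}C_n(\mathscr{R}^i_v)$ is a cycle then $\partial x = 0 = (-1)^{n-1}e^i_{n-1}(0)$, so $[x,0]$ lies in $H_n(\total^i)$; and I would note that changing the chain representative of $\langle x\rangle$, or changing $\langle x\rangle$ within its coset modulo $\im\partial^i_n$, alters $x$ only by a term $\partial p + e^i_n q$ with $q$ a cycle in $\bigoplus_e C_n(\mathscr{R}^i_e)$, which is exactly the shape of term that makes $[\,\cdot\,,0]$ trivial in $H_n(\total^i)$ (take $q_n = \pm q$, $p_{n+1} = p$ in the triviality criterion from \S\ref{IsoPersistenceModules}). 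For $\Phi^i_1$, the identity $\partial\Gamma^i\{\langle b\rangle\} = e^i_{n-1}b^*$ from Equation (\ref{KeyEq}) together with $\partial b^* = 0$ shows that $\big[(-1)^{n+1}\Gamma^i\{\langle b\rangle\},\,b^*\big]$ satisfies the total-cycle relation $\partial\big((-1)^{n+1}\Gamma^i\{\langle b\rangle\}\big) = (-1)^{n-1}e^i_{n-1}b^*$ and hence represents a class in $H_n(\total^i)$; since $\Phi^i_1$ is defined on the fixed basis $\mathscr{B}^i$, with its fixed coset representatives $b^*$ and the fixed set map $\Gamma^i$ of \S\ref{Construction}, and then extended linearly, it is automatically a well-defined linear map, and so is $\Phi^i$.

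Next I would prove injectivity by a diagram chase. Suppose $\Phi^i(\{\langle x\rangle\},\{\langle y\rangle\}) = 0$. Expanding $\{\langle y\rangle\}$ in $\mathscr{B}^i$ and using linearity gives $\Phi^i_1\{\langle y\rangle\} = \big[(-1)^{n+1}\Gamma^i\{\langle y\rangle\},\,y^*\big]$, where $y^*$ is the matching combination of the chosen $b^*$'s (so $\langle y^*\rangle = \langle y\rangle$) and $\partial\Gamma^i\{\langle y\rangle\} = e^i_{n-1}y^*$. Thus the hypothesis reads $\big[\,x + (-1)^{n+1}\Gamma^i\{\langle y\rangle\},\,y^*\,\big] = 0$ in $H_n(\total^i)$, i.e., there are $p\in\bigoplus_v C_{n+1}(\mathscr{R}^i_v)$ and $q\in\bigoplus_e C_n(\mathscr{R}^i_e)$ with $\partial q = y^*$ and $\partial p + (-1)^{n+1}e^i_n q = x + (-1)^{n+1}\Gamma^i\{\langle y\rangle\}$. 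From $\partial q = y^*$ the cycle $y^*$ is a boundary, so $\langle y\rangle = 0$ in $\bigoplus_e H_{n-1}(\mathscr{R}^i_e)$; since the nerve is one-dimensional, $H_1(C_{\bullet}\cosheaf{F}^i_{n-1}) = \ker\partial^i_{n-1}$, which forces $\{\langle y\rangle\} = 0$ and hence $\Gamma^i\{\langle y\rangle\} = 0$ and $y^* = 0$. The remaining relation then reads $x = \partial p + (-1)^{n+1}e^i_n q$ with $q$ a cycle, so $\langle x\rangle = (-1)^{n+1}\partial^i_n\langle q\rangle \in \im\partial^i_n$, i.e., $\{\langle x\rangle\} = 0$ in $H_0(C_{\bullet}\cosheaf{F}^i_n)$. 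Therefore $\Phi^i$ is injective.

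Finally, for bijectivity I would compare dimensions rather than chase surjectivity: Lemma \ref{DiscreteCGNBound} gives $\dim\big(H_0(C_{\bullet}\cosheaf{F}^i_n)\oplus H_1(C_{\bullet}\cosheaf{F}^i_{n-1})\big) = \dim H_n(\mathscr{R}^i)$, and Lemma \ref{Theorem2} gives $\dim H_n(\total^i) = \dim H_n(\mathscr{R}^i)$, so $\Phi^i$ is an injection between finite-dimensional spaces of equal dimension, hence an isomorphism. (Should a self-contained argument be preferred, surjectivity can be obtained directly: given $[x,y]\in H_n(\total^i)$ the total-cycle relation puts $\langle y\rangle$ in $\ker\partial^i_{n-1}$, subtracting $\Phi^i_1\{\langle y\rangle\}$ leaves a class of the form $[x',\partial w]$ for some $w\in\bigoplus_e C_n(\mathscr{R}^i_e)$, and removing the trivial element $\big[(-1)^{n+1}e^i_n w,\partial w\big]$ leaves $[x'',0] = \Phi^i_0\{\langle x''\rangle\}$, using $\partial e^i_n = e^i_{n-1}\partial$.)

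The "landing in $H_n(\total^i)$'' checks and the independence of $\Phi^i_0$ from representatives are immediate once Equation (\ref{KeyEq}) and the triviality criterion are in hand. I expect the one genuinely delicate point to be the bookkeeping in the injectivity chase (and in the optional direct surjectivity argument): keeping straight which objects live over vertices and which over edges, the passage among a cosheaf homology class, its Rips-homology representative $\langle\,\cdot\,\rangle$, and a chosen chain representative, and the signs $(-1)^{n\pm 1}$ dictated by the total-complex convention fixed in Diagram (\ref{Spectral0'}). Once that bookkeeping is organized, no new idea is required.
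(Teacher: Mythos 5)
Your proposal is correct, and the well-definedness and injectivity portions match the paper's argument in substance. Two remarks. First, your well-definedness check for $\Phi^i_0$ is actually more careful than the printed one: the paper's proof in Appendix~\ref{ProofTheorem1} only treats a change of chain representative of $\langle x\rangle$ (producing a pure boundary $\partial p_{n+1}=x-x'$), whereas a shift of $\langle x\rangle$ within its coset modulo $\im\partial^i_n$ introduces a term $e^i_n q$ with $q$ a cycle, and that is the case genuinely requiring the triviality criterion with $q_n\neq 0$; you handle both. Second, the genuine divergence is in bijectivity: the paper proves surjectivity constructively, expanding $\{\langle y\rangle\}$ in the basis $\mathscr{B}^i$ and exhibiting an explicit preimage through a chain $r_n$, whereas your primary route is a dimension count via Lemmas~\ref{DiscreteCGNBound} and~\ref{Theorem2}. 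That count is valid in the regime $\epsilon_i<\epsilon_*$ in which all of \S\ref{sec:DistributedPH} operates, but it makes the lemma lean on the external spectral-sequence result cited in Lemma~\ref{DiscreteCGNBound}; the constructive argument --- including your parenthetical fallback, which subtracts $\Phi^i_1\{\langle y\rangle\}$ and then the visibly trivial class $[(-1)^{n+1}e^i_n w,\partial w]$ --- is self-contained and in fact establishes the isomorphism $H_0(C_{\bullet}\cosheaf{F}^i_n)\oplus H_1(C_{\bullet}\cosheaf{F}^i_{n-1})\cong H_n(\total^i)$ for \emph{every} $\epsilon$, not only those below $\epsilon_*$. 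Your fallback is arguably cleaner than the paper's formula for $r_n$; either would serve.
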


\begin{proof}
We first show that $\Phi^i$ is well-defined. Assume that $( \{ \langle x \rangle \}, \{ \langle y \rangle \} ) = (\{ \langle x' \rangle \} , \{ \langle y' \rangle \})$ in $H_0(C_{\bullet} \cosheaf{F}^i_n) \oplus H_1(C_{\bullet} \cosheaf{F}^i_{n-1})$, i.e., $\{ \langle x \rangle \} = \{ \langle x' \rangle \}$ in $H_0(C_{\bullet} \cosheaf{F}^i_n)$ and $\{ \langle y \rangle \} = \{ \langle y' \rangle \}$ in $H_1(C_{\bullet} \cosheaf{F}^i_{n-1})$.

Note that
\[ \Phi^i_0 ( \{ \langle x \rangle \}) - \Phi^i_0 ( \{ \langle x' \rangle \}) = [ x- x', 0].\]
Since $\{ \langle x \rangle \} = \{ \langle x' \rangle \}$ in $H_0(C_{\bullet} \cosheaf{F}^i_n)$, there exists $p_{n+1} \in \bigoplus\limits_{v \in N_{\mathscr{V}}} C_{n+1}(\mathscr{R}^i_v)$ such that $\partial p_{n+1}= x- x'$. Thus, $[x-x',0]$ is trivial in $H_n(\total)$, and $\Phi^i_0$ is a well-defined map.

By construction, $\Phi^i_1( \{ \langle y \rangle \}) =  \Phi^i_1( \{ \langle y' \rangle \} )$. Thus, $\Phi^i$ is a well-defined map.

We now show that $\Phi^i$ is surjective. Given $[x, y] \in H_n(\total^i)$, we know that $\partial y=0$, so $\{ \langle y \rangle \}$ is an element of $H_1(C_{\bullet} \cosheaf{F}^i_{n-1})$. If $\mathscr{B}=\{ \, \{ \langle b_1 \rangle \}, \dots, \{ \langle b_t \rangle \} \, \}$ is a basis of $H_1(C_{\bullet} \cosheaf{F}^i_{n-1})$, and $b_1^*, \dots, b_t^*$ are the coset representatives of the basis, then $\{ \langle y \rangle \}$ can be written as
\[ \{ \langle y \rangle \} = c_1 \{ \langle b_1^* \rangle \} + \cdots + c_t \{ \langle b_t^* \rangle \} \]
for some $c_1, \dots, c_t$. That is, there exists $q_n \in \bigoplus\limits_{e \in N_{\mathscr{V}}} C_n (\mathscr{R}^i_e)$ such that
\begin{equation}
\label{Rewrite}
c_1 b_1^*+ \cdots + c_t b_t^* - y = \partial q_n.
\end{equation}

Recall from Equation (\ref{KeyEq}) that
\begin{equation}
\label{BetaB}
\partial \Gamma^i \{ \langle b \rangle \} = e^i_{n-1} (b^*)
\end{equation}
for every $\{ \langle b \rangle \} \in \mathscr{B}$. Let
\begin{equation}
\label{RnDef}
 r_n =  x-(-1)^{n+1}( c_1 \Gamma^i \{ \langle b_1^* \rangle \} + \cdots + c_t \Gamma^i \{ \langle b_t^* \rangle \})+(-1)^{n+1}e^i_n (q_n).
 \end{equation}
We know that $\partial x = (-1)^{n-1} e^i_{n-1}(y)$, and, by commutativity of Diagram \ref{SS0}, 
we have $\partial e^i_n(q_n) = e^i_{n-1} (\partial q_n)$. Thus, it follows that
\begin{align*}
\partial r_n &=  \partial x - (-1)^{n+1} \partial ( c_1 \Gamma^i \{ \langle b_1^* \rangle \} + \cdots + c_t \Gamma^i \{ \langle b_t^* \rangle \}) +(-1)^{n+1} \partial e^i_n (q_n) \\
&= (-1)^{n-1} e^i_{n-1}(y) - (-1)^{n+1} e^i_{n-1} (c_1 b_1^* + \cdots + c_t b_t^*) + (-1)^{n+1} e^i_{n-1} ( \partial q_n) \\
&=0.
\end{align*}
The second equality follows from Equation 
(\ref{BetaB}) and Diagram \ref{SS0}.
The third equality follows from Equation (\ref{Rewrite}). Thus, $\{ \langle r_n \rangle \}$ represents an element of $H_0(C_{\bullet} \cosheaf{F}^i_n)$. Then,
\begin{align*}
\Phi^i( \{ \langle r_n \rangle \} ,\{ \langle y \rangle \}) &= \Phi^i_0 ( \{ \langle r_n \rangle \} ) + \Phi^i_1 ( \{ \langle y \rangle \} )\\
&= [r_n +(-1)^{n+1}( c_1 \Gamma^i \{ \langle b_1^* \rangle \} + \cdots + c_t \Gamma^i \{ \langle b_t^* \rangle \}), c_1 b_1^*+ \dots + c_t b_t^* ] \\
&=[x+(-1)^{n+1} e^i_n(q_n), y+ \partial q_n] \\
&=[x,y] + [(-1)^{n+1}e^i_n(q_n), \partial q_n] \\
&=[x,y]
\end{align*}
The third equality follows from Equations (\ref{Rewrite}) and (\ref{RnDef}). Thus, $\Phi^i$ is surjective.

Lastly, we show that $\Phi^i$ is injective. Let $( \{ \langle x \rangle \}, \{ \langle y \rangle \} ) \in H_0(C_{\bullet} \cosheaf{F}^i_n) \oplus H_1(C_{\bullet} \cosheaf{F}^i_{n-1})$. If $\mathscr{B}=\{ \, \{ \langle b_1 \rangle \}, \dots, \{ \langle b_t \rangle \} \, \}$ is a basis of $H_1(C_{\bullet} \cosheaf{F}^i_{n-1})$, and $b_1^*, \dots, b_t^*$ are the coset representatives of the basis, then $\{ \langle y \rangle \}$ can be written as
\[ \{ \langle y \rangle \} = c_1 \{ \langle b_1^* \rangle \} + \cdots + c_t \{ \langle b_t^* \rangle \} \]
for some $c_1, \dots, c_t$. Assume that
\[\Phi^i (\{ \langle x \rangle \}, \{ \langle y \rangle \} ) = [x+(-1)^{n+1}( c_1 \Gamma^i \{ \langle b_1^* \rangle \} + \cdots + c_t \Gamma^i \{ \langle b_t^* \rangle \}), c_1 b_1^*+ \dots + c_t b_t^*]=0.\]
Then, there exists $q_n \in \bigoplus\limits_{e \in N_{\mathscr{V}}} C_n(\mathscr{R}^i_e)$ and $p_{n+1} \in \bigoplus\limits_{v \in N_{\mathscr{V}}} C_{n+1}(\mathscr{R}^i_v)$ such that
\begin{equation}
\label{trivial1}
\partial q_n =c_1 b_1^*+ \dots + c_t b_t^*,
\end{equation}
\begin{equation}
\label{trivial2} 
\partial p_{n+1} + (-1)^{n-1}e^i_n (q_n)  =x+(-1)^{n+1}( c_1 \Gamma^i \{ \langle b_1^* \rangle \} + \cdots + c_t \Gamma^i \{ \langle b_t^* \rangle \}).\end{equation}

From Equation (\ref{trivial1}), we know $c_1 \{ \langle b_1^* \rangle \} + \dots + c_t \{ \langle b_t^* \rangle \} = \{ \langle y \rangle \}$ is trivial in $H_1(C_{\bullet} \cosheaf{F}^i_{n-1})$. Thus, $\Phi^i(\{ \langle x \rangle \} , \{ \langle y \rangle \}) = \Phi^i ( \{ \langle x \rangle \},0) = [x,0]$.

If $[x,0]$ is trivial in $H_n(\total^i)$, then there exists $a_n \in \bigoplus\limits_{e \in N_{\mathscr{V}}} C_n(\mathscr{R}^i_e)$ and $b_{n+1} \in \bigoplus\limits_{v \in N_{\mathscr{V}}} C_{n+1}(\mathscr{R}^i_v)$ such that
\[ \partial a_n =0, \]
\[ \partial b_{n+1} + (-1)^{n-1}e^i_n a_n  =x. \]
The above two equations imply that $\{ \langle x \rangle \}$ is trivial in $H_0(C_{\bullet} \cosheaf{F}^i_n)$ as well. Thus, $\Phi^i$ is injective.

\end{proof}

\end{appendices}

\bibliographystyle{abbr}

\begin{thebibliography}{10}

\bibitem{BKRClear14}
U.~Bauer, M.~Kerber, and J.~Reininghaus.
\newblock Clear and compress: computing persistent homology in chunks.
\newblock In {\em Topological Methods in Data Analysis and Visualization III},
  Mathematics and Visualization, pages 103--117. 2014.

\bibitem{BSConsistent19}
T.~Berry and T.~Sauer.
\newblock Consistent manifold representation for topological data analysis.
\newblock {\em Foundations of Data Science}, 1(1):1--38, 2019.

\bibitem{BredonSheaf97}
G.~Bredon.
\newblock {\em Sheaf Theory}.
\newblock Springer, 1997.

\bibitem{CarlssonTopology09}
G.~Carlsson.
\newblock Topology and data.
\newblock {\em Bull. Amer. Math. Soc. (N.S.)}, 46(2):255--308, 2009.

\bibitem{Carlssonshape13}
G.~Carlsson.
\newblock The shape of data.
\newblock In {\em Foundations of computational mathematics, {B}udapest 2011},
  volume 403 of {\em London Math. Soc. Lecture Note Ser.}, pages 16--44.
  Cambridge Univ. Press, Cambridge, 2013.

\bibitem{CSZigzag10}
G.~Carlsson and V.~de~Silva.
\newblock Zigzag persistence.
\newblock {\em Found. Comput. Math.}, 10(4):367--405, 2010.

\bibitem{CSZComputing10}
G.~Carlsson, G.~Singh, and A.~Zomorodian.
\newblock Computing multidimensional persistence.
\newblock {\em J. Comput. Geom.}, 1(1):72--100, 2010.

\bibitem{CZtheory09}
G.~Carlsson and A.~Zomorodian.
\newblock The theory of multidimensional persistence.
\newblock {\em Discrete Comput. Geom.}, 42(1):71--93, 2009.

\bibitem{CasasDistributing19}
A.~T. Casas.
\newblock Distributing persistent homology via spectral sequences.
\newblock arXiv:1907:05228, July 2019.

\bibitem{CSG+structure12}
F.~Chazal, V.~de~Silva, M.~Glisse, and S.~Oudot.
\newblock The structure and stability of persistence modules.
\newblock {\em Arxiv preprint {arXiv:1207.3674}}, 2012.

\bibitem{CEHStability07}
D.~Cohen-Steiner, H.~Edelsbrunner, and J.~Harer.
\newblock Stability of persistence diagrams.
\newblock {\em Discrete Comput. Geom.}, 37(1):103--120, 2007.

\bibitem{CurrySheaves14}
J.~Curry.
\newblock {\em Sheaves, Cosheaves and Applications}.
\newblock PhD thesis, University of Pennsylvania, 2014.

\bibitem{CGNDiscrete13}
J.~{Curry}, R.~{Ghrist}, and V.~{Nanda}.
\newblock {Discrete Morse} theory for computing cellular sheaf cohomology.
\newblock {\em Found. Comput. Math}, Dec. 2013.

\bibitem{EHComputational10}
H.~Edelsbrunner and J.~Harer.
\newblock {\em Computational Topology: an Introduction}.
\newblock American Mathematical Society, Providence, RI, 2010.

\bibitem{ELZTopological02}
H.~Edelsbrunner, D.~Letscher, and A.~Zomorodian.
\newblock Topological persistence and simplification.
\newblock {\em Discrete and Computational Geometry}, 28:511--533, 2002.

\bibitem{GabrielUnzerlegbare72}
P.~Gabriel.
\newblock Unzerlegbare {D}arstellungen {I}.
\newblock {\em Manuscripta Mathematica}, 6:71--103, 1972.

\bibitem{GhristBarcodes08}
R.~Ghrist.
\newblock Barcodes: the persistent topology of data.
\newblock {\em Bull. Amer. Math. Soc. (N.S.)}, 45(1):61--75, 2008.

\bibitem{HenselmanGhrist16}
G. ~Henselman and R. ~Ghrist.
\newblock Matroid Filtrations and Computational Persistent Homology
\newblock {\em Arxiv e-print {arXiv:1606.00199}}, 2016.

\bibitem{LMParallel15}
R.~Lewis and D.~Morozov.
\newblock Parallel computation of persistent homology using the blowup complex.
\newblock In {\em Proceedings of the 27th ACM Symposium on Parallelism in
  Algorithms and Architecture}, pages 323--331, 2015.

\bibitem{McClearyUsers01}
J.~McCleary.
\newblock {\em A User's Guide to Spectral Sequences}.
\newblock Cambridge Studies in Advanced Mathematics. Cambridge University
  Press, 2001.

\bibitem{NSWFinding08}
P.~Niyogi, S.~Smale, and S.~Weinberger.
\newblock Finding the homology of submanifolds with high confidence from random
  samples.
\newblock {\em Discrete \& Computational Geometry}, 39:419--441, 2008.

\bibitem{OPT+roadmap}
N.~Otter, M.~Porter, U.~Tillmann, P.~Grindrod, and H.~Harrington.
\newblock A roadmap for the computation of persistent homology.
\newblock {\em EPJ Data Science}, 6(17), 2017.

\bibitem{ShepardCellular85}
A.~Shepard.
\newblock {\em A Cellular Description of the Derived Category of a Stratified
  Space}.
\newblock PhD thesis, Brown University, May 1985.

\bibitem{YoonCellular18}
H.~R. Yoon.
\newblock {\em Cellular Sheaves and Cosheaves for Distributed Topological Data
  Analysis}.
\newblock PhD thesis, University of Pennsylvania, 2018.

\bibitem{ZCComputing05}
A.~Zomorodian and G.~Carlsson.
\newblock Computing persistent homology.
\newblock {\em Discrete Comput. Geom.}, 33(2):249--274, 2005.

\end{thebibliography}

\end{document}